\documentclass{amsart}

\usepackage{amsmath, amsthm, amssymb, tikz, xypic}

\newtheorem{thm}{Theorem}[section]

\newtheorem{lem}[thm]{Lemma}
\newtheorem{cor}[thm]{Corollary}
\newtheorem{prop}[thm]{Proposition}

\theoremstyle{definition}
\newtheorem{dfn}[thm]{Definition}
\newtheorem{prb}[thm]{Problem}
\theoremstyle{remark}

%Fancy Variables

\newcommand{\im}{\operatorname{im}}

%Classical Structures

\newcommand{\C}{\mathbb{C}}

\newcommand{\N}{\mathbb{N}}
\newcommand{\Z}{\mathbb{Z}}

%Operations
\newcommand{\bb}[1]{\mathbb{{#1}}}
\newcommand{\inv}{^{-1}}
 %powerset
 %concatenationsign
 %setwise 1 choose 2
 %discrete interval
\newcommand{\ip}[1]{\langle {#1} \rangle} %inner product
 %pfister form
  %big inner product
 %ceiling/godel code
\newcommand{\sm}{\smallsetminus}
 
 %boldface hierarchy pi
 %sigma
 % and delta

\newcommand{\tr}{\operatorname{tr}}

\newcommand{\dom}{\operatorname{dom}}

\newcommand{\aut}{\operatorname{Aut}}

\newcommand{\res}{\upharpoonright}

\newcommand{\supp}{\operatorname{supp}}
\newcommand{\cod}{\operatorname{cod}}
\newcommand{\var}{\operatorname{var}}
\newcommand{\cov}{\operatorname{cov}}

%Arrows

\newcommand{\lra}{\Leftrightarrow}

%Style
 %redify
\newcommand{\s}[1]{\mathcal{{#1}}} %caligraphy

\renewcommand{\deg}{\operatorname{deg}}

\renewcommand{\bf}{\mathbf}
\newcommand{\ax}{\mathsf{a}}
\newcommand{\bx}{\mathsf{b}}

\newcommand{\fix}{\operatorname{fix}}
\newcommand{\ob}{\operatorname{ob}}

\title{Limits of sparse hypergraphs}
\author{Riley Thornton}
\begin{document}

\begin{abstract}
    We generalize ultraproducts and local-global limits of graphs to hypergraphs and other structures. We show that the local statistics of an ultraproduct of a sequence of hypergraphs are the ultralimits of the local statistics  of the hypergraphs. Using some standard results from model theory, we conclude that the space of (equivalence classes of) pmp hypergraphs with the topology of local-global convergence is compact, and that any countable set of local statistics for a pmp hypergraph can be realized as the statistics of a set of labellings (rather than just approximated) in a local-global equivalent hypergraph.

    We give two applications. First, we characterize those structures where any solution to the corresponding CSP can be turned into a measurable solution. These turn out to be the width-1 structures. We can also use the limit machinery to extract from this theorem a purely finitary characterizations of width-1 structures involving asymptotic solutions.

    Second, we prove two measurable versions of the Frankl--R\"odl matching theorem using measurable nibble and differential equation arguments. The measurable proofs are much softer than the purely finitary results. And, we can recover the finitary theorems using the limit machinery.
\end{abstract}
\maketitle

\section{Introduction}

This paper extends the theory of local-global limits and ultraproducts to hypergraphs and more general structures. Unlike the theory of dense hypergraph limits, this turns out to be very straightforward. Later sections of the paper also give a handful of illustrative (and hopefully compelling) applications. Roughly, the limit theory gives a dictionary between measurable labellings of limit objects and statistics of labelled finite neighborhoods in the limiting objects. 

For example, an earlier theorem of the author characterizes trivial Borel CSPs. Underlying that result was a construction of acyclic Borel hypergraphs with large Borel chromatic number. The limit theory carries information about parameters like the probability a constraint is violated and the girth of hypergraphs. So, using the limit theory we can extend the theorem in two ways. First, using some recent results about independence numbers of random regular hypergraphs, we construct acyclic pmp hypergraphs with high measurable chromatic number. So, we get a characterization of trivial measurable CSPs. Second, we can translate this measurable theorem into a purely finitary theorem to the effect that for width-1 CSPs (and only for width-1 CSPs) there is a link between the size of the smallest unsolvable subinstance and the proportion of constraints a labelling has to violate. 

As another example, the limit theory can simplify arguments which involve the differential equation or nibble methods. In broad strokes, there are many arguments in probabilistic combinatorics which involve showing that a small random change in a labelling, or step, has a predictable effect in expectation. Then, some technical estimates allow one to find an actual trajectory, or sequence of steps, with the desired destination. In the limit, though, many of these technical estimates disappear. Regularity assumptions can be boosted, and the expected change is a literal change in measures. To illustrate this point, we re-prove the celebrated Frankl--R\"odl approximate matching theorem and give some numerical bounds on the density of a matching in the large girth case.

\subsection{Background}

We are generally interested in finding measurable solutions to combinatorial problems (about graphs, hypergraphs, equivalence relations, etc.)~where the vertex set forms a measure space. For instance, if I have a pmp action of $F_n$, what's the largest measurable independent set I can find in its Schreier graph? How many colors do I need to measurably color the Schreier graph? These kinds of problems arise naturally in ergodic theory. For graphs of bounded degree and problems where an arbitrarily small error is acceptable (like finding large independent sets), there is a well-established connection between measurable problems in the limit with asymptotic problems about sequences of graphs. 

Let's briefly recall this established connection for sparse graphs. There are a few different approaches. This paper will generalize the model theoretic and combinatorial approaches.

One can view a finite graph $G=(V,E)$ as a continuous model theoretic structure by instead looking at $\s P(V)$ as a Boolean algebra equipped with counting measure and the full group of involutions contained in $E$. The Boolean algebra with measure turns out to be interdefinable with $L^2(V)$ as a Hilbert space equipped also with pointwise multiplication and complex conjugation. So, the model theory of graphs viewed in this way is concerned with the statistics of labellings of the graph. For instance, the independence ratio and matching ratio will be continuous-first-order definable. And, ultralimits of graphs look like measure algebras (or equivalently tracial von Neumann algebras) equipped with involutions. We can find point realizations of the ultralimits, which means we can identify them with Schreier graphs of certain nonstandard measure preserving group actions. (There are some subtleties elided here, of course. For instance, there isn't a natural way to enumerate the involutions generating the edge set.)

In combinatorics, we can put a metric on graphs by declaring two graphs to be close if the empirical distribution of isomorphism types of colored radius $r$ neighborhoods in any labelling of one graph can be well-approximated by a labelling of the other. So, if two graphs are close, their independence ratios, matching ratios, and so on will all be close. (This is the colored neighborhood metric of Bollobas and Riordan). Hatami, Lovasz, and Szegedy noticed that this definition can also be applied to graphs on measure spaces (though one only ends up with a pseudometric), and that the space of pmp graphs of a bounded degree is complete (indeed, compact) with respect to this metric. Limits in this metric are called local-global limits.

These two approaches are really the same: two pmp graphs are at distance 0 if and only their continuous models are elementary equivalent, and any ultralimit of a sequence of pmp graphs of bounded degree will be equivalent to a local-global limit of some subsequence. This is spelled out in \cite{LocalGlobalBackground}. In fact this gives another proof of the Hatami--Lovasz--Szegedy theorem. And, in either setting, one can use standard compactness arguments to transfer results between sequences of finite graphs and pmp graphs. For instance, one can show that any $d$-regular acyclic pmp graph has an independent set of density $\alpha_d=\frac{\log(d)}{d}\bigl(1+o(d)\bigr)$. It then follows by compactness that, for $\epsilon>0$, any $d$, and any large enough $g$, any $d$-regular graph with girth $g$ has an independent set of density $\alpha_d-\epsilon$.

\subsection{Statement of results}

In the first section of this paper, we will adapt the theory above to the setting of hypergraphs. Unlike the theory of dense graph limits, this generalization turns out to be very straightforward. For the most part it consists of changing some $2$'s to $u$'s. 

A pmp hypergraph is a uniform hypergraph on a standard probability space whose connectedness relation is pmp. The (spectrum of) continuous model(s) of a pmp hypergraph is defined analogously as for graphs: pick some pmp group action which generates edge set and consider $L_2(V,\mu)$ equipped with the associated automorphisms. As with graphs, if we fix a degree bound, then we can always use the same group (in this case a free power $(\Z/u\Z)$, where $u$ is the uniformity of the hypergraph). The local statistics of a pmp hypergraph are also defined exactly analogously to those of a pmp graph, as are the attendant notions of local-global limits and local-global containment. And, we have completely analogous theorems to those in the graph setting:

\begin{thm}
    For a sequence of pmp $u$-uniform hypergraphs $\ip{\s H_i: i\in\N}$ all of degree at most $\Delta$, and pmp hypergraphs $\s H$ and $\s G$, the following are equivalent:
    \begin{itemize}
        \item $\ip{\s H_i:i\in\N}$ local-global converges to $\s H$ 
        \item There is a sequence of markings $\ip{\mathsf{a}_i: i\in\N}$ which elementary converges to some $\mathsf{a}$ whose Schreier hypergraph is local-global equivalent to $\s H$.
    \end{itemize} 

    As are the following containment notions:
    \begin{itemize}
        \item $\s H$ local-global contains $\s G$
        \item There is $\s H'$ local-global equivalent to $\s H$, so that every marking of $\s G$ is weakly contained in a marking of $\s H'$
        \item There is $\s H'$ local-global equivalent to $\s H$ and some marking of $\s G$ is contained in a marking of $\s H'$
    \end{itemize}
    
    \end{thm}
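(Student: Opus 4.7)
The plan is to follow the graph argument of \cite{LocalGlobalBackground} essentially verbatim, substituting $u$ for $2$ wherever involutions occurred. I would first fix a group $\Gamma = (\Z/u\Z)^{*N}$ with $N$ depending only on $u$ and the degree bound $\Delta$, chosen large enough that every pmp $u$-uniform hypergraph of degree at most $\Delta$ arises as the Schreier hypergraph of some pmp $\Gamma$-action; such an action is a \emph{marking}, and its existence reduces to a measurable proper edge coloring whose color classes are collections of vertex-disjoint hyperedges, handled by the usual measurable coloring argument for bounded-degree hypergraphs. Each marking is coded as a continuous first-order structure on $L^2(V,\mu)$ with the Hilbert-space operations, pointwise multiplication, complex conjugation, and the unitary $\Gamma$-action induced by $\ax$; ``elementary convergence'' means continuous-logic convergence in this language.

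The technical heart of the argument, flagged in the abstract, is that local statistics commute with ultraproducts: the frequency of a given colored radius-$r$ neighborhood isomorphism type in the Schreier hypergraph of a marking $\ax$ is continuous first-order definable (the shape of a neighborhood is traced by words in the generators, the coloring is recorded by characteristic functions, and the frequency is the integral of the resulting indicator), and conversely every continuous sentence can be uniformly approximated by such data. Consequently markings are elementarily equivalent iff their Schreier hypergraphs are local-global equivalent, and the theory of a continuous-logic ultraproduct of markings is the ultralimit of the component theories, point-realizable as a marking of a pmp hypergraph. Verifying that the point-realization genuinely produces a pmp hypergraph of the correct degree (and not a more general continuous-logic object) is the main obstacle, and it goes through exactly as in the graph case by applying the separable-model theorem inside the bounded-degree class.

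With this dictionary in place, the first equivalence is direct. For $(2)\Rightarrow(1)$, elementary convergence $\ax_i\to\ax$ forces every local statistic to converge, so the Schreier hypergraph of $\ax$ carries the limiting statistics and is local-global equivalent to $\s H$. For $(1)\Rightarrow(2)$, pick arbitrary markings $\ax_i$ of $\s H_i$; compactness of the space of continuous $\Gamma$-theories of bounded-degree markings extracts a subsequence whose theories converge, the limit theory point-realizes to some $\ax$, and by the dictionary its Schreier hypergraph must be local-global equivalent to $\s H$. The full sequence converges (rather than just a subsequence) because local-global convergence pins down a unique equivalence class of limit, so every convergent sub-subsequence produces an elementarily equivalent $\ax$.

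The containment equivalences work the same way. The implications $(3)\Rightarrow(1)$ and $(2)\Rightarrow(1)$ are immediate since both containment and weak containment of markings preserve local statistics, and therefore transport the statistics of $\s G$ into $\s H'$ (hence into $\s H$). For the hard direction $(1)\Rightarrow(3)$ --- which subsumes $(1)\Rightarrow(2)$ --- take any marking $\ax$ of $\s H$, form its $\aleph_1$-saturated continuous-logic ultrapower $\ax^*$, and use the dictionary to translate local-global containment into the statement that every finite fragment of the type of any marking of $\s G$ is realized in $\ax^*$; saturation then upgrades each such finite fragment to an actual embedding of the marking of $\s G$ into $\ax^*$, and the point-realization of $\ax^*$ is the required $\s H'$.
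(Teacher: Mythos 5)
Your overall route --- encode markings as continuous structures on $L^2(X,\mu)$, prove a \L o\'s-type theorem saying local statistics commute with ultraproducts, then use saturation and point realization --- is the same as the paper's. But there is a genuine gap: your ``dictionary'' asserts that markings are elementarily equivalent iff their Schreier hypergraphs are local-global equivalent, and the ``if'' direction is false. Hypergraph-level statistics forget the diagram (which generator traverses which edge, and how); the paper flags exactly this point: all $2$-regular acyclic pmp graphs are local-global equivalent, yet some arise from $\Z$-actions and some do not. Indeed two markings of the \emph{same} hypergraph can already have different theories (mark a perfect matching entirely by $\gamma_1$, or half by $\gamma_1$ and half by $\gamma_2$: the trace of $\bf 1_{\fix(\gamma_1)}$ differs). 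This breaks your passage from a convergent subsequence to the full sequence in the convergence direction: with arbitrarily chosen markings $\ax_i$ of $\s H_i$, even a constant hypergraph sequence can carry markings whose diagram statistics oscillate, so ``every convergent sub-subsequence produces an elementarily equivalent $\ax$'' is not true. You must \emph{choose} the markings coherently (e.g., approximate the diagram of a fixed marking of a limiting hypergraph by labellings of $\s H_i$ and correct them to genuine markings), which is additional work your sketch does not contain.

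The same conflation affects the containment direction. From $\s G\preccurlyeq_{lg}\s H$ plus saturation you do not get an action-equivariant embedding of a marking $\bx$ of $\s G$ into $\ax^{\s U}$; that would amount to weak containment of the \emph{actions}, which does not follow from containment of the hypergraphs. What the hypothesis gives is that the statistics of the diagram of $\bx$, viewed as an edge labelling of $\s G$, lie in the local statistics of $\s H$; saturation then produces inside $M(\ax^{\s U})$ a code $c$ for a \emph{new} marking $\mathsf{c}$ of the Schreier hypergraph, together with realizations of a countable dense family of labellings of $\bx$ whose joint statistics with $c$ match those with the diagram of $\bx$ --- i.e., an embedding equivariant with respect to $\mathsf{c}$, not with respect to $\ax^{\s U}$ itself. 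This is exactly why the statement quantifies over an equivalent copy $\s H'$ rather than keeping $\s H$ and its given marking. Finally, ``the point-realization of $\ax^*$ is the required $\s H'$'' is not right as stated: the ultrapower is non-separable, so one still needs the downward L\"owenheim--Skolem step (Theorem \ref{prop:fakelowenheimskolem}) to pass to a separable elementary submodel containing $c$ and the embedded copy of $M(\bx)$, and to point-realize \emph{that} in order to obtain a pmp hypergraph on a standard probability space which is local-global equivalent to $\s H$.
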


And, as a corollary, we have closure and continuity properties for the space of (equivalence classes of) pmp graphs with the topology of local-global convergence.

\begin{thm}
    If each $\s H_i$ is a $u$-uniform hypergraph of degree at most $d$, and $\ip{\s H_i:i\in\N}$ is Cauchy sequence in the local-global metric, $\ip{\s H_i:i\in\N}$ local-global converges to some pmp hypergraph.
\end{thm}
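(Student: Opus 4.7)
The plan is to extract a convergent subsequence using the ultraproduct machinery of the previous theorem and then use the Cauchy property to upgrade subsequential convergence to full convergence. This is a standard strategy, and most of the real work is already packaged into the previous theorem.

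First, I would fix a non-principal ultrafilter $\mathcal{U}$ on $\N$. Using the uniform degree bound $d$, I can choose a Schreier marking $\mathsf{a}_i$ of each $\s H_i$ by a single fixed finitely generated group (a suitable free power of $\Z/u\Z$). The ultraproduct $\mathsf{a} = \prod_{\mathcal{U}} \mathsf{a}_i$ is then a pmp marking of a $u$-uniform hypergraph $\s H$ of degree at most $d$, and by the first part of the previous theorem, $\ip{\mathsf{a}_i:i\in\N}$ elementary converges along $\mathcal{U}$ to $\mathsf{a}$, so $\ip{\s H_i:i\in\N}$ admits $\s H$ as a subsequential local-global limit along $\mathcal{U}$.

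Now, in any metric space, a Cauchy sequence with a convergent subsequence converges to the same limit as that subsequence. So the Cauchy hypothesis forces the full sequence $\ip{\s H_i:i\in\N}$ to local-global converge to $\s H$.

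The only place one might worry is the step where we identify a subsequential ultralimit with a genuine local-global limit: the local-global metric measures Hausdorff distance between sets of achievable empirical colored-neighborhood distributions, so we must know that the ultraproduct realizes the entire limit set rather than just a single limit point per radius and color palette. But this is precisely the content of the equivalence between local-global convergence and elementary convergence of markings in the previous theorem, so once that is in hand the present theorem is a formal consequence. Consequently, the main obstacle is really only a bookkeeping one, namely choosing the markings $\mathsf{a}_i$ uniformly enough that their ultraproduct lives in a single fixed model-theoretic space, which is handled by the degree bound $d$.
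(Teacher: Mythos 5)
Your outline follows the paper's own route: mark each $\s H_i$ by a fixed free power of $\Z/u\Z$ using the degree bound, pass to an ultraproduct of the markings, use the \L o\'s-type statement (Theorem \ref{prop:fakelos}) to identify the limiting statistics $\lim_{\s U}\s L_{rk}(\ax_i)$ with $\s L_{rk}(\lim_{\s U}\ax_i)$, and then use the Cauchy hypothesis to upgrade subsequential (or ultrafilter) convergence to convergence of the full sequence. Your final remark about the Hausdorff metric is also on target: realizing the whole limit set, not just one measure per $(r,k)$, is exactly what Theorem \ref{prop:fakelos} provides.

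However, there is a genuine gap at the point where you declare that the ultraproduct $\mathsf{a}=\prod_{\s U}\mathsf{a}_i$ is ``a pmp marking of a $u$-uniform hypergraph $\s H$'' and take $\s H$ as the limit. The ultraproduct action lives on a nonstandard, non-separable measure algebra, so its Schreier hypergraph is \emph{not} a pmp hypergraph in the sense of this paper (pmp relations are defined over standard probability spaces), and the theorem's conclusion requires an honest standard pmp hypergraph --- indeed that is the substantive content of this completeness statement. The missing step is the downward L\"owenheim--Skolem extraction (Theorem \ref{prop:fakelowenheimskolem}): from the ultraproduct one must pass to a standard pmp action $\ax$ with $\s L_{rk}(\ax)=\s L_{rk}(\lim_{\s U}\ax_i)$ for all $r,k$, and take its Schreier hypergraph as the limit. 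Appealing to the marking/convergence equivalence of the earlier theorem does not fill this in, since that equivalence is stated for a given standard $\s H$; producing the standard representative from the nonstandard limit is precisely what the L\"owenheim--Skolem step does. With that step inserted, the rest of your argument (Cauchy plus a convergent subsequence, or convergence along a nonprincipal ultrafilter, forces full convergence) is fine and matches the paper.
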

The topology of local-global convergence is inherited from a compact metrizable topology, so this version of completeness implies compactness.

\begin{thm}
    If $\phi$ is some hypergraph parameter that can be detected by a formula in $L^2(V,\mu)$, then $\phi$ is continuous with respect to local-global convergence.
\end{thm}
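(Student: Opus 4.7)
The plan is to reduce to the first theorem of this section, which translates local-global convergence of hypergraphs into elementary convergence of the associated continuous model theoretic structures, and then invoke the standard fact from continuous logic that sentence values are continuous under elementary convergence.

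Concretely, suppose $\ip{\s H_i : i \in \N}$ local-global converges to $\s H$. Applying the first theorem above, after passing to a subsequence we obtain markings $\mathsf{a}_i$ of $\s H_i$ that elementary converge to some $\mathsf{a}$ whose Schreier hypergraph is local-global equivalent to $\s H$. Elementary convergence in continuous logic means precisely that for every sentence $\psi$ of the language, $\psi^{\mathsf{a}_i} \to \psi^{\mathsf{a}}$. So, provided the formula detecting $\phi$ can be packaged as a sentence in the $L^2$ structure of a marking, we get $\phi(\mathsf{a}_i) \to \phi(\mathsf{a})$. Since $\phi$ is a hypergraph parameter and so does not depend on the choice of marking, $\phi(\mathsf{a}_i) = \phi(\s H_i)$ and $\phi(\mathsf{a}) = \phi(\s H)$, which gives the convergence along the chosen subsequence. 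Running the same argument on arbitrary subsequences of $\ip{\s H_i}$ and using that the limit is uniquely $\phi(\s H)$, a standard subsequence argument upgrades this to full-sequence convergence.

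The main obstacle is making precise what ``$\phi$ is detected by a formula in $L^2(V,\mu)$'' should mean, and in particular why the formula's value is independent of the choice of marking representing the hypergraph. One natural reading is that $\phi$ is the value of a sentence that is invariant across all markings of a given pmp hypergraph, equivalently a sentence that factors through local-global equivalence. Another reading takes $\phi$ as an infimum or supremum over markings of a continuous-logic formula, in which case the method above yields only semi-continuity. In either case the actual argument is short: the real content has already been packed into the equivalence theorem, and this statement is essentially a corollary of the interplay between elementary convergence in continuous logic and local-global convergence of hypergraphs.
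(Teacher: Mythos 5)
Your proposal is correct and takes essentially the same route as the paper: the paper proves this theorem only as a remark following its marking machinery, observing that any parameter expressible in terms of the operations of $M(\ax)$ (more generally, any continuous first-order sentence) is continuous under local-global convergence, which is precisely your reduction to elementary convergence of markings. Your explicit attention to marking-independence (and invariance under local-global equivalence of the Schreier hypergraph of the limit marking), together with the subsequence upgrade, just fills in details the paper leaves implicit.
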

So, for instance, the spectral radius of a hypergraph is continuous.

Important in the theory of local-global containment is the existence of a minimal element (with a given local geometry). This goes through for hypergraphs as well

\begin{thm}
    If $\s H$ is an atomless pmp hypergraph of bounded degree with local geometry $\mu_0$, then $\s H$ local-global contains the Bernoulli hypergraphing $\s B(\mu_0)$. %In fact, any pmp hypergraph is local-global equivalent to its Bernoulli lift. 
\end{thm}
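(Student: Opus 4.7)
The plan is to invoke the equivalent characterization of local-global containment from the preceding theorem: it suffices to exhibit a pmp hypergraph $\s H'$ local-global equivalent to $\s H$ such that every marking of $\s B(\mu_0)$ is weakly contained in some marking of $\s H'$. This is the hypergraph analogue of a well-known fact about Bernoulli graphings, and the proof should go through essentially unchanged once we have the limit machinery of the previous section.

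First I would construct $\s H'$ by adjoining an IID sequence of $[0,1]$-valued labels to each vertex of $\s H$. Concretely, take the vertex space of $\s H'$ to be $V(\s H) \times [0,1]^{\N}$ with the product measure, and pull the edge structure back along the projection to $V(\s H)$. Since the labels are independent of the combinatorial structure, $\s H'$ has local geometry $\mu_0$ and is local-global equivalent to $\s H$; the atomlessness of $\s H$ is precisely what guarantees that this IID extension exists as a measure-preserving construction on a standard probability space.

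Next, by construction the vertex space of $\s B(\mu_0)$ is the space of $\mu_0$-distributed rooted hypergraphs equipped with IID $[0,1]$ labels on top, so any marking $c\colon V(\s B(\mu_0)) \to K$ (with $K$ finite) is a measurable function of the rooted structure and the labels. By a martingale convergence argument, $c$ is an $L^1$-limit of markings $c_r$ depending only on the $r$-ball around the root and the labels within that ball. Each $c_r$ transfers canonically to a marking $\tilde c_r$ of $\s H'$ by evaluating the same function at every vertex using its $r$-ball and the labels on that ball in $\s H'$. The joint distribution of $(r$-ball, labels on $r$-ball$)$ in $\s H'$ equals that in $\s B(\mu_0)$ by definition of local geometry, so $\tilde c_r$ and $c_r$ have identical local statistics. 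Letting $r \to \infty$ then exhibits $c$ as a weak limit of markings realized exactly on $\s H'$.

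The main obstacle is justifying that the $L^1$-approximation $c_r \to c$ on $\s B(\mu_0)$ translates to local-global convergence, i.e., that for every fixed $s$ the empirical distribution of colored $s$-neighborhoods under $c_r$ converges to that under $c$. The bounded-degree hypothesis keeps the number of vertices in any $s$-ball (and hence the number of possible colored neighborhood types) finite, so a routine union-bound argument suffices. A minor secondary point is choosing representatives for the isomorphism classes parametrizing $\s B(\mu_0)$ measurably, so that the transfer of $c_r$ from $\s B(\mu_0)$ to $\s H'$ is genuinely a measurable marking; this is standard for pmp structures of bounded degree.
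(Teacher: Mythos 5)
There is a genuine gap, and it sits exactly where the content of the theorem lives. Your argument is: form $\s H' = \s H \times [0,1]^{\N}$ with iid labels, assert $\s H' \equiv_{lg} \s H$, approximate any labelling of $\s B(\mu_0)$ by a rule depending on a finite ball and its labels, and transfer that rule to $\s H'$. The last two steps are fine (they correspond to the paper's Lusin/Tietze approximation lemma and to its computation that plugging a local rule into iid data on a hypergraph with local geometry $\mu_0$ reproduces the statistics on $\s B(\mu_0)$). But the assertion that $\s H'$ is local-global equivalent to $\s H$ ``since the labels are independent of the combinatorial structure'' is not a proof, and it is essentially the whole theorem in disguise: independence gives you the easy facts that $\s H'$ has local geometry $\mu_0$ and that $\s H \preccurlyeq_{lg} \s H'$, but the direction $\s H' \preccurlyeq_{lg} \s H$ says that every measurable labelling of $\s H'$ --- which may use the adjoined iid randomness --- can be approximately simulated by a measurable labelling of $\s H$ alone. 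Since $\s H'$ trivially local-global contains $\s B(\mu_0)$ once your transfer step is in place, claiming $\s H' \equiv_{lg} \s H$ without argument makes the proof circular. The paper fills exactly this hole with a separate lemma: take a Borel coloring of $\s H$ in which vertices within distance $2r$ get distinct colors and every color class has small measure, randomly merge the colors into $k$ classes, and run a second-moment/Chebyshev argument to show the resulting labelling has statistics close to the iid $k$-labelling over $\mu_0$.

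Relatedly, your placement of the atomlessness hypothesis is a symptom of the same gap: atomlessness is not needed for the iid extension to exist (product measures always exist); it is needed to simulate iid labels \emph{on} $\s H$, i.e.~to split $\s H$ into color classes of arbitrarily small measure. Indeed, for a finite hypergraph $\s H$ (all atoms), your $\s H'$ is genuinely not local-global equivalent to $\s H$, so the asserted equivalence cannot be soft. Two smaller points: you use ``marking'' for a finite vertex labelling, whereas in this paper a marking is a generating group action, so the appeal to the containment-via-markings theorem and to weak containment is both a terminological mismatch and an unnecessary detour --- once you are comparing statistics of labellings you can work directly with the definition of $\preccurlyeq_{lg}$; and if you did route the proof through weak containment of actions you would need an Ab\'ert--Weiss/Tucker--Drob type input for actions with prescribed stabilizers, which this paper does not develop and which would have to be proved by the same kind of simulation argument you skipped.
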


In the next two sections, we give two applications of this machinery. The first is to constraint satisfaction problems. For a finite relational structure $\s D$, the associated constraint satisfaction problem, or CSP, is the following: given a structure $\s X$ with the same signature (called an instance of $\s D$), determine whether or not there is a homomorphism from $\s X$ to $\s D$ (called a solution to $\s X$). The class of CSPs (as $\s D$ varies) includes many classical problems: $k$SAT, graph $n$-coloring, systems of linear equations over a finite field, etc.  Building on previous work of the author, we can characterize which measurable CSPs are no more complicated than their classical counterparts.

\begin{thm}
    For a finite relational structure $\s D$, the following are equivalent:
    \begin{enumerate}
        \item $\s D$ is width-1
        \item Any pmp instance of $\s D$ with a solution has a measurable solution
        \item For any bounded degree sequence of instances of $\s D$, $\ip{\s X_i: i\in\N}$, either we can eventually satisfy all but an $\epsilon$ fraction of the constraints on $\s X_i$, or for some $n$ there are arbitrarily large $i$ so that $\s X_i$ has a sub-instance of size at most $n$ with no solution.
    \end{enumerate}
\end{thm}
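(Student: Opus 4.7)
The plan is to prove (1) $\Leftrightarrow$ (2) as a measurable width-1 dichotomy, generalizing the author's earlier Borel result, and then translate between (2) and (3) using the local-global limit machinery of the preceding sections. The main technical hurdle is expected to be supplying the pmp witnesses to the failure of (2) when $\s D$ is not width-1; the other implications should be essentially formal once the right limit objects are on the table.

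For (1) $\Rightarrow$ (2) I would simply run the arc-consistency algorithm measurably. Given a width-1 $\s D$ and a pmp instance $\s X$ with a solution, iteratively prune from each variable's domain every value lacking a witness in some incident constraint; this is a monotone Borel process converging to a stable Borel field of nonempty subsets of $D$. The width-1 hypothesis ensures that any pointwise selector from the reduction is a solution, so a Borel selection produces a measurable solution. For the converse (2) $\Rightarrow$ (1) I would argue by contrapositive: adapting the author's earlier Borel construction, for non-width-1 $\s D$ one builds finite high-girth instances whose minimum fraction of violated constraints over all labellings is bounded below by a fixed $\epsilon > 0$ (using the random regular hypergraph estimates alluded to in the introduction), and then passes to a pmp local-global limit $\s X$. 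High girth makes $\s X$ acyclic, so it admits a non-measurable set-theoretic solution by arc-consistency plus choice, while the persistent violation gap prevents any measurable solution.

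For (2) $\Rightarrow$ (3), suppose (3) fails: fix $\epsilon > 0$ and a subsequence $\s X_{i_k}$ on which every labelling misses at least an $\epsilon$-fraction of the constraints, yet for every $n$ all sub-instances of $\s X_{i_k}$ of size at most $n$ are solvable once $k$ is large. By compactness of the local-global topology (an earlier theorem) a further subsequence converges to a pmp instance $\s X$; since the minimum violation rate is a local-global continuous parameter, $\s X$ has no measurable labelling with violation below $\epsilon$, and in particular no measurable solution. By (2), $\s X$ has no solution at all, so by the usual compactness theorem for homomorphisms into a finite relational structure some finite sub-instance $\s Y \subseteq \s X$ is unsolvable; but $\s Y$ then appears in $\s X_{i_k}$ with positive local density for large $k$, contradicting local solvability. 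The reverse direction (3) $\Rightarrow$ (2) is dual: a pmp instance witnessing the failure of (2), with a positive minimum measurable violation supplied by lower semicontinuity on the space of measurable labellings, is a local-global limit of finite instances that inherit both the violation gap and the local solvability required to violate (3).
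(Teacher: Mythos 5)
Your architecture for (1)$\Leftrightarrow$(2) is essentially the paper's (measurable arc consistency for one direction; a random regular hypergraph gadget plus a local-global limit for the other), but the way you close the loop with (3) has a genuine gap. For (3)$\Rightarrow$(2) you take an abstract pmp instance witnessing the failure of (2) and assert it ``is a local-global limit of finite instances'' inheriting the violation gap and local solvability. Nothing guarantees this: whether a given pmp structure is a local-global limit of finite structures is an Aldous--Lyons-type realization problem, and the paper explicitly lists the hypergraph case as open (whether regular acyclic pmp hypergraphs are limits of finite hypergraphs). In addition, ``a positive minimum measurable violation supplied by lower semicontinuity'' does not follow from the nonexistence of a measurable solution: the solution density can equal $1$ without being attained, which is exactly the phenomenon the exact-realization theorem (Theorem \ref{thm:swapgraph}) exists to handle, and even that only gives attainment after passing to an equivalent structure. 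The paper avoids both problems by never converting a pmp counterexample into finite instances: the failure of (3) when $\s D$ is not width-1 is witnessed directly by the same finite random regular hypergraph instances built from the gadget relation $R$ of Lemma \ref{lem:acyclicdoodad} --- high girth makes all small subinstances acyclic, hence solvable, so obstruction numbers tend to infinity, while the independence-ratio bound forces every labelling to violate a fixed fraction of constraints. Your own argument for $\neg(1)\Rightarrow\neg(2)$ already produces exactly such a finite sequence, so the repair is to re-route: prove (3)$\Rightarrow$(1) by contraposition with those finite instances, rather than (3)$\Rightarrow$(2) from an abstract pmp witness.

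Two further points. In (1)$\Rightarrow$(2), the claim that ``any pointwise selector from the reduction is a solution'' is false: with $D=\{0,1\}$ and the implication relation $\{(0,0),(0,1),(1,1)\}$ (a width-1 template), the instance with constraints $R(x,y)$ and $R(y,x)$ has stable arc-consistent domains $\{0,1\}$ at both variables, yet the selector $x\mapsto 1$, $y\mapsto 0$ is not a solution; one must apply the set polymorphism witnessing width 1 to the stable domains (or simply quote the earlier Borel theorem, as the paper does). Relatedly, the solvability of your acyclic limit is not ``arc-consistency plus choice'' for a generic gadget --- it depends on the specific relation $R$ whose acyclic instances are always solvable. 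Finally, in (2)$\Rightarrow$(3) the step ``$\s Y$ then appears in $\s X_{i_k}$ with positive local density'' is not justified: an unsolvable finite subinstance of the limit can sit inside a null set and is invisible to local statistics. The fix is to observe that, for each $n$, the measure of points whose radius-$n$ ball contains an unsolvable subinstance of size at most $n$ is a local statistic converging to $0$, and to discard the null (invariant) saturation of these bad sets from the limit before applying compactness for homomorphisms; the paper is itself terse here, but the issue lives in the choice of representative of the limit, not in densities of individual finite configurations.
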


And lastly, we prove a measure theoretic generalization of celebrated Frankl--R\"odl matching theorem:

\begin{thm}
    For any $u\geq 2$, $\epsilon>0$, large enough $\Delta$, small enough $\delta$, and any $\lambda$, if $\s H$ satisfies
    \begin{enumerate}
        \item $\bb P(|\deg(v)-\Delta|<\delta\Delta)<\delta$ (almost regularity)
        \item $\bb P((\exists u)\;\cod(u,v)>\delta \Delta)<\delta$ (small codegree)
        \item $\bb P(\deg(v)>\lambda)=0$ (bounded degree)
    \end{enumerate}
    Then $\s H$ has a matching that covers a $(1-\epsilon)$-fraction of vertices.

    Further, if $\s H$ is $\Delta$-regular and acyclic, then $\s H$ has a matching covering a set of vertices of measure
    \[1-\left(\frac{1}{(u-1)d}\right)^{u-1-\frac{1}{d}}.\]
\end{thm}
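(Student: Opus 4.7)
The plan is to run measurable versions of the R\"odl nibble (for the first claim) and of Wormald's differential equation method (for the second claim) directly on the hypergraphing $\s H$, using Bernoulli labels as the source of randomness. The advantage of working in the limit is that expected changes in degree, codegree, and uncovered-vertex density become literal changes in measure, so the concentration inequalities that dominate the finitary proofs largely disappear. The minimality theorem ($\s H$ local-global contains $\s B(\mu_0)$) lets us pass to a local-global equivalent hypergraphing on which honest independent edge-labels live.

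For the first claim, fix a small parameter $c > 0$ and set $p = c/\Delta$. I would select each edge independently with probability $p$, and let $M$ be the subset of selected edges sharing no vertex with any other selected edge. Almost-regularity gives, for almost every $v$, that the probability some selected edge contains $v$ is
\[ 1 - (1-p)^{\deg(v)} = 1 - e^{-c} + O(\delta + c^2); \]
small codegree ensures that, conditional on some selected edge at $v$, the probability of a conflict with another selected edge is $O(c)$. So one nibble step reduces the uncovered-vertex measure by a factor of $e^{-c} + O(c^2 + \delta)$. A direct expectation computation then shows the residual hypergraphing $\s H_1$ (on the uncovered vertices) again satisfies (1)--(3), with $\Delta' \approx \Delta e^{-c}$ and a comparable $\delta'$. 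Iterating (or, equivalently, continuizing in $t$) until the uncovered measure drops below $\epsilon$ and concatenating the matchings produced finishes the first part.

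For the second claim, in the $\Delta$-regular acyclic setting the nibble estimates become exact. Attach to each edge an independent uniform label $\tau(e) \in [0,1]$ and accept edges greedily in increasing order of $\tau$. Let $u(t)$ be the measure of uncovered vertices at time $t$ and $q(t)$ the probability that a fixed vertex is still uncovered. Acyclicity makes the events governing whether neighboring edges survive to time $t$ conditionally independent, so the acceptance rate at $v$ factorizes exactly, yielding a Wormald-type ODE
\[ \tfrac{d}{dt} u(t) = -\Delta\, u(t)\, q(t)^{\,u-1}. \]
Solving this ODE (with $d = \Delta$) should integrate at $t=1$ to the claimed density $1-\bigl(1/((u-1)d)\bigr)^{u-1-1/d}$.

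The main obstacle is propagating the hypotheses (1)--(3) through the iteration in part one. Classically this requires concentration estimates on degrees and codegrees of the residual hypergraph; in the measurable setting the expected values are exact, but one still has to control the variance of local neighborhoods of $\s H_1$ so that only a $\delta'$-measure set of vertices is ``bad.'' I expect this to reduce to the observation that contributions from vertices at bounded graph-distance are conditionally independent of everything outside, together with an appeal to dominated convergence via the limit machinery; nevertheless this is where the technical weight of the proof lives. A secondary, easier point is arranging all the Bernoulli labels used across iterations compatibly with $\s H$, which is precisely what the minimality theorem provides.
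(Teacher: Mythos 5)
Your plan for the first claim has the same architecture as the paper's proof (reduce via the minimality/Ab\'ert--Weiss theorem to the Bernoulli hypergraphing, nibble with iid edge selections at rate roughly $c/\Delta$, show the residual hypergraphing is again good for a smaller degree with a worse tolerance, and iterate), but the step you defer is exactly where the content lies, and the justification you sketch for it is not an argument. Knowing the \emph{expected} residual degree of a surviving vertex is $\Delta e^{-c(u-1)}(1+O(\delta))$ (note the exponent: an edge at $v$ survives only if none of its $\approx u\Delta$ neighboring edges is selected, and conditioning on $v$ surviving leaves the factor $e^{-c(u-1)}$, not $e^{-c}$ as you wrote) does not show that only a small-measure set of vertices has degree far from this value. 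The correlations between survival of two edges $e,f$ at a common vertex do not vanish by ``conditional independence at bounded distance''; they are controlled precisely by the codegree hypothesis, via $|N(e)\cap N(f)|\leq u^2\delta\Delta$, which bounds $\cov\bigl(\mathbf 1[e\mbox{ survives}],\mathbf 1[f\mbox{ survives}]\bigr)$ by $O(\delta)$, after which Chebyshev yields that at most an $O(\delta/\delta')$-fraction of vertices is bad. This second-moment computation (done on the Bernoulli hypergraphing, where the iid labels make it legitimate) is the paper's key lemma; without it the iteration does not close. A second point you do not address: the hypotheses only hold off a set of measure $\delta$, and bad vertices can corrupt the degrees of nearby good ones; the paper first upgrades ``good off a small set'' to ``good almost everywhere'' by a separate compactness argument in the space of local statistics before invoking minimality.

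For the second claim your greedy-by-uniform-times process is a legitimate alternative to the paper's rate-adjusted nibble (in the small-step limit they produce the same random greedy matching), but the analysis offered is wrong as stated. You give one equation in the two unknowns $u(t)$ and $q(t)$ with no evolution equation for $q$, and since on a vertex-transitive acyclic hypergraph your $u$ and $q$ are the same quantity, your equation amounts to the mean-field law $\dot u=-d\,u^{u}$; the claimed exact factorization fails because the events that the $u$ vertices of a single edge are uncovered are correlated through that edge's own history and through the survival conditioning, even on an acyclic hypergraph. For $u=2$ your equation gives uncovered fraction $1/(1+d)$ at $t=1$, whereas the correct value (and the one matching the stated bound) is $e^{-t_{2,d}}=d^{-d/(d-1)}$. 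The paper closes the system by tracking the conditional survival probability $Q$ of an edge given that one of its vertices survives, arriving at
\[q(0)=1,\qquad q'=-\tfrac{1}{d}-\bigl(u-1-\tfrac{u}{d}\bigr)q,\]
with covered fraction $1-e^{-t_0}$ at the time $t_0$ where $q$ vanishes, together with an Euler-approximation error analysis to pass from the discrete steps to the ODE. So to make your route work you would need to introduce the analogous conditional quantity, derive its closed equation, and verify the resulting constant; ``should integrate at $t=1$ to the claimed density'' does not hold for the equation you wrote.
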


The point of this theorem isn't that it's a particularly impressive generalization. Rather, the proofs here are quite a bit softer than the purely finitary proofs of the corresponding asymptotic finite statements. And, we can recover the Frankl--R\"odl theorem via a compactness argument.

\subsection{Notation}

This paper is largely about hypergraphs. All of our hypergraphs will be simple and uniform. So, by a hypergraph we mean $H=(X, E)$ where $E\subseteq \binom{X}{u}$ for some $u$. We call $u$ the uniformity of $H$. The elements of $E$ are called edges or, if we want be emphatic, hyperedges. Sometimes we will abuse notation and write $H$ for the set of edges in $H$.

We will also want to refer to ordered edges, we write $\vec E$ or $\vec H$ for the set $\{(x_1,...,x_u): \{x_1,...,x_u\}\in E\}$. We also call these edges. More generally, for a relation $R$, we will call a tuple $(x_1,...,x_n)$ so that $R(x_1,...,x_n)$ an edge in $R$.

If $\Gamma=\ip{\gamma_1,...,\gamma_n}$ is a marked group with generating set $E=\{\gamma_1,...,\gamma_n\}$ so that, for each $i$, $\gamma_i$ has order $u$, and $\ax: \Gamma\curvearrowright X$ is an action, then we can define an associated hypergraph with uniformity $u$ and degree at most $n$ as follows:
\[\s H(\ax):=(X, \{\{\gamma_i^j\cdot x: j\in [u]\}: i\in [n], x\in X, (\forall j<u)\;  x\not\in \fix(\gamma_i^j)\}).\]

By a cycle in a hypergraph, we mean a Berge cycle, or a tuple of distinct vertices $(v_1,...,v_n)$ and distinct edges $(e_1,...,e_n)$ so that, for all $i$, $v_i, v_{i+1}\in e_i$ with indices taken mod $n$. For two vertices $u,v$ of a hypergraph, their codegree in $H$ is
\[\cod(u,v)=|\{e\in H: u,v\in e\}|.\] So, if $\cod(u,v)>1$, there is a cycle of length $2$ in $H$. The girth of a hypergraph is the smallest length of a cycle (or $\infty$ if the hypergraph is acyclic).

We will talk about isomorphism classes of rooted labelled hypergraphs. By a rooted labelled hypergraph, we mean $(H,o,g)$ where $H$ is a hypergraph, $o$ is some distinguished vertex, and $g$ is function from some subset of the vertices and edges in $H$ to a set of labels. We say $(H,o,g)\cong (G,v,f)$ when there is an isomorphism $\phi$ from $H$ to $G$ that sends $o$ to $v$ and so that $g=f\circ \phi$. We write $[H,o,g]$ for the isomorphism class of $(H,o,g)$.

For a set $A$, its we write $\bf 1_A$ for its characteristic function
\[\bf 1_A(x)=\left\{\begin{array}{cc}1 & x\in A\\
0 & else\end{array}\right.\] (The domain of $\bf 1_A$ should always be clear from context.

Aside from characteristic functions, we reserve bold letters for random variables. We write $\bb P(\s A)$ for the probability of an event $\s A$. If we need to specify the random variable or measure we are taking this probability with respect to, we add a subscript. For instance, if $\mu$ is probability measure on vertices of a hypergraph, and $f$ is a measurable labelling of the hypergraph, we might write
\[\bb P_\mu(f(v)=2)\] for \[\mu(\{v: f(v)=2\}).\]

\subsection{Acknowledgements} Thanks to Clinton Conley and Felix Weilacher for helpful conversations and comments. This work was partially supported by NSF MSPRF grant DMS-2202827.

\section{General nonsense}
We'll start by lifting the general theory of local-global convergence and ultralimits from the setting of pmp graphs to the setting of pmp hypergraphs. Everything goes through (with some notational inconvencience) for pmp structures.

\subsection{Definitions: pmp relations and local-global convergence}

Probability measure preserving (henceforth pmp) graphs admit a handful of equivalent definitions. We can extend these to arbitrary relational structures. Underlying all of these definitions is the notion of a pmp equivalence relation.
\begin{dfn}
    A \textbf{pmp equivalence relation} is a equivalence relation $E$ on a standard probability space $(X,\mu)$ so that each equivalence class is countable and whenever $f: X\rightharpoonup X$ is a Borel partial injection that sends points to $E$-related points and $A\subseteq \dom(f)$ is measurable, then $\mu(f[A])=\mu(A)$.
\end{dfn}

If $E$ is pmp, then certainly any list of injections generating $E$ will be a list of measure preserving maps (and the Lusin--Novikov theorem tells we can always find generating maps). Conversely, if we have a family of generating maps which are measure preserving then any injection can be chopped up into piece-wise applications of these generators. So, we get the following:

\begin{prop}[{\cite[Chapter 1]{KM}}]
    $E$ is a pmp equivalence relation on $(X,\mu)$ if and only if $E=\bigcup_{i\in \N} f_n$, where each $f_n$ is a $\mu$-perserving involution.
\end{prop}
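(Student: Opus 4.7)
I would prove the two directions separately. For the easy direction ($\Leftarrow$), suppose $E = \bigcup_n f_n$ where each $f_n$ is a $\mu$-preserving involution, and let $f : X \rightharpoonup X$ be any Borel partial injection whose graph is contained in $E$. I partition $\dom(f)$ into the disjoint Borel pieces
\[
B_n = \bigl\{x \in \dom(f) \setminus \textstyle\bigcup_{m<n} B_m : f(x) = f_n(x)\bigr\},
\]
which cover $\dom(f)$ since $(x, f(x)) \in E$ for every $x$. On $B_n$ the map $f$ agrees with the measure-preserving involution $f_n$, and because $f$ is injective the images $f(A \cap B_n)$ are pairwise disjoint. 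Summing gives $\mu(f(A)) = \sum_n \mu(A \cap B_n) = \mu(A)$ for every measurable $A \subseteq \dom(f)$.

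For ($\Rightarrow$), suppose $E$ is pmp. The Lusin--Novikov theorem applied to $E \subseteq X \times X$ produces countably many Borel partial functions whose graphs cover $E$, and a routine further decomposition (using that $E$ has countable horizontal sections as well) refines these to Borel partial injections $g_n$. Each $g_n$ is automatically $\mu$-preserving by the pmp hypothesis. So the task reduces to writing a single measure-preserving Borel partial injection $g$ as a union of finitely many $\mu$-preserving involutions.

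To accomplish this, I form the undirected Borel graph on $X$ with edge set $\bigl\{\{x, g(x)\} : x \in \dom(g),\ x \neq g(x)\bigr\}$. Each vertex has degree at most two (one edge via $g$, one via $g^{-1}$), so by the standard Borel edge-coloring theorem of Kechris--Solecki--Todor\v{c}evi\'c for graphs of bounded degree, the edges split into three Borel matchings $M_1, M_2, M_3$. Each $M_i$ gives a Borel involution $\iota_i$ (swap matched pairs, fix everything else), and the graphs of $\iota_1, \iota_2, \iota_3$ together cover those of $g$ and $g^{-1}$. To verify $\iota_i$ is measure-preserving, I decompose a measurable $A$ into the part fixed by $\iota_i$, the part where $\iota_i$ coincides with $g$, and the part where it coincides with $g^{-1}$; the measures of the three images add up correctly because $g$ and $g^{-1}$ preserve $\mu$.

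The main obstacle is the passage from partial injections to involutions. I handle it by invoking a Borel edge-coloring for bounded-degree graphs; without such an input, one would have to carry out an explicit orbit-by-orbit construction on the finite cycles, $\Z$-lines, and $\N$-lines of the dynamical system generated by $g$, which is more involved but still standard.
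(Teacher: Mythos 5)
Your proposal is correct. The backward direction is exactly the paper's (sketched) argument: chop $\dom(f)$ into the Borel pieces where $f$ agrees with one of the generating involutions and sum the measures of the disjoint images; your $B_n$'s make this precise. For the forward direction the paper simply cites Kechris--Miller (i.e.\ the Feldman--Moore decomposition) after noting via Lusin--Novikov that generating maps exist and are automatically measure-preserving, whereas you actually produce the involutions, and by a different device: you view a single measure-preserving partial injection $g$ as a Borel graph of degree at most $2$ and invoke the Kechris--Solecki--Todor\v{c}evi\'c Borel edge-coloring bound to split it into three Borel matchings, each of which extends to a $\mu$-preserving involution. That works, and it is in the spirit of tools the paper uses elsewhere (it cites the same greedy KST bound for marking hypergraphs). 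The classical route you would be replacing is lighter than the ``orbit-by-orbit'' analysis you fear: one fixes a countable Borel separating family $\{B_i\}$ and cuts $\operatorname{graph}(g)$ into the pieces where $x\in B_i\setminus B_j$ and $g(x)\in B_j\setminus B_i$, so each piece has disjoint domain and range and extends to an involution by swapping; no analysis of cycles or lines is needed. So your argument trades that elementary separating-family trick for the KST edge-coloring theorem --- a slightly heavier hammer, but a perfectly sound one, and the measure-preservation verifications you give (piecewise agreement with $g$, $g^{-1}$, or the identity, with disjoint images by injectivity) are exactly right.
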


If we have a pmp equivalence relation $E$, then we can also measure relations that live inside $E$.

\begin{dfn}
    For an equivalence relation $E$ on $X$, \[[E]^n=\{(x_1,...,x_n)\in X^n: x_1 \, E\, x_2\, E\,...\, E\,x_n\}.\]

Suppose $\mu$ is an invariant measure for $E$. For each $n\in \N$, we define a measure on $[E]^n$ as follows: for $A\subseteq [E]^n.$
    \[\tilde\mu(A):=\int_{x\in X} \bigl|\pi_1\inv[x]\cap A\bigr|\;d\mu\]
\end{dfn}

We have a measure-theoretic handshake lemma (that actually characterizes pmp equivalence relations). Integrating the size of the section of a relation over any coordinate should give the same answer.

\begin{prop}
    For any $i\in [n]$ and $A\subseteq [E]^n$,
    \[\tilde \mu(A)=\int_x \bigl|\pi_i\inv[x]\cap A\bigr|\;d\mu\] And if $\tilde \mu$ is invariant in this way, then $E$ must preserve $\mu$.
\end{prop}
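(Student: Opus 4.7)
The plan is to use a Luzin--Novikov decomposition of $E$ to reduce the problem to coordinate-by-coordinate bookkeeping. First I would apply Luzin--Novikov to the Borel set $E \subseteq X \times X$, which has countable vertical sections, to write $E = \bigsqcup_{k \in \N} \Gamma(h_k)$ as a disjoint union of graphs of Borel partial injections $h_k : X \rightharpoonup X$, taking $h_0 = \mathrm{id}$. Under the pmp hypothesis each $h_k$ preserves $\mu$, which is essentially the content of the previous proposition.

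For the first assertion I would use this decomposition to partition $[E]^n$ into Borel pieces
\[B_{\vec k} = \{(x_1, \ldots, x_n) : x_j = h_{k_j}(x_1) \text{ for all } j\},\]
indexed by $\vec k = (k_1,\ldots,k_n) \in \N^n$. On each $A_{\vec k} := A \cap B_{\vec k}$ and for every $i$, the projection $\pi_i$ is a Borel injection, since fixing $x_i$ determines $x_1 = h_{k_i}\inv(x_i)$ and hence the whole tuple. Consequently
\[\int \bigl| \pi_i\inv[x] \cap A \bigr|\, d\mu = \sum_{\vec k} \mu\bigl(\pi_i A_{\vec k}\bigr) = \sum_{\vec k} \mu\bigl(h_{k_i}[\pi_1 A_{\vec k}]\bigr),\]
and measure-preservation of each $h_{k_i}$ makes this sum independent of $i$.

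For the converse it suffices to show that an arbitrary Borel partial injection $f \subseteq E$ preserves $\mu$. Given $A \subseteq \dom(f)$, I would consider $C := \{(x, f(x)) : x \in A\} \subseteq [E]^2$. Then $|\pi_1\inv[x] \cap C| = \bf 1_A(x)$ and $|\pi_2\inv[y] \cap C| = \bf 1_{f[A]}(y)$ (using that $f$ is a function and injective, respectively), so invariance of $\tilde\mu$ across coordinates immediately yields $\mu(A) = \mu(f[A])$.

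The only real obstacle is the bookkeeping at the start: checking that the $B_{\vec k}$ really do form a Borel partition of $[E]^n$ and that each $\pi_i A_{\vec k}$ is measurable. Both follow from the Luzin--Novikov decomposition together with the injectivity of $\pi_i$ on each piece. Once the decomposition is set up, the forward direction reduces to termwise invariance of $\mu$ under each $h_{k_i}$, and the converse is essentially a one-line computation.
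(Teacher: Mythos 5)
Your argument is essentially the paper's: both decompose via Luzin--Novikov into countably many pieces on which every coordinate projection is injective (you slice $[E]^n$ by the address vector $\vec k$ of partial injections generating $E$, the paper writes $A$ as a union of graphs of functions $X\rightharpoonup X^{n-1}$ with injective $\pi_i$-composites), and then sum the measure-preservation identity $\mu(\pi_1 A_{\vec k})=\mu(h_{k_i}[\pi_1 A_{\vec k}])$ termwise. Your one-line proof of the converse via the arity-2 set $C=\{(x,f(x)):x\in A\}$ is exactly the standard argument the paper delegates to \cite{KM}, so the proposal is correct and matches the paper's route.
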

\begin{proof}
    By Luzin--Novikov and some standard coloring arguments, we can write $A$ as $A=\bigcup_{k\in \N} f_k$ where $f_k: X\rightarrow X^{n-1}$ and $\pi_i\circ f_k$ is injective for all $k$. Then, for any $k$,
    \[\mu\bigl(\dom(f_k)\bigr)=\mu\bigl(\im(\pi_i\circ f_k)\bigr)\] so
    \begin{align*}
        \int_{x\in X}\bigl|A\cap \pi_1\inv[x]\bigr|\;d\mu & = \sum_{k\in\N} \mu\bigl(\dom(f_k)\bigr) \\
        & = \sum_{k\in\N} \mu\bigl(\im(\pi_i\circ f_k)\bigr) \\
        & = \int_{x\in X} \bigl|A\cap \pi_i\inv[x]\bigr|\;d\mu.
    \end{align*} The converse is true already if $\tilde \mu$ is invariant for relations of arity 2. Details are in \cite{KM}.
\end{proof}

\begin{dfn}
    A \textbf{pmp relation} is a Borel subset $R\subseteq[E]^n$ for some pmp equivalence relation $E$ and some $n\in \N$. A \textbf{pmp structure} is a standard probability space equipped with a list of pmp relations.
\end{dfn}

So, for example, a pmp directed graph is a measurable orientation of a pmp graph. Since pmp equivalence relations are closed downwards, we can always take $E$ to be the equivalence relation generated by 2D projections of $R$.

Building our definitions on top of pmp equivalence relations constrains what sort of structures we can end up with. For instance, there is no pmp directed acyclic graph where each vertex has outdegree 1 and in degree 2: integrating the edge set over the first coordinate would give us $1$ while integrating over the second coordinate would give us $2$. These restrictions are appropriate to settings where we want to lift finite combinatorics into the measurable setting, since these constraints mirror those constraints that counting arguments place on finite structures.

In the next section, we will borrow tools from ergodic theory to develop a limit theory for pmp structures. Ergodic theory has a good notion of limit for measure preserving functions $f: X\rightarrow X$ and measurable maps $f: X\rightarrow \C$ (namely ultraproducts). If $R$ is a pmp relation, we can enumerate measure-preserving functions $f_{i,j}$ so that $R=\{(x, f_{i1}(x),...,f_{in-1}(x)): x\in X\}$ and code edge-labellings of $R$ as vertex labellings using these addresses for its edges. So, we can apply the ergodic theory machinery. To ease notation, we'll specialize to the case of hypergraphs.

\begin{dfn}
    A \textbf{pmp hypergraph} on $(X,\mu)$ is a hypergraph on $(X,\mu)$ that is a pmp relation.
\end{dfn}

Some key examples:

\begin{prop}
    Any finite hypergraph is pmp, and normalized counting measure is invariant.
\end{prop}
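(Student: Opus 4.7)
The claim is essentially immediate from unwinding the definitions, so my plan is simply to verify each clause in sequence for a finite set $X$ equipped with normalized counting measure $\mu$. The first step is to observe that any Borel partial injection $f : X \rightharpoonup X$ automatically preserves $\mu$: since $X$ is finite, injectivity forces $|f[A]| = |A|$ for every $A \subseteq \dom(f)$, and hence $\mu(f[A]) = |A|/|X| = \mu(A)$.

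From this I would conclude that every equivalence relation $E$ on $X$ is pmp in the sense defined above: the measure-preservation hypothesis on partial injections sending points to $E$-related points holds for \emph{all} Borel partial injections, so it certainly holds for those respecting $E$. In particular, the full equivalence relation $X \times X$ (or, if one prefers, the connectedness relation of $H$) is pmp.

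With a pmp equivalence relation in hand, the last step is to realize the finite hypergraph $H = (X, E_H)$ of uniformity $u$ as a pmp relation. I would take $E$ to be the full equivalence relation and observe that the ordered edge set $\vec H$ is a subset of $X^u = [E]^u$ which is Borel (indeed finite). So $H$ is a pmp relation, and therefore a pmp hypergraph.

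There is no real obstacle here; the only thing to be careful about is correctly parsing the definition of pmp relation, which only requires the ambient equivalence relation (not the relation itself) to satisfy a measure-preservation condition, together with the elementary fact that finite injections are measure-preserving for counting measure.
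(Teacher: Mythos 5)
Your proposal is correct and takes the same route as the paper, whose entire proof is the observation that counting measure is preserved by all injections; you simply unwind the definitions around that same one-line fact. No issues.
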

\begin{proof}
    Counting measure is preserved by all injections.
\end{proof}

\begin{prop}
    If $\ax:(\Z/u\Z)^{*n}\curvearrowright X$ is a pmp action, then the induced hypergraph 
    \[\s H(\ax)= (X, \{\{x, \gamma\cdot x, ..., \gamma^{u-1}\cdot x\}: x\in X, \gamma\in E\})\] is a pmp hypergraph with uniformity $u$. (Recall $E$ is the standard generating set for $(\Z/u\Z)^{*n}$)
\end{prop}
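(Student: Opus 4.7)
The plan is to identify the appropriate pmp equivalence relation containing the hypergraph and then verify the structural conditions in the definition of a pmp relation. Let $E_\ax$ denote the orbit equivalence relation of $\ax$, that is, $x \mathrel{E_\ax} y$ if and only if $y = g \cdot x$ for some $g \in (\Z/u\Z)^{*n}$. Since the group is countable, $E_\ax$ has countable classes, and since each group element acts by a Borel map, $E_\ax$ is Borel.

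First I would verify that $E_\ax$ is pmp. Enumerate $(\Z/u\Z)^{*n}$ as $\{g_k : k \in \N\}$. Then $E_\ax = \bigcup_k \{(x, g_k \cdot x) : x \in X\}$, where each slice is the graph of a $\mu$-preserving Borel bijection since $\ax$ is pmp. By a Luzin--Novikov argument, any Borel partial injection $f \subseteq E_\ax$ can be partitioned into countably many Borel pieces on each of which $f$ coincides with some $g_k$, so $f$ preserves $\mu$ piecewise and hence globally. This is exactly the condition for $E_\ax$ to be pmp, and by the proposition quoted from \cite[Chapter 1]{KM} it also follows that $E_\ax$ is generated by countably many $\mu$-preserving involutions.

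Next, to handle uniformity, I would observe that the orbit $\{x, \gamma \cdot x, \dots, \gamma^{u-1}\cdot x\}$ associated to a generator $\gamma$ has size exactly $u$ if and only if $x \notin \fix(\gamma^j)$ for every $1 \le j < u$, since $\gamma$ has order $u$ and the orbit size must divide $u$. The fixed-point sets $\fix(\gamma^j)$ are Borel, so as in the definition of $\s H(\ax)$ given in the notation section, one restricts the edge set to the Borel complement of these loci, leaving a genuine $u$-uniform Borel hypergraph. Finally, to check that $\s H(\ax)$ is a pmp relation, I would pass to the ordered edge set
\[
\vec{\s H}(\ax) = \bigcup_{i=1}^n \bigl\{ (x, \gamma_i \cdot x, \dots, \gamma_i^{u-1} \cdot x) : x \in X,\ (\forall j<u)\ x \notin \fix(\gamma_i^j) \bigr\},
\]
which is a countable union of Borel graphs, hence a Borel subset of $[E_\ax]^u$.

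The main conceptual step is just the observation that the orbit equivalence relation of a pmp action is pmp, which reduces the problem to bookkeeping. The only mildly finicky point is ensuring that orbits of the generators really have full size $u$, handled by the explicit exclusion of fixed points of $\gamma_i^j$ as in the original definition of $\s H(\ax)$; once this restriction is in place, everything else is a direct unwinding of definitions.
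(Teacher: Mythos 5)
Your proposal is correct and follows essentially the same route as the paper: the heart of both arguments is that any Borel partial injection inside the orbit equivalence relation decomposes into countably many Borel pieces on which it agrees with a group element, each of which preserves $\mu$ since the action is pmp. The extra bookkeeping you include on uniformity (excluding fixed points of $\gamma_i^j$) and on the Borelness of the ordered edge set is fine, though the paper's proof only records the measure-preservation step.
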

\begin{proof}
Given any Borel injection $f$ so that $f(x)\in \Gamma\cdot x$, we can set
\[U_{\gamma}=\{x: \gamma\cdot x=f(x)\}.\] Then, for $\gamma, \delta$ distinct, since $f$ is an injection, $f(U_\gamma)\cap f(U_\delta)=\emptyset$. For any $A\subseteq \dom(f)$, 
\begin{align*}\mu(f[A])=\mu(\bigcup_{\gamma} f[U_\gamma\cap A])= &\sum_{\gamma\in \Gamma}\mu(f(U_{\gamma\cap A})) \\
&= \sum_{\gamma\in \Gamma} \mu(\gamma\cdot U_{\gamma}\cap A)\\
&=\sum_{\gamma\in \Gamma} \mu(U_\gamma\cap A) = \mu(\bigcup_\gamma U_\gamma\cap A)=\mu(A). \end{align*}
\end{proof}

These examples suggest links with dynamics and combinatorics. Moreover, every pmp hypergraph secretly of the kind indicated in the second proposition above.

In \cite{LocalGlobalBackground}, the correspondence between actions of groups and marked graphs gives a uniform approach to various convergence notions in dynamics, combinatorics, and logic. A marking of a graph is a specific kind of enumeration of functions generating the graph. The main point are that, for graphs of uniformly bounded degree, we can find a single group which generates the graphs. In particular, we have a uniform way of traversing neighborhoods in these graphs. We can find analogous markings for hypergraphs of bounded degree and fixed uniformity.

\begin{dfn}
    A \textbf{marked hypergraph} is a pmp hypergraph $\s H$ along with a group action $\ax:\Gamma\curvearrowright X$ so that $\s H=\s H(\ax)$. We say $\ax$ is a marking with $\Gamma$.
\end{dfn}

Markings of a hypergraph may be wildly different from the point of view of actions. And there are many combinatorial questions one could raise about the possible markings of a given hypergraph. But as we are just using markings for bookkeeping purposes, the choice of marking will be largely irrelevant in this paper.

\begin{thm}
    For every $d,u\in\N$, every pmp hypergraph $\s H$ with degree at most $d$ and uniformity $u$ has a marking with $\Gamma=(\Z/u\Z)^{*(ud-u+1)}$
\end{thm}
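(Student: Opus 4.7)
The plan is to reduce the marking problem to a Borel edge-coloring problem. The target group $(\Z/u\Z)^{*(ud-u+1)}$ has one generator per color, each of order $u$, so it suffices to exhibit $ud-u+1$ Borel bijections $\gamma_1,\dots,\gamma_{ud-u+1}$ of $X$ such that (i) each has order dividing $u$, (ii) each preserves $\mu$, (iii) on each orbit of size $u$ the generator cyclically permutes a single hyperedge of $\s H$, and (iv) every edge of $\s H$ arises in this way from exactly one generator. There are no relations to verify: since the $\gamma_i$ are chosen independently, the induced action automatically factors through the free product of $\ud-u+1$ copies of $\Z/u\Z$.

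First I would form the \emph{line hypergraph} of $\s H$: the Borel graph on (an appropriate standard Borel realization of) the edge set $E(\s H)$, where two hyperedges are adjacent if they share a vertex. Since each vertex lies in at most $d$ hyperedges and each hyperedge has $u$ vertices, each hyperedge meets at most $u(d-1)=ud-u$ others, so this Borel graph has maximum degree at most $ud-u$. By the Kechris--Solecki--Todor\v cevi\'c bound on Borel chromatic number, the line graph admits a Borel proper coloring with $ud-u+1$ colors; equivalently, $E(\s H)$ partitions into $ud-u+1$ Borel sets $C_1,\dots,C_{ud-u+1}$, each a Borel matching in $\s H$.

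Next, fix a Borel linear order on $X$. For each color $i$, define $\gamma_i:X\to X$ as follows: if $x$ lies in the unique edge $e\in C_i$ containing it, enumerate $e$ as $y_0<y_1<\dots<y_{u-1}$ in the chosen order and send $y_j\mapsto y_{j+1\bmod u}$; otherwise set $\gamma_i(x)=x$. This is a Borel bijection because $C_i$ is a Borel matching; on each edge of $C_i$ it is a $u$-cycle, so $\gamma_i^u=\mathrm{id}$. Measure preservation of $\gamma_i$ follows from the pmp hypothesis: $\gamma_i$ is a Borel injection sending each $x$ to a vertex in the same component of the equivalence relation generated by $\s H$, so $\mu(\gamma_i[A])=\mu(A)$ for every Borel $A$. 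Finally, for each $x\in X$ and each $i$, either $x$ is a fixed point of $\gamma_i$ or the orbit $\{\gamma_i^j x:j\in[u]\}$ is exactly the unique edge of $C_i$ through $x$; hence $\s H(\ax)=\bigcup_i C_i = \s H$ on the nose.

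The only real obstacle is the Borelness of the coloring step — the rest is bookkeeping. This is handled by the standard Borel greedy argument: the line graph of $\s H$ is Borel and has maximum degree bounded by $ud-u$, which is exactly the hypothesis needed to color it with $ud-u+1$ Borel colors. All remaining choices (a representative ordering of each unordered edge, and the cyclic orientation on each matched edge) are standard applications of Luzin--Novikov together with a Borel linear order on $X$.
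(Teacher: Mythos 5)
Your proposal is correct and matches the paper's argument: both proceed by Borel/measurably coloring the intersection (line) graph of the edge set, whose degree is at most $u(d-1)$, with $ud-u+1$ colors via the greedy Kechris--Solecki--Todor\v cevi\'c bound, and then using a Borel linear order on $X$ to turn each color class of disjoint edges into an order-$u$ cyclic generator. The extra checks you spell out (measure preservation from the pmp hypothesis, no relations needed for a free product action, fixed points contributing no edges) are exactly the bookkeeping the paper leaves implicit.
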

\begin{proof}
 Consider any $u$-uniform pmp hypergraph $\s H=(X,E)$ so that every vertex is only contained in at most $d$ edges. By a greedy coloring bound on the intersection graph, we can measurably color the edges in $\s H$ using $ud-u+1$ colors so that any edges with the same color are disjoint \cite[Proposition 4.6]{KST}. Pick a Borel linear order, $<$, on $X$. Then, let $\Gamma=\ip{a_i: i\in\N, a_i^u=e}$ act on $(X,\mu)$ by $a_i\cdot x=y$ if there is $e=\{x_1,...,x_u\}\in H$ with $x_1<x_2<...<x_u$ so that $e$ gets color $i$, and for some $j$, $x=x_j$ and $y=x_{j+1}$ (mod $u$).
\end{proof}

We want to study the convergence of local statistics of measurable labellings of pmp hypergraphs.

\begin{dfn}
    Fix $r,d,u\in\N$ and compact space $X$. Let $\bb L(r, X, d,u)$ be the set of isomorphism classes of finite rooted $u$-uniform hypergraphs of degree at most $d$ with vertex and edge $X$-labellings (that's isomorphism as rooted labelled hypergraphs). For $k\in \N$, write $\bb L(r,k,d,u)$ for $\bb L(r,[k],d,u)$. We denote elements of $\bb L(r,k,d,u)$ by $[(H,o,g)]$ where $H$ is a hypergraph, $o$ is a root, and $g$ is labelling.  Unless they are relevant, we will suppress $d,u$ in the notation and write $\bb L(r,k).$
\end{dfn} Note that $\bb L(r,k)$ is finite. So, the set of probability measures on $\bb L(r,k)$, $P_1\bigl(\bb L(r,k)\bigr)$, is compact with respect to any reasonable metric, and any two reasonable metrics will be equivalent. We'll stick with the total variation metric for this paper:
\[d_{TV}(\mu,\nu)=\sum_{[H,o,g]\in\bb L(r,k)} \bigl|\mu\bigl([H,o,g]\bigr)-\nu\bigl([H,o,g]\bigr)\bigr|.\]

\begin{dfn}

    For a pmp hypergraph $\s H=(X, E)$ on a probability space $(X,\mu)$, and $f:X\cup  \vec E\rightarrow [k]$ measurable\footnote{For edges, we mean measurable with respect to $\tilde \mu$ defined earlier.}, the \textbf{$r$-local statistics} of $f$ is the measure on $\bb L(r,k)$ given by
    \[\mu^r(f, \s H)\bigl([H,o,g]\bigr)=\mu\bigl(\{x: (H,o,g)\cong(B^{\s G}_r(x), x, f)\}\bigr).\] When $\s H$ is clear from context, we will suppress it.

    The set of \textbf{exact $(r,k)$-local statistics} of $\s H$ is set of all $r$-local statistics of $k$-labellings of $\s H$:
    \[\s L'_{r,k}(\s H)=\{\mu^r(f): f\in[k]^{X\cup \vec E}\mbox{ measurable}\}\] And, the set of \textbf{$(r,k)$-local statistics} is closure of the set of exact $(r,k)$-local statistics:
    \[\s L_{r,k}(\s H)=\overline {\s L'_{r,k}(\s H)}\]

\end{dfn}

The $(r,k)$-local statistics of a hypergraph detect many dynamical and combinatorial parameters. For instance: 
\begin{dfn} A set of edges in $\s H=(X, H)$ is \textbf{independent} if it does not contain an edge. That is, $A$ is independent if $(\forall x_1,...,x_u\in A)\;\{x_1,...,x_u\}\not\in H$. Or, equivalently, $A^n\cap \vec H=\emptyset$. The \textbf{independence ratio} of $\s H$ is the supremum of the measures of measurable independent sets:
\[\alpha_\mu(\s H):= \sup\bigl\{\mu(A): A\subseteq X, \;A\mbox{ is independent}\bigr\}.\] 
\end{dfn} This is detected by $\s L_{1,2}(\s H)$. First, let $\s I$ be the set of measures in $P_1\bigl(\bb L(r,k)\bigr)$ supported on independent sets: 
\[\s I=\biggl\{\nu\in P_1\bigl(\bb L(1,2)\bigr): \nu\bigl(\{[H,o,f]:f\inv[1]\mbox{ is independent in }H \}\bigr)=1\biggr\}\] Then, for any $\s H$
\[\alpha_\mu(\s H)=\sup\bigl\{\nu(\{[H,o,g]: g(0)=1\}): \nu\in \s L_{r,k}(\s H)\cap \s I\bigr\}. \] (Hiding here is the observation that if $\nu\in \s I\cap \s L_{rk}(\s H)$, then $\nu$ is approximated by measures in $\s L_{rk}'(\s H)\cap \s I$.) 

More generally, the maximum density of a set on which a measurable labelling of a hypergraph can satisfy a locally checkable constraint (in the sense of \cite{gridsSurvey}) is detected by $\s L_{r,k}$, where $k$ is the label set and $r$ is the radius needed to check the local constraint. 

Associated to these statistics are notions of containment, equivalence and convergence

\begin{dfn}
    For two pmp hypergraphs $\s H$ and $\s G$, $\s H$ \textbf{local-global contains} $\s G$ if for each $r,k$ the set of $(r,k)$-local statistics of $\s H$ contain those of $\s G$. In symbols:
    \[\s G\preccurlyeq_{lg} \s H:\lra (\forall r,k)\; \s L_{rk}(\s G)\subseteq \s L_{rk}(\s H).\]
\end{dfn}

So, $\s H$ local-global contains $\s G$ if any labelling of $\s G$ can be locally simulated to arbitrary precision by a labelling of $\s H$. Thus, if $\s G\preccurlyeq_{lg} \s H$, then $\alpha_\mu(\s G)\leq \alpha_\mu(\s H)$.

\begin{dfn}
    We say two hypergraphs are \textbf{local-global equivalent} if they have the same local statistics. That is
    \[\s H\equiv_{lg} \s G:\lra \s H\preccurlyeq_{lg} \s G\mbox{ and } \s G\preccurlyeq_{lg} \s H.\]
    A sequence of hypergraphs $\ip{\s H_i:i\in\N}$ is \textbf{local-global Cauchy} if all of their degrees are bounded (say by $d$) and, for every $r,k\in \N$, the sequence $\ip{\s L_{rk}(\s H_i):i\in\N}$ converges in the Hausdorff metric on closed subsets of $P_1(\bb L(r,k)).$ And the sequence \textbf{local-global converges} to $\s H$ if $\s L_{rk}(\s H_i)$ converges to $\s L_{rk}(\s H)$ for all $r,k$.
\end{dfn}

For example, if $\s H$ and $\s G$ are hyperfinite and all of their components are isomorphic, then they are local-global equivalent: you can simulate any labelling you like on large component finite pieces with small boundary. If $\s H$ is an inverse limit of finite hypergraphs (equipped with the limit of counting measures), then $\s H$ is also a local global limit: any labelling of $\s H$ is within $\epsilon$ of a continuous labelling, which will factor through some finite hypergraph in the limiting sequence. For any $\s H$, the graph $\s H\sqcup \s H$ local-global contains $\s H$, and if $\s H$ is ergodic, this containment is strict.

In many applications, we're interested in local-global limits of sequences of finite hypergraphs. If the size of the finite hypergraphs goes to infinity, we end up with an infinite (indeed atomless) pmp hypergraph. Often in the limit various irregularities of the finite sequence are smoothed out. For instance, any limit of random regular hypergraphs will almost surely be acyclic.

In the next subsection, we'll prove compactness, representation, and continuity theorems for local-global convergence.

\subsection{Marked hypergraphs and borrowing from ergodic theory} 

Folklore has it that local-global limits of graphs, weak containment of actions, and $\Sigma_1$-equivalence in continuous model theory are essentially different languages for capturing the same information, where we view an action as a graph with special kind of edge labelling. See \cite{LocalGlobalBackground} for a careful development of these ideas. As indicated in the previous subsection, there's a similar correspondence between edge-labelled hypergraphs and actions of groups generated by torsion elements. All of the theory goes through with a few obvious and benign modifications.  We'll state all the main points here, and give sketches of the main proofs.

Fix a degree bound $d$, a uniformity $u$, and a marked group $\Gamma=\ip{\gamma_1,...,\gamma_{ud-d+1}}$ so that each $\gamma_i$ is of order $u$ and any $u$-uniform hypergraph of degree at most $d$ has a marking with $\Gamma$. We will work with this data throughout the section.

Viewing actions as labelled (hyper)graphs, we get a definition of local statistics for actions as well. 

\begin{dfn}
Suppose $E=\ip{\gamma_1,...,\gamma_n}$ (so, $\Gamma=\ip{E}$ and each element of $E$ is order $u$). For an action $\ax:\Gamma\curvearrowright X$ with Schreier hypergraph $\s H$, the associated \textbf{diagram} is \[d_{\ax}:\s H\rightarrow\s P( E)\]
\[d_{\ax} (x_1,...,x_u)=\{(\gamma, i_2,...,i_u):(\forall n)\; \gamma^{i_n}\cdot x_1=x_n\}.\] That is, $d_{\ax}$ labels an edge with the set of generators which traverse that edge, and the order in which they traverse it.

The $r$-\textbf{local statistics} of a vertex labelling of $f:X\rightarrow [k]$ with respect to $\ax$ is the measure $\mu^r(f, \ax):=\mu^r((f,d_{\ax}), \s H)\in P_1\bigl(\bb L(r,k\times \s P(E))\bigr)$. (Here, $(f, d_{\ax})$ is the obvious joint labelling). The notions of $(r,k)$-local statistics and local-global convergence, containment, and equivalence for actions are all defined analogously to the same for hypergraphs, but using this definition of local statistics for (vertex) labellings.
\end{dfn} 

The local statistics for an action only consider vertex colorings (and the diagram of the action). This is no weaker than tracking edge-labellings too (or even higher-arity labellings), since once we have a marking for a hypergraph, we have canonical addresses for all of its edges, so the $(r,k)$-local statistics of an action are already captured by it's vertex labellings. And, it means we can study the statistics of actions on $(X,\mu)$ by looking at $L^2(X,\mu)$.

\begin{prop}\label{prop:coding}
    For any measure $\nu\in \bb L(r,k)$, there is some $k'\in\N$ and a measure $c(\nu)\in \bb L(r,k')$ so that, for any action $\ax$, $\nu\in \s L_{rk} (\s H(\ax))$ iff $\nu'\in \s L_{rk'}(\ax).$
\end{prop}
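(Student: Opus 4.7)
The plan is to shift the edge labels of a hypergraph labelling into vertex labels, using the fixed generating set to give each edge canonical addresses from each of its incident vertices. With $E=\{\gamma_1,\ldots,\gamma_n\}$ the generators of $\Gamma$, set $[k']:=[k]\times([k]\cup\{*\})^n$. At the level of a single rooted ball $(H,o,g)\in \bb L(r,k)$ we define the coded ball $c(H,o,g)=(H,o,g')$ by
\[g'(x)=\bigl(g(x),\,b_1(x),\ldots,b_n(x)\bigr),\]
where $b_i(x)=g\bigl(\{\gamma_i^j x:j\in[u]\}\bigr)$ if that set is an edge of $H$ and $b_i(x)=*$ otherwise. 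Pushing forward gives a continuous map $c:P_1(\bb L(r,k))\to P_1(\bb L(r,k'))$.

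For the forward direction, suppose $\nu=\lim_m\mu^r(f_m,\s H(\ax))$ for labellings $f_m:X\cup\vec E\to [k]$. Define $f'_m:X\to[k']$ by the pointwise formula above. By construction $\mu^r(f'_m,\ax)$ is exactly the pushforward of $\mu^r(f_m,\s H(\ax))$ under $c$ (the diagram labels contributed by $\ax$ are determined by the generator information already recorded), so the limit gives $c(\nu)\in\s L_{rk'}(\ax)$.

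For the reverse direction, the key observation is that $c(\nu)$ is supported on \emph{consistent} labelled balls: for every edge $e$ traversed by $\gamma_i$ and any two vertices $x,y\in e$, the $i$-th coordinates $g'(x)_i$ and $g'(y)_i$ agree. Consistency is a radius-$1$ property, so if $\mu^r(g_m,\ax)\to c(\nu)$ for some $g_m:X\to[k']$, the measure of vertices at which $g_m$ fails to be consistent tends to $0$. One then resamples $g_m$ on this small set of vertices to produce a consistent $\tilde g_m$, from which one reads off a hypergraph labelling $f_m:X\cup\vec E\to[k]$ by taking first coordinates on vertices and, on an edge traversed by $\gamma_i$, the unambiguous $i$-th coordinate at any of its vertices. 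One then checks $\mu^r(f_m,\s H(\ax))\to\nu$.

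The main obstacle is the cleanup step: verifying that modifying a labelling on a set of vertices of vanishing measure perturbs the $r$-local statistics by $o(1)$. This follows from the standard bound that each such vertex can affect the $r$-ball of at most $d^{O(r)}$ other vertices (using the fixed degree bound $d$ and uniformity $u$), so changing the labels on a set of measure $\epsilon$ changes the total variation distance of the local statistics by at most $d^{O(r)}\epsilon$. With this estimate in hand, both directions go through and $c$ is the required coding.
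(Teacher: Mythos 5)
Your coding of edge labels into vertex labels via generator addresses is exactly the paper's construction: the paper's (sketched) proof likewise defines $c(\nu)$ by recording at each vertex the edge label associated to each generator (together with its power pattern), so the proposal is correct and takes essentially the same approach. Your extra care in the reverse direction --- the consistency observation and the $d^{O(r)}\epsilon$ cleanup bound for modifying a labelling on a small set --- simply fills in details the paper leaves implicit.
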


\begin{proof}
    For oriented edge $e=(v_1,...,v_u)$ in $\s H(\ax)$, there is a generator and an list of powers, $(\gamma, \sigma(2),...,\sigma(u))$ so that $e=(v_1, \gamma^{\sigma(2)}\cdot v_1,..., \gamma^{\sigma(u-1)}\cdot v_1).$ We can code an edge labelling by recording at each vertex $v$, the edge-label associated to each $(\gamma, \sigma).$  So, we can set $c(\nu)\bigl([H,o,(g,d)]\bigr)$ to be the probability that $d$ is the local picture of the diagram for $\ax$ and that $g$ is the local picture of the vertex coding of a labelling sampled from $\nu$.
\end{proof}
\begin{cor}
    If $\ax$ local-global contains $\bx$, then $\s H(\ax)$ local-global contains $\s H(\bx).$
\end{cor}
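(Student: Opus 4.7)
The plan is to simply chase the biconditional of Proposition \ref{prop:coding} in both directions. Unpack the hypothesis: $\ax$ local-global contains $\bx$ means that for every $r$ and $k'$, we have $\s L_{r,k'}(\bx)\subseteq \s L_{r,k'}(\ax)$. We want to show that for every $r,k$, $\s L_{r,k}(\s H(\bx))\subseteq \s L_{r,k}(\s H(\ax))$.

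So fix $r,k$ and pick $\nu\in\s L_{r,k}(\s H(\bx))$. By the ``only if'' direction of Proposition \ref{prop:coding} applied to the action $\bx$, there is $k'$ and a measure $c(\nu)\in P_1(\bb L(r,k'))$ with $c(\nu)\in \s L_{r,k'}(\bx)$. By the containment assumption, $c(\nu)\in \s L_{r,k'}(\ax)$. Now apply the ``if'' direction of Proposition \ref{prop:coding}, this time to the action $\ax$ (using the same $k'$ and the same coded measure $c(\nu)$), to conclude $\nu\in\s L_{r,k}(\s H(\ax))$.

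The only thing worth double-checking is that the coding $c$ from Proposition \ref{prop:coding} depends only on $r,k$ and not on the particular action, which is visible from its construction: the coding records, at each vertex, the map from pairs $(\gamma,\sigma)$ to hyperedge labels, and this bookkeeping is common to all actions in the fixed group $\Gamma$. Because the same $k'$ and the same $c(\nu)$ serve both actions, the two applications of the proposition compose cleanly and no further work is needed. There is really no obstacle here beyond Proposition \ref{prop:coding} itself; the corollary is the expected functoriality of the action-to-hypergraph construction under local-global containment.
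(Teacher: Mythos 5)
Your proof is correct and is exactly the intended argument: the paper states this corollary without proof precisely because it follows by applying the biconditional of Proposition \ref{prop:coding} once to $\bx$, using the containment $\s L_{rk'}(\bx)\subseteq\s L_{rk'}(\ax)$, and once more to $\ax$. Your remark that the coding $c$ and the label set $k'$ are uniform in the action (as the proposition's ``for any action'' quantifier asserts) is the one point that needs checking, and you checked it.
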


One advantage of actions is that they come with a natural notion of ultraproduct. Or, if it's not entirely natural, a notion which fits into a well-studied framework from functional analysis. First we define the ultraproduct of (actions on) Hilbert spaces, then we define the ultraproduct of the actions to be a point realization of the Hilbert space ultraproduct. (One could also use the action on the measure algebra; this turns out to be equivalent).

\begin{dfn} When we write $L^2(X,\mu)$, we mean the Hilbert space equipped with the usual metric $d(f,g)^2=\sqrt{\ip{(f-g),(f-g)}}$, pointwise multiplication and conjugation, a unit $\bf 1:=\bf 1_X$, and inner products. (If you like, this is a commutative tracial {von Neumann} algebra.)

Enumerate $\Gamma$ as $\Gamma=\{\delta_1,\delta_2,...\}$. Suppose that $E=\{\gamma_1,...,\gamma_n\}$ is our distinguished generating set. Given a (possibly nonstandard) measure preserving action $\ax:\Gamma\curvearrowright (X,\mu)$, define the associated \textbf{continuous model}, $M(\ax)$, by 
\[M(\ax)=(L^2(X,\mu), \ax(\gamma_1),...,\ax(\gamma_n),\bf 1_{\fix(\delta_1)}, \bf 1_{\fix(\delta_2)},...)\]
    where we abuse notation slightly and write $\gamma$ for the autormorphism of $L^2(X,\mu)$ given by the action: $\gamma(f)=\gamma\cdot_{\ax} f,$ so $\bigl(\gamma(f)\bigr)(x)=f(\gamma\inv\cdot_{\ax} x)$. We write $\bf 1$ for the unit $\bf 1= \bf 1_X=\bf 1_{\fix(e)}$ and give $M(\ax)$ the trace defined by $\tr(x)=\ip{x, \bf 1}.$

For a sequence of actions $\ax_i:\Gamma\curvearrowright (X_i,\mu_i)$ and ultrafilter $\s U$ on $\N$, the \textbf{ultraproduct} of the $M(\ax_i)$s is:
    \[\lim_{\s U} M(\ax_i)=\bigl (\lim_{\s U} L^2(X_i,\mu_i), \lim_{\s U} \ax_i(\gamma_1),...,\lim_{\s U} \ax_i(\gamma_n), \lim_{\s U} \bf 1_{\fix(\delta_1)},...\bigr).\] Here  $\lim_{\s U} L^2(X_i,\mu_i)$ is the ultraproduct as a tracial von Neumann algebra, i.e. the structure with domain \[\{\ip{x_i:i\in\N}: (\exists N)(\forall i)\; x_i\in M(\ax_i)\mbox{ and }|x_i|<N\}/\sim_{\s U}\] where \[ x \sim_{\s U} y:\lra \lim_{\s U} d(x_i,y_i)=0.\]
    
    We call the $\sim_{\s U}$ equivalence class $[x]_{\s U}$ the ultralimit of the sequence $x$.
    
    And all operations are defined as pointwise ultralimits. For instance:
    \[[x]_{\s U}+[y]_{\s U}= [\ip{x_i+y_i:i\in\N}]_{\s U}\] 
    \[ \ip{[x]_{\s U}, [y]_{\s U}}=\lim_{\s U}\ip{x_i, y_i}\]and \[\ax^{\s U}(\gamma)\bigl([x]_{\s U}\bigr)=[\ip{\ax_i(\gamma)(x_i):i\in\N}].\]

    We write $M(\ax)^{\s U}$ for the case where all the $\ax_i$ are identical to $\ax$. In this case, we call $M(\ax)^{\s U}$ the \textbf{ultrapower} of $M(\ax)$.
\end{dfn}

We can recover $(X,\mu)$ from $M(\ax)$ as follows: measurable sets in $X$ correspond to functions in $M(\ax)$ satisfying $f^2=f$, intersections correspond to products, complements correspond to subtraction from $\bf 1$, the measure corresponds to the inner product against $\bf 1$. More generally, given any commutative tracial {von Neumann} algebra, we can build a Boolean algebra with a premeasure in this way. If we have a separable algebra $\s A$ equipped with countably many automorphisms, we can find a point realization, i.e.~a pmp group action $\ax$ so that $M(\ax)\cong \s A$.

One can check that $\lim_{\s U} M(\ax_i)$ is a commutative tracial {von Neumann} algebra equipped with a set of automorphisms which generate an action of $\Gamma$ (the algebraic identities defining such algebras only involve operations that commute with ultralimits). It turns out that $\lim_{\s U} M(\ax_i)$ is isomorphic to $M(\ax)$ for some action of $\ax:\Gamma\curvearrowright (\Omega,\mu)$ on a nonstandard probability space. We'll take this point realization $\ax$ as our definition of the ultraproduct of actions.

\begin{dfn}
    We say a nonstandard action $\ax$ is an ultraproduct of a sequence of pmp actions $\ip{\ax_i:i\in\N}$ if $M(\ax)$ is isomorphic to $\lim_{i\in\s U} M(\ax_i)$. In this case, we write $\ax=\lim_{i\in\s U} \ax_i.$ If $\ax_i=\bx$ for all $i$, we write $\ax=\bx^{\s U}$.
\end{dfn}

Some basic theorems of continuous model theory tell us how these ultraproducts behave and how to extract standard actions from them. Explaining the model theory in detail would mean developing a rather involved language. This is done in \cite{LocalGlobalBackground}, but here I will just state some consequences for local-global convergence. For readers who are already indoctrinated, I will say that in what follows $\s L_{rk}(\ax)$ is equivalent to the $\Sigma_1$-theory of $\ax$, and one can show that $\Sigma_1$-equivalence for actions implies elementary equivalence.

\begin{thm}\label{prop:fakelos}
    For all $r,k$, $\s L_{rk}(\lim_{\s U}\ax_i)=\lim_{\s U} \s L_{rk}(\ax_i)$, where the latter limit is taken in the Hausdorff metric.
\end{thm}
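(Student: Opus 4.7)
The plan is to reduce the statement to a trace continuity argument for tracial ultraproducts. The key observation is that for any fixed type $[H,o,g] \in \bb L(r, k \times \s P(E))$, the value $\mu^r(f,\ax)([H,o,g])$ is computable as the trace of a fixed polynomial in the generators $\ax(\gamma_j)$, the fixed-point indicators $\bf 1_{\fix(\delta)}$, and the characteristic functions $\bf 1_{f\inv(c)}$ coding the labelling. Indeed, the isomorphism type of the $r$-ball at a vertex $x$ is determined by which words $w$ of length at most $r$ send $x$ to equal or unequal vertices (detected by the $\bf 1_{\fix(\cdot)}$'s) together with the labels $f(w\cdot x)$ (detected by $\ax(w)\inv(\bf 1_{f\inv(c)})$), so the indicator of having a given neighborhood type is a product of such operators and its expectation is a trace. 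Since bounded polynomial expressions and traces commute with ultralimits in $M(\lim_{\s U}\ax_i) = \lim_{\s U} M(\ax_i)$, the theorem reduces to moving labellings between the factors and the ultraproduct.

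For the inclusion $\s L_{rk}(\lim_{\s U}\ax_i) \subseteq \lim_{\s U}\s L_{rk}(\ax_i)$, I would first take an exact statistic $\mu^r(f,\ax)$ of $\ax = \lim_{\s U}\ax_i$. The characteristic functions $e_c := \bf 1_{f\inv(c)}$ form a partition of unity in $M(\ax)$, hence each can be written as $e_c = \lim_{\s U} e_{c,i}$ for some bounded $e_{c,i}\in M(\ax_i)$. Although the $e_{c,i}$ are merely approximate idempotents and approximate partition of unity (with defects $\|e_{c,i}^2 - e_{c,i}\|_2$ and $\|\sum_c e_{c,i} - \bf 1\|_2$ vanishing along $\s U$), a thresholding step, e.g.\ assigning each $x \in X_i$ to the color $c$ maximizing $e_{c,i}(x)$, yields a genuine labelling $f_i:X_i\to[k]$ with $\bf 1_{f_i\inv(c)}$ close to $e_{c,i}$ in $L^2$ for $\s U$-many $i$, hence $\lim_{\s U} \bf 1_{f_i\inv(c)} = e_c$ in $M(\ax)$. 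Trace continuity then gives $\mu^r(f,\ax) = \lim_{\s U}\mu^r(f_i,\ax_i)$, so $\mu^r(f,\ax) \in \lim_{\s U}\s L'_{rk}(\ax_i) \subseteq \lim_{\s U}\s L_{rk}(\ax_i)$. An $\epsilon$-diagonal argument extends this from $\s L'_{rk}$ to its closure.

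The reverse inclusion runs the same argument backwards. Given $\nu = \lim_{\s U}\nu_i$ with $\nu_i \in \s L_{rk}(\ax_i)$, I would first perturb each $\nu_i$ to within $2^{-i}$ of some $\nu_i' = \mu^r(f_i,\ax_i) \in \s L'_{rk}(\ax_i)$, which does not change the $\s U$-limit. The characteristic functions $\bf 1_{f_i\inv(c)}$ are already honest idempotents summing to $\bf 1$ in $M(\ax_i)$, so their ultralimits give a bona fide partition of unity in $M(\ax)$, which is realized by the characteristic functions of some measurable labelling $f$ of the ultraproduct action. Trace continuity yields $\mu^r(f,\ax) = \lim_{\s U}\mu^r(f_i,\ax_i) = \nu$, so $\nu \in \s L'_{rk}(\ax) \subseteq \s L_{rk}(\ax)$.

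The main obstacle is the rounding step in the forward inclusion: one has to verify that if $\|e_{c,i}^2 - e_{c,i}\|_2$ and $\|\sum_c e_{c,i} - \bf 1\|_2$ vanish along $\s U$, then the winner-take-all characteristic functions $\bf 1_{f_i\inv(c)}$ satisfy $\lim_{\s U}\|\bf 1_{f_i\inv(c)} - e_{c,i}\|_2 = 0$, and this closeness must survive the finitely many bounded polynomial expressions that compute the trace formula for $\mu^r(f_i,\ax_i)([H,o,g])$. This is a routine lemma about tracial von Neumann algebra ultraproducts, but it is the essential place where one uses the uniform bound $\|\ax(\gamma_j)\|\leq 1$ (and likewise for the fixed-point indicators) to absorb $L^2$-errors into traces via estimates of the form $\|ab - a'b'\|_2 \leq \|a-a'\|_2\|b\|_\infty + \|a'\|_\infty\|b-b'\|_2$.
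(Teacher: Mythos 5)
Your proposal follows essentially the same route as the paper's proof: encode each probability $\mu^r(f)\bigl([H,o,g]\bigr)$ as the trace of a fixed algebraic expression in the generators, the fixed-point indicators, and the characteristic functions of the labelling, then use that traces and products commute with ultralimits to transfer labellings between the factors and the ultraproduct. The only difference is that you make explicit the rounding/lifting of approximate characteristic functions to genuine labellings (and the passage from exact statistics to their closure), steps the paper's proof leaves implicit; your argument is correct.
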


\begin{proof}
    We can characterize ultralimits in the Hausdorff metric as follows: for $\ip{E_i:i\in\N}$ a sequence in $K(X)$ and $x\in X$,
    \[x\in \lim_{\s U} E_i \lra (\exists x_i\in E_i)\;\lim_{\s U} d(x, x_i)=0.\]
    For an action $\ax:\Gamma\curvearrowright X$, and $f:X\rightarrow [k]$, let $e_i$ be the characteristic function of $f\inv[i]$. So $f=\sum_{i\in[k]} i e_i$. Fix a finite labelled rooted hypergraph $(H,o,g)$, let $Q$ be the set of labellings of $B_r(e)$ and equivalence relations so that the quotient is isomorphic to $(H,o,g)$:
    \[Q=\{(h,\sim): (B_r(e)/\sim, [e]_{\sim}, h/\sim)\cong (H,o,g)\}. \] So,
   \begin{align*} 
        \mu^r(f)\bigl( \{[H,o,g]\} \bigr) & = \sum_{(\sim, h)\in Q} \bb P \bigl( (\forall \gamma,\delta\in B_r(e))\;(\gamma\sim\delta \lra \gamma\cdot x=\delta\cdot x)\wedge f(\gamma)=h(\gamma) \bigr)\\
        & = \sum_{(\sim, h)\in Q} \tr\left( \prod_{\gamma\sim \delta} \bf 1_{\fix(\gamma\delta\inv)} \prod_{\gamma} \gamma\inv\cdot e_{h(\gamma)} \right).
    \end{align*} So the total variation distance $d_{TV}(\mu^r(f), \nu)$ is a sum of traces of simple algebraic functions of $e_1,...,e_n$.

    Traces, products, etc.~all commute with ultralimits, so for any sequence $\ip{f_i:i\in\N}$, where $f_i$ is a labelling of an action $\ax_i$, \[d_{TV}(\mu^r([\lim_{i\in \s U} f_i]), \nu)=\lim_{i\in\s U} d_{TV}(\mu^r(f_i),\nu).\] This means the right hand side is 0 for some sequence iff the left hand side is 0 for some sequence as desired.
\end{proof}

Ultraproducts are saturated in the sense that, if $S$ is a countably family of equations so that every finite subset has a solution in $M(\ax)^{\s U}$, then $S$ has a solution in $M(\ax)^{\s U}$. For local statistics this means the following:

\begin{prop}\label{prop:fakesaturation}
    For any $\ax$ and any ultrafilter, if $\nu\in \s L_{rk}(\ax^{\s U})$, then $\nu=\mu^r(f,\ax^{\s U})$ for some $f$.
\end{prop}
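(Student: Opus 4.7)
The approach is to invoke (countable) saturation of the ultrapower to upgrade approximate realizations of $\nu$ to an exact one. The key observation is that, for fixed $r,k$, the set $\bb L(r,k)$ is finite, so the condition $\mu^r(f,\ax^{\s U})=\nu$ amounts to a \emph{finite} system of trace equations in the characteristic functions $e_i:=\bf 1_{f\inv[i]}$, conjoined with the polynomial identities forcing $(e_i)$ to be a partition of unity: $e_i^2=e_i$, $e_ie_j=0$ for $i\neq j$, and $\sum_i e_i=\bf 1$.

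Unpacking the hypothesis, $\nu\in\s L_{rk}(\ax^{\s U})$ yields labellings $f_n:X^{\s U}\to[k]$ with $d_{TV}\bigl(\mu^r(f_n,\ax^{\s U}),\nu\bigr)\to 0$. Each $f_n$ lifts to a sequence of labellings $\tilde f^{n,j}:X\to[k]$ whose fiber characteristic functions $\tilde e^{n,j}_i:=\bf 1_{(\tilde f^{n,j})\inv[i]}$ represent $e_i^n:=\bf 1_{f_n\inv[i]}$ in the ultrapower. Let $P_{[H,o,g]}$ denote the sum of traces of monomials in $(e_i)$, generators, and fixed-point projections coming from the trace computation in the proof of Theorem \ref{prop:fakelos}, so that $P_{[H,o,g]}(e)=\mu^r(f)(\{[H,o,g]\})$ whether we evaluate in $M(\ax)$ or in $M(\ax)^{\s U}$.

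Now diagonalize: for each $n$, the set $A_n:=\bigl\{j:(\forall[H,o,g])\;|P_{[H,o,g]}(\tilde e^{n,j})-P_{[H,o,g]}(e^n)|<1/n\bigr\}$ belongs to $\s U$, since continuous operations commute with ultralimits. Put $B_n:=A_1\cap\cdots\cap A_n\in\s U$ and set $n(j):=\sup\{n:j\in B_n\}$, so $n(j)\to\infty$ along $\s U$. The diagonal $\hat e_i:=[(\tilde e^{n(j),j}_i)_j]_{\s U}$ inherits the partition-of-unity identities from its representatives, and hits the target trace values exactly, since for each $[H,o,g]$ the triangle inequality gives $\bigl|P_{[H,o,g]}(\tilde e^{n(j),j})-\nu(\{[H,o,g]\})\bigr|\le 1/n(j)+d_{TV}\bigl(\mu^r(f_{n(j)}),\nu\bigr)\to 0$ along $\s U$. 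Thus $(\hat e_i)$ is the indicator partition of the labelling $f$ sought.

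The main obstacle is the ultrafilter bookkeeping in the diagonalization --- arranging $n(j)\to\infty$ along $\s U$ while simultaneously witnessing the finitely many approximate conditions. This is a routine application of countable incompleteness of (nonprincipal) $\s U$ against the descending chain $B_n$. One could alternatively invoke $\aleph_1$-saturation of the countably incomplete ultrapower $M(\ax)^{\s U}$ from continuous model theory as a black box, but the explicit diagonalization keeps the essential role of the finiteness of $\bb L(r,k)$ visible.
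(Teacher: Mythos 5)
Your proof is correct, but it takes a more roundabout route than the paper's. The paper first applies Theorem \ref{prop:fakelos} with the constant sequence $\ax_i=\ax$, which gives $\s L_{rk}(\ax^{\s U})=\lim_{\s U}\s L_{rk}(\ax)=\s L_{rk}(\ax)$; so the hypothesis immediately yields labellings $f_i$ of $\ax$ \emph{itself} with $\lim_{\s U}d_{TV}\bigl(\mu^r(f_i,\ax),\nu\bigr)=0$, and then the single ultralimit $f=\lim_{\s U}f_i$ realizes $\nu$ exactly, again by the commutation of the trace expressions with ultralimits. You instead start from approximating labellings $f_n$ of the ultrapower, which forces two extra steps that the paper never needs: (i) lifting each partition of unity $(e_i^n)$ in $M(\ax)^{\s U}$ to a sequence of genuine $[k]$-labellings of $X$ (true, but it deserves a word --- e.g.\ take representatives $x_i^j$ and round via $\tilde f^{n,j}(x)=\arg\max_i x_i^j(x)$, checking the rounded indicators still represent $e_i^n$), and (ii) the diagonalization over $\s U$, where as you note one must trim the sets $B_n$ (say to $B_n\cap[n,\infty)$) so that $n(j)$ is finite. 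Your argument does make visible exactly which instance of $\aleph_1$-saturation is being used and where finiteness of $\bb L(r,k)$ enters, which is a pedagogical plus; the paper's version buys brevity by transferring the approximants downstairs via the \L o\'s-type theorem before taking any ultralimit. Both arguments (and the statement itself) implicitly require $\s U$ nonprincipal, which you at least acknowledge via countable incompleteness.
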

\begin{proof}
    By the above, $\nu$ is a limit of measures $\mu^r(f_i,\ax)$ where $f_i\in M(\ax)$. We can let $f=\lim_{i\in \s U} f_i$.
\end{proof}

We can also extract standard actions from ultraproducts. The following is a special case of the Lowenheim--Skolem theorem for continuous logic.

\begin{thm}\label{prop:fakelowenheimskolem}
    For any countable $S\subseteq \lim_{\s U} M(\ax_i)$, there is a standard pmp action $\bx$ and an embedding $\Phi: M(\bx)\rightarrow \lim_{\s U} M(\ax_i)$ so that $\s L_{rk}(\bx)=\lim_{\s U} \s L_{rk}(\ax_i)$ and $S\subseteq \im(\Phi).$
\end{thm}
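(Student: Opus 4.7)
The plan is to invoke the downward L\"owenheim--Skolem theorem for continuous logic to produce a separable elementary substructure of $\lim_{\s U} M(\ax_i)$ containing $S$, and then point-realize that substructure as $M(\bx)$ for a standard pmp action $\bx$.

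First I would apply L\"owenheim--Skolem in the continuous language $\s A$ whose signature includes the tracial commutative {von Neumann} algebra operations (sum, product, scalar multiplication, conjugation, trace, and norm) together with the countably many distinguished constants $\bf 1_{\fix(\delta_j)}$ and the countably many distinguished unary function symbols $\ax(\gamma_1),\dots,\ax(\gamma_n)$ (plus their iterates, which are already definable). This yields $N\preceq\lim_{\s U} M(\ax_i)$ with $|N|\leq \aleph_0$ and $S\subseteq N$. Because each of the distinguished operations is interpreted in $N$, the substructure $N$ is itself a separable commutative tracial {von Neumann} algebra equipped with an action of $\Gamma$ by trace-preserving $*$-automorphisms.

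Second, I would produce the point realization. A separable commutative tracial {von Neumann} algebra is isomorphic to $L^\infty(\Omega,\mu)\subseteq L^2(\Omega,\mu)$ for a standard probability space $(\Omega,\mu)$: take the Gelfand spectrum of the unital $C^*$-subalgebra of elements of norm at most $1$, push forward the trace to a Radon probability measure, and identify $(\Omega,\mu)$ with the resulting standard probability space. The trace-preserving $*$-automorphisms coming from $\Gamma$ correspond under this identification to a measure-preserving action $\bx:\Gamma\curvearrowright(\Omega,\mu)$, so we may take $M(\bx)=N$ and let $\Phi$ be the inclusion; by construction $S\subseteq \im(\Phi)$.

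Finally, the local statistics equality follows from elementarity combined with Theorem \ref{prop:fakelos}. The computation in the proof of Theorem \ref{prop:fakelos} expresses each $\mu^r(f,\ax)([H,o,g])$ as a trace of an algebraic expression in the $\bf 1_{\fix(\gamma\delta\inv)}$ and the $\gamma\inv\cdot e_j$, and $\s L_{rk}$ is determined by the closure of the set of such traces as $f$ ranges over labellings in $N$. Since $N\preceq \lim_{\s U} M(\ax_i)$, these definable sets of traces coincide, giving $\s L_{rk}(\bx)=\s L_{rk}(\lim_{\s U}\ax_i)$; Theorem \ref{prop:fakelos} then identifies the right-hand side with $\lim_{\s U}\s L_{rk}(\ax_i)$. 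The one step that requires genuine care---and that I view as the main obstacle---is justifying the point realization: one must check that the separable commutative tracial {von Neumann} algebra produced by L\"owenheim--Skolem really does correspond to a standard $(\Omega,\mu)$ and that the abstract $*$-automorphisms of $\Gamma$ lift to honest $\mu$-preserving transformations, rather than to automorphisms of the measure algebra only. This is a standard application of the Gelfand and Mackey point-realization theorems, but the authors have set things up specifically so that it can be invoked, and the details are carried out in the analogous graph case in \cite{LocalGlobalBackground}.
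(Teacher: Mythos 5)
Your proposal is correct and follows essentially the same route as the paper, which proves this statement only in sketch form: take a point realization of a separable (elementary) closure containing $S$ inside $\lim_{\s U} M(\ax_i)$, citing \cite{LocalGlobalBackground} for the continuous-logic L\"owenheim--Skolem and point-realization details. Your elaboration—downward L\"owenheim--Skolem to get a separable elementary substructure, Gelfand/Mackey point realization to produce the standard action $\bx$, and elementarity plus Theorem \ref{prop:fakelos} to match the local statistics—is exactly the intended argument.
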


Roughly, we can take $\bx$ to be a point realization of the closure of $S$. In fact, we can arrange it so that $\Phi$ is much stronger than an embedding. See \cite{LocalGlobalBackground} for a full statement.

So, if $\nu\in \s L_{rk}(\s H)$, then we can swap $\s H$ out for an equivalent hypergraph where $\nu$ is the local statistics of some labelling (rather than just approximated by the statistics of labellings).

\begin{cor}\label{cor:swap}
    For any pmp action $\ax$ and $\nu\in \s L_{rk}(\ax)$, there is local-global equivalent standard pmp action $\bx$ with $\nu=\mu^r(f,\bx)$ for some $f$.
\end{cor}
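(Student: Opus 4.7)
The plan is to chain together the three preceding results. First, I would fix any non-principal ultrafilter $\s U$ on $\N$ and form the ultrapower $\ax^{\s U}$. Applying Theorem \ref{prop:fakelos} to the constant sequence $\ip{\ax: i \in \N}$ gives $\s L_{rk}(\ax^{\s U}) = \s L_{rk}(\ax)$ for every $r, k$. In particular, our given $\nu$ lies in $\s L_{rk}(\ax^{\s U})$, so by the saturation statement (Proposition \ref{prop:fakesaturation}) there is a labelling $f \in M(\ax^{\s U})$ with $\nu = \mu^r(f, \ax^{\s U})$ exactly, not merely approximately. I would encode $f$ by its level-set projections $e_i = \bf 1_{f\inv[i]}$ for $i \in [k]$.

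Next, I would apply the L\"owenheim--Skolem theorem (Theorem \ref{prop:fakelowenheimskolem}) with the finite set $S = \{e_1, \dots, e_k\} \subseteq M(\ax^{\s U})$. This produces a standard pmp action $\bx$ and an embedding $\Phi: M(\bx) \to M(\ax^{\s U})$ with $\s L_{rk}(\bx) = \lim_{\s U} \s L_{rk}(\ax) = \s L_{rk}(\ax)$ for all $r, k$, and with each $e_i$ in the image of $\Phi$. So $\bx$ is local-global equivalent to $\ax$, and pulling back the $e_i$ through $\Phi$ gives orthogonal projections $e'_i \in M(\bx)$ summing to $\bf 1$. Setting $f' = \sum_{i \in [k]} i \cdot e'_i$ yields a $[k]$-valued measurable labelling of $\bx$.

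Finally, I would verify $\mu^r(f', \bx) = \nu$. The explicit formula for $r$-local statistics derived in the proof of Theorem \ref{prop:fakelos} expresses each atomic mass $\mu^r(f')\bigl(\{[H,o,g]\}\bigr)$ as a sum of traces of words in the action generators $\bx(\gamma_j)$, the fixed-point characteristic functions, and the projections $e'_i$. Since $\Phi$ is a homomorphism of continuous models preserving products, the action of $\Gamma$, and the trace, each such trace computed in $M(\bx)$ matches the corresponding trace computed for $f$ in $M(\ax^{\s U})$. Hence $\mu^r(f', \bx) = \mu^r(f, \ax^{\s U}) = \nu$.

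The main obstacle is making sure $\Phi$ really preserves all the ingredients used in computing local statistics rather than only the traces of pure generator words. Including the level-set projections of $f$ in the set $S$ handed to L\"owenheim--Skolem is precisely what secures this; after that, no further estimates are needed, and the whole argument reduces to unpacking the formula already established in the proof of Theorem \ref{prop:fakelos}.
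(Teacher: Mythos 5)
Your argument is exactly the paper's proof: the paper's one-line argument is "there is such an $f$ in the ultrapower (by saturation, since the ultrapower has the same local statistics), so take $\bx$ and $\Phi$ as in Theorem \ref{prop:fakelowenheimskolem} with $f\in\im(\Phi)$." Your version merely spells out the details (putting the level-set projections into $S$ and checking the trace formula transfers along the embedding), so it is correct and takes essentially the same route.
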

\begin{proof}
    There is such an $f$ in the ultrapower of $M(\ax)$, so we can take $\bx$ and $\Phi$ as in the previous theorem so that $f\in\im(\Phi)$.
\end{proof}

And, we can characterize local-global containment for actions directly in terms of the dynamics of ultrapowers (this might remind informed readers of weak-containment)

\begin{thm}
    The following are equivalent for a pmp actions $\ax$ and $\bx$:
    \begin{enumerate}
        \item $\bx\preccurlyeq_{lg} \ax$
        \item $\ax^{\s U}$ factors onto $\bx$ via a factor map that preserves stabilizers.
    \end{enumerate}
\end{thm}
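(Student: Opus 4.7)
The plan is to argue that both (1) and (2) are equivalent to the existence of a structure-preserving embedding $\Phi:M(\bx)\hookrightarrow M(\ax)^{\s U}$, by which I mean one that preserves the unit, inner product, multiplication, conjugation, each generator $\ax(\gamma_i)$, and each distinguished constant $\bf 1_{\fix(\delta_i)}$. Point realization of such a $\Phi$ (standard for commutative tracial algebras) yields a factor map $\ax^{\s U}\twoheadrightarrow\bx$, and preserving the $\bf 1_{\fix(\delta_i)}$'s is precisely the stabilizer-preserving condition. Conversely, a stabilizer-preserving factor map induces such a $\Phi$ by pullback of $L^2$-functions.

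For (2) $\Rightarrow$ (1): a stabilizer-preserving factor $\pi$ pulls any measurable $f:Y\to[k]$ back to $f\circ\pi$, and equivariance plus stabilizer matching forces the diagram labellings on radius-$r$ balls to agree, so $\mu^r(f,\bx)=\mu^r(f\circ\pi,\ax^{\s U})$. Hence $\s L_{rk}(\bx)\subseteq\s L_{rk}(\ax^{\s U})$, and applying Theorem \ref{prop:fakelos} to the constant sequence (together with closedness of $\s L_{rk}(\ax)$) identifies the latter with $\s L_{rk}(\ax)$.

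For (1) $\Rightarrow$ (2): since $M(\bx)$ is separable, pick a countable, rational-$*$-subalgebra $D\subseteq M(\bx)$ which is $L^2$-dense, $\Gamma$-invariant, and contains each $\bf 1_{\fix(\delta_i)}$. Any finite $F\subseteq D$ can be encoded as a $[k]$-labelling of $\bx$ (for suitable $k$), and for each $r$ the joint traces among the elements of $F$, the group elements of word length at most $r$, and the first $r$ fixed-point indicators are all determined by some $\nu_{F,r}\in\s L_{rk}(\bx)$. By hypothesis $\nu_{F,r}\in\s L_{rk}(\ax)$, and Proposition \ref{prop:fakesaturation} supplies a labelling of $\ax^{\s U}$ realizing $\nu_{F,r}$ exactly. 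The countable saturation of $M(\ax)^{\s U}$ then lets us glue these partial trace-preserving assignments into a single $\Phi:D\to M(\ax)^{\s U}$, and continuity extends $\Phi$ to all of $M(\bx)$.

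The main obstacle is ensuring simultaneous preservation of the countably many $\bf 1_{\fix(\delta_i)}$'s across the back-and-forth. The key point is that once $r$ exceeds the word length of $\delta_i$, the joint traces involving $\bf 1_{\fix(\delta_i)}$ can be read off from appropriate $r$-local statistics, so requiring preservation of all of these is already encoded in the hypothesis $\bx\preccurlyeq_{lg}\ax$ and comes for free once saturation of $M(\ax)^{\s U}$ is invoked.
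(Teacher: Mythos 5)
Your proposal is correct and follows essentially the same route as the paper's own sketch: for (2)$\Rightarrow$(1) you pull labellings back through the embedding $M(\bx)\hookrightarrow M(\ax)^{\s U}$ induced by the stabilizer-preserving factor and read off local statistics from traces, then invoke Theorem \ref{prop:fakelos}/Proposition \ref{prop:fakesaturation}; for (1)$\Rightarrow$(2) you simulate finite subsets of the dense subalgebra generated by characteristic functions (together with the fixed-point indicators) and glue by countable saturation of the ultrapower. The extra care you take with the $\bf 1_{\fix(\delta_i)}$'s and the diagram data is exactly the content the paper leaves implicit, so no further changes are needed.
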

\begin{proof}[Proof sketch]
For the implication from (2) to (1), consider any $[k]$-labelling of $\bx$, $f=\sum_{i=1}^k ie_i$. We can read off the local statistics of $f$ from the traces of various algebraic functions of the $e_i$s. A factor maps corresponds to an embedding of $M(\bx)$ into $M(\ax)^{\s U}$, say via $\Phi$, so $\Phi(f)$ has the same local statistics as $f$. This mean $\mu^r(f,\bx)\in \s L_{rk}(\ax^{\s U})$ The previous proposition then tells us $\mu^r(f,\bx)\in \s L_{rk}(\ax)$.

For the other implication, since ultraproducts are saturated, it suffices to simulate any finite subset of $M(\bx)$ up to $\epsilon$.  The local statistics of $\bx$ tell us how to simulate the algebra generated by finitely many characteristics functions. And the characteristic functions generate a dense subalgebra.
\end{proof}

We're predominantly interested in containment, convergence, and equivalence of \textit{hypergraphs}, though. And the correspondence between containment of hypergraphs and containment of their markings is a little messier than one would like. Even in the case of uniformity $2$, all 2-regular acyclic pmp graphs are local-global equivalent, but some of them come from $\Z$-actions while other do not.

Still, we have the following:

\begin{thm}
For pmp hypergraphs $\s G$, $\s H$, the following are equivalent
    \begin{enumerate}
        \item $\s G$ local-global contains $\s H$
        \item For any marking $\bx$ of $\s H$, there is some $\s H(\ax)\equiv_{lg} G$ and so that $\bx \preccurlyeq \ax$
        \item For some marking $\bx$ of $\s H$ and some $\s  H(\ax)\equiv_{lg} \s G$, $\bx\preccurlyeq_{lg} \ax$.
    \end{enumerate}
\end{thm}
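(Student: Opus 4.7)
Proof proposal. The implications $(2) \Rightarrow (3)$ and $(3) \Rightarrow (1)$ are straightforward: $(2) \Rightarrow (3)$ follows by picking any marking of $\s H$, which exists by the marking theorem above; for $(3) \Rightarrow (1)$, the corollary after Proposition \ref{prop:coding} gives $\s H = \s H(\bx) \preccurlyeq_{lg} \s H(\ax) \equiv_{lg} \s G$.

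The substance is in $(1) \Rightarrow (2)$. Fix a marking $\bx$ of $\s H$ and an arbitrary marking $\mathsf c$ of $\s G$, and pass to an ultrapower $\mathsf c^{\s U}$; its Schreier hypergraph $\s H(\mathsf c^{\s U})$ is local-global equivalent to $\s G$ by Theorem \ref{prop:fakelos}. The strategy is to construct on the same underlying space a \emph{different} $\Gamma$-action $\ax^{\s U}$ whose diagram mimics $d_{\bx}$ locally, and then extract a standard action via L\"owenheim--Skolem.

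Enumerate a countable family $\{f_i : i \in \N\}$ of labellings of $\bx$ dense in $M(\bx)$. For each $n$, the joint statistics of $(d_\bx, f_1, \ldots, f_n)$, viewed as an edge-and-vertex labelling of $\s H$, lie in some $\s L_{rk_n}(\s H)$; by hypothesis (1) and Theorem \ref{prop:fakelos}, they also lie in $\s L_{rk_n}(\s H(\mathsf c^{\s U}))$. The ultrapower is $\omega_1$-saturated, so Proposition \ref{prop:fakesaturation} applied jointly realizes the entire countable type at once: there exist an edge labelling $d$ on $\s H(\mathsf c^{\s U})$ and vertex labellings $g_i$ on $\mathsf c^{\s U}$'s space whose joint local statistics match those of $(d_\bx, f_1, f_2, \ldots)$ at every radius. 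Since being the diagram of a $\Gamma$-action is expressible by local statistics at large enough radius, $d$ is itself the diagram of a $\Gamma$-action $\ax^{\s U}$ on the underlying space. Because $d_\bx$ labels every edge of $\s H$ non-trivially, so does $d$ almost surely; hence $\s H(\ax^{\s U}) = \s H(\mathsf c^{\s U}) \equiv_{lg} \s G$. Moreover, $\mu^r(g_i, \ax^{\s U}) = \mu^r(f_i, \bx)$ for each $i$, and by density $\bx \preccurlyeq_{lg} \ax^{\s U}$.

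Applying Theorem \ref{prop:fakelowenheimskolem} to $\ax^{\s U}$, with $S$ containing the $g_i$, extracts a standard pmp action $\ax$ with $\ax \equiv_{lg} \ax^{\s U}$; then $\s H(\ax) \equiv_{lg} \s H(\ax^{\s U}) \equiv_{lg} \s G$ by the corollary after Proposition \ref{prop:coding}, and $\bx \preccurlyeq_{lg} \ax$ since the relevant local statistics are preserved. The main obstacle is the joint saturation step: realizing $d_\bx$'s statistics and each $f_i$'s statistics separately in the ultrapower is routine, but one needs a \emph{single} edge labelling $d$ compatible with all the realizations of the $f_i$ simultaneously, so that $\ax^{\s U}$ is a single well-defined action containing $\bx$. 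This is exactly the role of the $\omega_1$-saturation of ultraproducts.
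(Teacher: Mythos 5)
Your proposal is correct and follows essentially the same route as the paper: code the marking $\bx$ by its diagram, use saturation of the ultrapower of a marking of $\s G$ to realize that diagram (jointly with a countable dense family of labellings of $\bx$) as a marking code with the same local statistics, and then descend via L\"owenheim--Skolem. The one wrinkle is that Theorem \ref{prop:fakelowenheimskolem} is stated for ultraproducts rather than for the re-marked action $\ax^{\s U}$ you construct upstairs; this is repaired exactly as in the paper by placing a vertex code for $d$ (along with the $g_i$) into the countable set $S$, applying the theorem to the original ultrapower, and only then reading off the induced marking of the resulting standard action.
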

\begin{proof}[Proof sketch]
Points (2) and (3) both imply (1) by Proposition \ref{prop:coding}. Of course (2) implies (3). To conclude $(2)$ from $(1)$, note that a marking is coded by its diagram, a certain kind of edge-labelling. Suppose $\s H(\bx)\preccurlyeq_{lg} \s H(\ax)$. Then, in $M(\ax^{\s U})$ we can find a code for a marking, $c$, with all the same local-statistics as $\bx$. By Theorem \ref{prop:fakelowenheimskolem}, there is some standard pmp action $\ax'\equiv \ax$ so that, up to isomorphism $c\in M(\ax')$. Say $c$ induces a marking $\mathsf{c}$ of $\s H(\ax')$. Then $\bx \preccurlyeq_{lg} \mathsf{c}$ and $\s H(\mathsf{c})=\s H(\ax')\equiv \s H(\ax).$
\end{proof}

And, typically, compactness of the space of (weak equivalence classes of) actions obviates this issue of choosing markings. We can conclude from the forgoing that local-global Cauchy sequences of hypergraphs converge to hypergraphs.

\begin{cor}[c.f. Hatami--Lovasz--Szegedi]
    If $\ip{G_i:i\in\N}$ is a sequence of pmp hypergraph which is local-global Cauchy, then $\ip{G_i:i\in\N}$ local-global converges to a local-global equivalence class of some pmp hypergraph.
\end{cor}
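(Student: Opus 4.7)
The plan is to produce the limit hypergraph by lifting to actions, forming an ultraproduct, extracting a standard realization via Löwenheim--Skolem, and then translating local statistics back to the hypergraph level with the coding proposition.

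Pick a common marking group $\Gamma = (\Z/u\Z)^{*(ud-u+1)}$ and use the marking theorem to choose markings $\mathsf{a}_i$ of each $G_i$. Fix a nonprincipal ultrafilter $\mathcal{U}$ on $\N$ and form the ultraproduct action $\mathsf{a} = \lim_{\mathcal{U}} \mathsf{a}_i$. By Theorem~\ref{prop:fakelowenheimskolem} there is a standard pmp action $\mathsf{b}$ with $\mathcal{L}_{rk}(\mathsf{b}) = \mathcal{L}_{rk}(\mathsf{a})$ for every $r,k$, and I claim $\mathcal{H} := \mathcal{H}(\mathsf{b})$ is a local-global limit of $\ip{G_i : i \in \N}$.

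To verify this, fix $r$ and $k$, and let $c$ denote the coding from Proposition~\ref{prop:coding}, so $\mathcal{L}_{rk}(G_i) = c\inv(\mathcal{L}_{rk'}(\mathsf{a}_i))$ and $\mathcal{L}_{rk}(\mathcal{H}) = c\inv(\mathcal{L}_{rk'}(\mathsf{b}))$. By Theorem~\ref{prop:fakelos}, $\mathcal{L}_{rk'}(\mathsf{b}) = \lim_{\mathcal{U}} \mathcal{L}_{rk'}(\mathsf{a}_i)$ in Hausdorff metric. One inclusion is immediate: if $\nu \in \lim_i \mathcal{L}_{rk}(G_i)$ with witnesses $\nu_i \to \nu$, then continuity of $c$ yields $c(\nu_i) \to c(\nu)$, so $c(\nu) \in \mathcal{L}_{rk'}(\mathsf{b})$ and hence $\nu \in \mathcal{L}_{rk}(\mathcal{H})$. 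For the reverse, given $\nu \in \mathcal{L}_{rk}(\mathcal{H})$, saturation (Proposition~\ref{prop:fakesaturation}) realizes $c(\nu) = \mu^r(f, \mathsf{a})$ for some vertex labelling $f = [\ip{f_i : i \in \N}]_{\mathcal{U}}$. Since $f$ is a consistent (coded) labelling, the $f_i$ are asymptotically consistent along $\mathcal{U}$, so each may be modified on a $\mathcal{U}$-negligible set to an actually consistent labelling $\tilde f_i$ corresponding to an edge-and-vertex labelling $\hat f_i$ of $G_i$. Then $\mu^r(\hat f_i, G_i)$ ultralimits to $\nu$, and since the sequence $\mathcal{L}_{rk}(G_i)$ is Cauchy, the ultralimit equals the ordinary limit, giving $\nu \in \lim_i \mathcal{L}_{rk}(G_i)$.

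The main obstacle is this reverse inclusion: the preimage map $c\inv$ is not continuous in Hausdorff metric on the full ambient simplex, since $\mathcal{L}_{rk'}(\mathsf{a}_i)$ can contain inconsistent labellings whose ultralimit happens to be consistent. The fix rests on consistency of a coded labelling being a local, first-order condition, which permits the small modification producing the $\tilde f_i$. The rest is routine bookkeeping combining Proposition~\ref{prop:coding}, Theorem~\ref{prop:fakelos}, Löwenheim--Skolem, and the Cauchy hypothesis.
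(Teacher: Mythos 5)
Your proposal is correct and follows essentially the same route as the paper: mark the hypergraphs with a common group, pass to the ultraproduct of the markings, use Theorem~\ref{prop:fakelos} together with L\"owenheim--Skolem (Theorem~\ref{prop:fakelowenheimskolem}) to extract a standard pmp action with the limiting statistics, and decode via Proposition~\ref{prop:coding}. The extra care you take with the reverse inclusion (rounding nearly consistent coded labellings to genuine edge labellings via saturation) is exactly the bookkeeping the paper's terse proof leaves implicit, so it is a welcome elaboration rather than a different argument.
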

\begin{proof}
    It suffices to find a limiting class for some subsequence. Let $\ip{\ax_i:i\in\N}$ be a sequence of markings of $G_i$. We can find a subsequence $f(i)$ so that $\s L_{rk}(\ax_{f(i)})$ converges to $\s L_{rk}(\lim_{\s U} \ax_{i})$ for all $r,k$. Then, by \ref{prop:fakelowenheimskolem}, we find a standard pmp action $\ax$ with $\s L_{rk}(\ax)=\s L_{rk}(\lim_{i\in \s U} \ax_i).$
\end{proof}

We can also conclude compactness for the space of (equivalence classes of) pmp hypergraphs under local-global convergence:

\begin{cor}
    For any sequence of pmp hypergraphs $\ip{\s H_i: i\in\N}$ with uniformly bounded degree, there is a pmp hypergraph $\s H$ and a subsequence of $\ip{\s H_i: i\in\N}$ which converges to $\s H$.
\end{cor}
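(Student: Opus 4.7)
The plan is to reduce this to the previous corollary by extracting a local-global Cauchy subsequence via a diagonal argument. The degree bound is uniform by hypothesis, so the only work is showing that for some subsequence, $\s L_{rk}(\s H_i)$ converges in the Hausdorff metric for every $r,k\in\N$ simultaneously.

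First I would fix $d$ uniformly bounding the degrees and $u$ the common uniformity (we may as well pass to a subsequence where all the $\s H_i$ have the same uniformity, since there are only countably many values of $u\leq d$). For each $r,k\in\N$, the set $\bb L(r,k,d,u)$ is finite, so $P_1\bigl(\bb L(r,k)\bigr)$ equipped with the total variation metric is a compact metric space. The space of nonempty closed subsets $K\bigl(P_1(\bb L(r,k))\bigr)$ equipped with the Hausdorff metric is then also compact and metrizable (this is the Blaschke/Vietoris theorem). Since each $\s L_{rk}(\s H_i)$ is a nonempty closed subset of $P_1\bigl(\bb L(r,k)\bigr)$, any sequence of such statistics has a Hausdorff-convergent subsequence.

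Next I would carry out the standard diagonal argument. Enumerate all pairs $(r,k)\in\N\times\N$ as $\ip{(r_n,k_n):n\in\N}$. Starting from the original sequence, extract a subsequence along which $\s L_{r_1,k_1}(\s H_i)$ Hausdorff-converges; from that subsequence, extract a further subsequence along which $\s L_{r_2,k_2}(\s H_i)$ Hausdorff-converges; and so on. The standard diagonal subsequence then has the property that $\s L_{r,k}(\s H_i)$ is Hausdorff-Cauchy for every $(r,k)$. This subsequence is therefore local-global Cauchy in the sense defined earlier.

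Finally, applying the preceding corollary (the Hatami--Lov\'asz--Szegedy-type completeness result) to this Cauchy subsequence yields a pmp hypergraph $\s H$ to which it local-global converges. No step is really an obstacle here: the combinatorial content is already packaged in the previous corollary, and the rest is just the fact that a countable product of compact metrizable spaces is sequentially compact, applied to the sequence $\bigl(\s L_{rk}(\s H_i)\bigr)_{r,k}$ viewed as an element of $\prod_{r,k} K\bigl(P_1(\bb L(r,k))\bigr)$.
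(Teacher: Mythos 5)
Your proposal is correct and takes essentially the same route as the paper: the paper likewise uses compactness of $P_1\bigl(\bb L(r,k)\bigr)$ and its Hausdorff hyperspace, views the family of local statistics as a point of $\prod_{r,k} K\bigl(P_1(\bb L(r,k))\bigr)$ to extract a local-global Cauchy subsequence (your diagonal argument just spells out that sequential compactness), and then applies the preceding completeness corollary.
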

\begin{proof}
    Since $\bb L(r,k)$ is finite, $P_1(\bb L(r,k))$ is compact, and so is its Hausdorff space $K\bigl(P_1(\bb L(r,k))\bigr)$. So, the family of local-statistics of $\s H$ lives in the compact space $\prod_{r,k} K\bigl(P_1(\bb L(r,k))\bigr)$. So, $\ip{\s H_i:i\in\N}$ must have a local-global Cauchy subsequence. And by the previous theorem, this sequence converges to some pmp hypergraph.
\end{proof}

It follows that any hypergraph parameter expressible in terms of the operations in $M(\ax)$ is continuous with respect to local-global convergence. More generally, any continuous first order sentence is continuous with respect to local-global convergence. For example, the spectral radius of the adjacency operator of a graph is continuous with respect to local-global convergence.

And lastly, we have a lemma for swapping out approximate solutions for exact solutions in equivalent hypergraphs analogous to Corollary \ref{cor:swap}:

\begin{thm}\label{thm:swapgraph}
    For any pmp hypergraph $\s H$ and $\nu\in \s L_{rk}(\s H)$, there is local-global equivalent hypergraph $\s G$ with and a labelling $f$ of $\s G$ so that $\nu=\mu^r(f, \s G)$. 
\end{thm}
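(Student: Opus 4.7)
The plan is to reduce the theorem to its action-level analogue, Corollary \ref{cor:swap}, via the marking/coding machinery from earlier in the section.

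First I would choose any marking $\ax$ of $\s H$ with the fixed group $\Gamma$, so that $\s H = \s H(\ax)$. By the proof of Proposition \ref{prop:coding}, there is a coding $c$ that turns an edge/vertex $[k]$-labelling of $\s H(\ax)$ into a vertex $[k']$-labelling of $\ax$ (for some $k'$), together with a map on measures $\nu \mapsto c(\nu)$ such that $\nu \in \s L_{rk}(\s H(\ax))$ iff $c(\nu) \in \s L_{rk'}(\ax)$. Applying Corollary \ref{cor:swap} to $\ax$ and $c(\nu)$ produces a standard pmp action $\bx \equiv_{lg} \ax$ together with a labelling $g$ of $\bx$ satisfying $\mu^r(g, \bx) = c(\nu)$. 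Setting $\s G = \s H(\bx)$, the corollary immediately following Proposition \ref{prop:coding}, applied in both directions, gives $\s G \equiv_{lg} \s H$.

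The remaining task is to decode $g$ into an edge/vertex labelling $f$ of $\s G$ so that $\mu^r(f, \s G) = \nu$. The coding $c$ records at each vertex $v$ its label along with, for each generator and order-sequence $(\gamma, \sigma)$, the label of the oriented edge traversed by $(\gamma, \sigma)$ at $v$. Inverting this requires that $g$ be \emph{consistent}, meaning that whenever two vertices lie on a common edge the edge labels read off at each vertex agree. By its very definition, $c(\nu)$ is supported on the rooted $1$-ball types that arise from actual edge/vertex labellings of $\s H(\ax)$, hence only on consistent local pictures. Since consistency is a measurable condition on $1$-neighborhoods of $\bx$, the identity $\mu^r(g, \bx) = c(\nu)$ forces $g$ to be consistent almost surely. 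One can then decode $g$ into a genuine edge/vertex labelling $f$ of $\s G$, and matching up the definitions of $c$ and of local statistics yields $\mu^r(f, \s G) = \nu$.

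The substantive content is the consistency observation; once it is clear that $g$ is almost surely a valid coding, everything else is a direct translation back through $c$. The entire argument rests only on already-established results (Proposition \ref{prop:coding}, its corollary, and Corollary \ref{cor:swap}) together with the fact that consistency of a coding is a local measurable property.
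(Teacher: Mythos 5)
Your proposal is correct and follows essentially the same route as the paper, whose proof is exactly "pick a marking for $\s H$, use Corollary \ref{cor:swap} and the coding from Proposition \ref{prop:coding}"; your write-up just makes explicit the decoding step and the consistency observation that the paper leaves implicit.
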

\begin{proof}
    Pick a marking for $\s H$ use Corollary \ref{cor:swap} and the coding from Proposition \ref{prop:coding}.
\end{proof}

\subsection{Ab\'ert--Weiss for hypergraphs}

Fundamental to the theory of local-global containment is the existence of a $\preccurlyeq_{lg}$-minimal element among graphs or actions with a given local geometry. (In fact, this minimal element is rather concrete, as we shall see). This is the Ab\'ert--Weiss theorem for actions and the Hatami--Lovasz--Szegedy theorem for graphs. Here we'll establish the same for hypergraphs.

Let's be a little more precise about what I mean by local geometry. We only define weak containment for actions of a fixed group, and the Ab\'ert--Weiss theorem says the Bernoulli shift is a minimal element for free actions. Tucker--Drob generalized this theorem to actions with a fixed stabilizer IRS. In other words, we have a minimal element once we fix the probability of seeing a certain relation among generators. For graphs, Hatami--Lovasz--Szegedy proved that the Bernoulli graphing over a fixed unimodular random graph is minimal. That is, once we fix the probability of seeing a certain graph as a finite neighborhood we can find a minimal element. We'll generalize this latter theorem to hypergraphs.

All of these generalizations follow the same pattern as the Ab\'ert--Weiss theorem. We define the minimal object with a given local geometry by enriching the local geometry with a sequence of iid coin flips at each vertex. Luzin's theorem tells us we can approximate any labelling of this Bernoulli-like object with a map $f$ that only depends on finitely many bits in some finite neighborhood of each vertex. Given any pmp graph (or hypergraph or action), we can take any Borel distance coloring with enough colors and randomly partition the colors to get a $2^n$-labelling, $\ell$, so that the local statistics are close to uniform (over the given local geometry). Then, plugging this labelling $\ell$ into the approximation $f$ gives us a new labelling with local statistics close to our target. 

\begin{dfn}
    For $d,u\in \N$ and compact metric space $X$, the space $\bb L(\infty, X, d,u)$ is the space of isomorphism classes rooted, $X$-labelled, $u$-uniform hypergraphs of degree at most $d$. We will typically suppress $d,u$ in the notation. We endow this space with the topology generated by the open sets \[N_{[G,v,g],n}:=\{[H,o,h]: (B_n(o), o, h)\cong (G,v,g)\}\] for $[G,v,g]\in \bb L(n,X)$. In other words, a sequence of hypergraphs converges if larger and larger neighborhoods of the labelled hypergraphs are isomorphic.

    We write $\bb L(\infty,d,u)$ for the space of isomorphism classes of rooted, unlabelled, $u$-uniform hypergraphs of degree at most $d$ equipped with the analogous topology. Again, we will typically suppress $d$ and $u$.
\end{dfn}

Note that, since we've fixed an upper bound on the degree, this space is compact. The local geometry of a hypergraph $\s H$ tells us how likely we are to see a given element of $\bb L(\infty, d,u)$ as a component in $\s H$.

\begin{dfn}
    For a pmp hypergraph $\s H$ with invariant measure $\mu$, the \textbf{local geometry} of $\s H$ is the measure $\mu_0$ on $\bb L(\infty, d, u)$ where, for $[H,o]\in \bb L(n, d,u)$  any isomorphism class of rooted unlabelled radius $n$ hypergraphs,
    \[\mu_0(N_{[H,o]})=\bb P\bigl( (B_n(x),x)\cong (H,o)\bigr).\]
\end{dfn}

Not all probability measures on $\bb L(\infty)$ arise as local geometries of pmp hypergraphs. Any local geometry will satisfy a mass transport principle inherited from the measure theoretic handshake lemma. For random graphs, this is the unimodularity condition. 

\begin{dfn}
    For any $k,u,d\in \N$, the associated \textbf{labelled Bernoulli hypergraph} is the Borel hypergraph, $\s B=(\bb L(\infty, 2^\N), B)$ where edges correspond to moving the root. That is, (recall than when $H=(X,E)$ is a hypergraph $E$ consists of unordered edges and $\vec H$ is the corresponding set of ordered edges)
    \begin{align*}([H_1,o_1, g_1],...,[H_u,0_u,g_u])\in \vec {\s B}:\lra & (\exists e=(e_1,...,e_u)\in \vec H_1)\; e_1=o_1\mbox{ and }\\ & \quad (\forall i) \;(H_i, o_i, g_i)\cong (H_1, e_i, g_1). \end{align*}

    When $\mu_0$ is the local geometry of some labelled pmp hypergraph, we write $\s B(\mu_0)$ for the the labelled Bernoulli hypergraph viewed as a pmp hypergraph with the invariant measure $\mu$ defined on cylinder sets as follows. For $\s A\subseteq \bb L(\infty,k\times 2^\N)$,
    \[(\mu\ltimes \lambda)(\s A)=\int_{[H,o]} \bb P_{\ell}([H,o,\ell]\in \s A) \;d\mu_0.\]
\end{dfn}
It doesn't matter if we mean that $\ell$ is a vertex or edge or vertex-and-edge labelling in the above definition. Since we have infinitely many independent coin flips at each point, we can spread these around the hypergraph as needed.

There are lots of self loops in $\s B$, but these all have measure $0$ in $\s B(\mu_0)$. Indeed, the set of labelled hypergraphs where each vertex has a unique label is measure $1$. Implicit in this definition is the following fact:

\begin{prop}
    If $\mu_0$ is the local geometry of some labelled pmp hypergraph, then $\s B(\mu_0)$ is a pmp hypergraph.
\end{prop}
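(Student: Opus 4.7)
The plan is to verify the definition of pmp hypergraph for $\s B(\mu_0)$. Borelness of $\vec{\s B}$ in $\bb L(\infty, 2^\N)^u$ is immediate, since edge membership is a countable union of clopen conditions about sufficiently large finite neighborhoods. So the substantive task is to check that the equivalence relation generated by $\vec{\s B}$ is pmp with respect to $\mu := \mu_0 \ltimes \lambda$.

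I would use the characterization recalled in the excerpt: it suffices to write the connectedness relation as a countable union of $\mu$-preserving involutions. The natural candidates are the ``recognize-and-re-root'' maps. For each finite rooted labelled hypergraph $(F, v, g_0)$ of degree at most $d$ with a distinguished edge $e \ni v$ and a target vertex $w \in e$, let $T_{F, v, w, g_0}$ be the Borel involution on $\bb L(\infty,2^\N)$ that detects the pattern $(F, v, g_0)$ on a ball of radius $|F|$ around the root and re-roots to $w$ (the inverse move doing the analogous thing at $w$). Arranged carefully (using the iid labels to almost surely rigidify finite neighborhoods, and enumerating patterns canonically), countably many of these involutions union to the full connectedness relation.

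To show each $T_{F, v, w, g_0}$ preserves $\mu$, observe that the $\mu$-mass of any such cylinder set factors as a $\mu_0$-weight on rooted shapes times an iid-Bernoulli probability for the labels. The Bernoulli factor is manifestly symmetric under the swap $v \leftrightarrow w$ within a fixed finite shape. The $\mu_0$ factor is symmetric because $\mu_0$ is, by hypothesis, the local geometry of a pmp hypergraph $\s H_0$: the measure-theoretic handshake lemma applied to the edge measure of $\s H_0$ implies that the frequency of seeing a shape $(F, v, \ldots, e, w)$ around a $\mu_0$-random rooted vertex equals the frequency of $(F, w, \ldots, e, v)$. Piecing these swaps together shows that each $T$ pushes $\mu$ to itself.

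The main obstacle is the bookkeeping needed to make the $T_{F, v, w, g_0}$ genuine Borel involutions with disjoint domains and to ensure they collectively generate connectedness. Care is needed to handle labelled automorphisms of finite patterns, but the iid labels break symmetries almost surely, so a canonical measurable linear ordering of vertices within each $\mu$-typical rooted labelled hypergraph lets one split the relation into well-defined pieces. Once this reduction is done, the invariance computation is just the transport of the handshake identity from $\s H_0$ up to its local geometry.
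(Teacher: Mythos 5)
Your outline is correct and is essentially the proof the paper intends: the paper's own ``proof'' is just a deferral to \cite{HLS}, and the HLS-style argument is exactly your mass-transport/re-rooting scheme, where the handshake identity satisfied by the local geometry of a pmp hypergraph makes each one-step re-rooting involution $(\mu_0\ltimes\lambda)$-preserving and the iid labels supply the rigidity needed to cut the relation into Borel involutions. The only remaining work is the routine bookkeeping you already flag, so there is no substantive gap.
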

The proof of the above is identical to \cite{HLS}. 

We will show that $\s B(\mu_0)$ is local-global minimal among pmp hypergraphs with local geometry $\mu_0$. We need a couple lemmas to turn the proof sketch above into an actual proof. First we'll show any labelling of $\s B(\mu_0)$ is approximated by a local map. Well, actually first we'll introduce some notation.
\begin{dfn}
    For $g: X\rightarrow Y^A$ and $F\subseteq A$, write $g_F$ for the function defined by
    \[g_F(x)=g(x)\res F.\] 
\end{dfn}

\begin{lem}
   For any labelling $\tilde f: \s B(\mu_0)\rightarrow [k]$ and $\epsilon>0$, there are $r, n\in\N$, an approximating labelling ${f:\s B(\mu_0)\rightarrow [k]}$, and a local rule $f_0: \bb L(r,2^n)\rightarrow [k]$ so that 
    \[(\mu_0\ltimes \lambda)(\{x: \tilde f(x)\not=f(x)\})<\epsilon\] and $f$ is given by the local rule
    \[(\forall [H,o,g]\in \s B(\mu_0))\; f([H,o,g])=f_0([B_r(0),o,g_{[0,n]}]) \] 
\end{lem}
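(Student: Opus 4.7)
The plan is to exploit the standard fact that $\s B(\mu_0)$ is a compact metric space whose Borel $\sigma$-algebra is generated by a countable algebra of ``local'' clopen cylinder sets that fix the rooted isomorphism type of the $r$-ball and the first $n$ coordinates of each label on it. Specifically, for each $r, n \in \N$ and each $[H, o, h] \in \bb L(r, 2^n)$, the set
\[C([H,o,h], r, n) := \bigl\{[G, o', g] \in \bb L(\infty, 2^\N) : [B_r(o'), o', g_{[0,n]}] = [H, o, h]\bigr\}\]
is clopen, and as $r,n$ vary these cylinders form a countable algebra generating the full Borel $\sigma$-algebra on $\bb L(\infty, 2^\N)$. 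In particular the cylinder algebra is dense in the Borel $\sigma$-algebra under the symmetric-difference pseudometric induced by $\mu_0 \ltimes \lambda$.

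First I would invoke this density on each of the $k$ preimages: for each $i \in [k]$, choose a finite union of local cylinders $A_i$ with $(\mu_0 \ltimes \lambda)(A_i \triangle \tilde f^{-1}(i)) < \epsilon/k$. Taking $r$ and $n$ to be the maximum of the depths appearing across all of the $A_i$, each $A_i$ becomes a union of depth-$(r,n)$ cylinders indexed by some $S_i \subseteq \bb L(r, 2^n)$, and each depth-$(r,n)$ cylinder lies either entirely inside $A_i$ or entirely disjoint from it. To turn the $S_i$ into a well-defined local rule, I would adjust them to a partition of $\bb L(r, 2^n)$: cylinders in exactly one $S_i$ keep their assignment; cylinders in several are assigned to any one of them arbitrarily; cylinders in none are assigned to $1$. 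This yields $f_0 : \bb L(r, 2^n) \to [k]$, and $f([G, o, g]) := f_0([B_r(o), o, g_{[0,n]}])$ is the desired local labelling.

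Finally, a short case analysis shows that whenever $f([G, o, g]) \neq \tilde f([G, o, g])$, the point lies in $A_j \triangle \tilde f^{-1}(j)$ for some $j$: the reassignment only produces errors when a cylinder is placed into an $A_j$ (contributing errors in $A_j \setminus \tilde f^{-1}(j)$) or into no $A_j$ (contributing errors in $\tilde f^{-1}(j) \setminus A_j$). A union bound then gives $(\mu_0 \ltimes \lambda)(\{f \neq \tilde f\}) \leq \sum_j \epsilon/k = \epsilon$. The only delicate ingredient is the disjointness bookkeeping in the previous paragraph; the real content is the density of the cylinder algebra in the measure algebra, which in turn reflects that labels come from an infinite product of independent coin flips over a known local geometry.
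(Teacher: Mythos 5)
Your argument is correct, but it takes a different route from the paper. The paper's proof is topological: by Lusin's theorem $\tilde f$ is continuous on a compact set of measure $1-\epsilon$, one extends continuously (Tietze) to all of $\bb L(\infty,2^\N)$, and then observes that a continuous $[k]$-valued function on this compact space depends only on finitely many label bits in a bounded neighborhood of the root --- where extracting a single uniform pair $(r,n)$ implicitly uses compactness, and the $[k]$-valued extension step needs a small repair (rounding, or zero-dimensionality of the space). Your proof is measure-algebraic instead: you approximate each level set $\tilde f^{-1}(i)$ by a finite union of clopen cylinders fixing the $r$-ball type and the first $n$ label bits, pass to a common refinement depth $(r,n)$, and resolve overlaps and gaps by hand to obtain a genuine local rule $f_0$ on $\bb L(r,2^n)$; the case analysis showing that every error point lies in some $A_j \triangle \tilde f^{-1}(j)$ is exactly right, and the union bound closes the argument (note each term is strictly below $\epsilon/k$, so the total is strictly below $\epsilon$, as the lemma requires). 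What your version buys is that it is more elementary and self-contained --- it needs only that the cylinders form a countable generating algebra, hence are dense in the measure algebra, and it produces the local rule $f_0$ and the uniform $(r,n)$ explicitly rather than extracting them from continuity; what the paper's version buys is brevity and the conceptual slogan that measurable labellings of the Bernoulli hypergraph are approximately continuous, i.e.\ approximately finitary factor maps. One point worth making explicit in your write-up is why the cylinders generate the Borel $\sigma$-algebra of the space of isomorphism classes: two rooted labelled hypergraphs of bounded degree whose truncated $r$-balls agree for all $r,n$ are isomorphic, by a compactness (inverse-limit of nonempty compact sets of partial isomorphisms) argument; with that noted, your proof is complete.
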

\begin{proof}
    Lusin's theorem says that $\tilde f$ is continuous on a measure $(1-\epsilon)$ compact subset of its domain, $E$. There is a continuous extension of $\tilde f\res E$ to all of $\bb L(\infty, 2^\N)$ by Tietze extension, say $f$. Being continuous on $\bb L(\infty, 2^\N, d, u)$ means exactly that $f$ depends only on finitely many bits in some finite neighborhood of the input.
\end{proof}

Second, we can approximate iid variables on any pmp hypergraph.

\begin{lem}
    For any tolerance $\epsilon>0$, radius $r\in\N$, label set $k\in \N$, and pmp hypergraph $\s H$ with local geometry $\mu_0$, there is a $k$-labelling of $\s H$ with local statistics within $\epsilon$ of the uniformly random iid $k$-labelling over $\mu_0$. That is there is $\ell: \s H\rightarrow [k]$ measurable with
    \[\bigl|\mu^r(\ell, \s H)([H,o,g])-\mu_0([H,o])k^{-|\dom(g)|}\bigr|<\epsilon\] for each $[H,o,g]\in \bb L(r,k)$. 
\end{lem}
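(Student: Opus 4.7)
The plan is to combine a Borel distance-coloring with a random recoloring.  First, the degree bound lets one apply \cite[Proposition 4.6]{KST} (or a greedy Borel argument) to the power hypergraph of radius $2r$ to produce a Borel proper coloring $c_0:X\rightarrow [N_0]$ such that any two vertices lying in a common radius-$r$ ball receive different colors; here $N_0$ depends only on $d$, $u$, and $r$.  Next, measurably split each class $c_0\inv(i)$ into $M$ pieces of equal $\mu$-measure, yielding a refined Borel coloring $c:X\rightarrow [N]$ with $N=N_0 M$ colors.  The distance-coloring property is preserved, and each color class now has measure at most $1/M$.

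Sample a uniformly random function $\chi:[N]\rightarrow [k]$ and set $\ell_\chi:=\chi\circ c$.  Since $c$ is injective on every radius-$r$ ball, $\ell_\chi\res B_r(x)$ is distributed as an iid uniform $[k]$-labelling of $B_r(x)$, so for each $[H,o,g]\in\bb L(r,k)$,
\[\bb E_\chi\bigl[\mu^r(\ell_\chi,\s H)([H,o,g])\bigr]=\mu_0([H,o])\cdot k^{-|\dom(g)|}\]
(with the obvious accounting for labellings inside an isomorphism class).  For a second-moment estimate, observe that the covariance of the indicators $\bf 1[[B_r(x),x,\ell_\chi]=[H,o,g]]$ at two points $x,y\in X$ vanishes whenever $c(B_r(x))\cap c(B_r(y))=\emptyset$, since then the two labellings depend on disjoint coordinates of $\chi$.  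The remaining pairs have $\mu\otimes\mu$-measure at most $V/M$, where $V$ is a uniform bound on the size of an $r$-ball (a function of $d$, $u$, $r$).  Expanding $\mu^r(\ell_\chi)([H,o,g])$ as a double integral, this gives $\operatorname{Var}_\chi\bigl(\mu^r(\ell_\chi)([H,o,g])\bigr)=O(V/M)$.  Because $\bb L(r,k)$ is finite, Chebyshev plus a union bound over isomorphism classes produces, for $M$ sufficiently large in terms of $\epsilon$ and $|\bb L(r,k)|$, some realization $\chi$ whose corresponding $\ell=\ell_\chi$ simultaneously achieves all of the desired $\epsilon$-bounds.

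The only nontrivial ingredient is the variance step: the distance-coloring is what kills correlations between far-apart indicators, and the refinement into many tiny color classes is what makes the palette-overlap region $\mu\otimes\mu$-small.  The remaining pieces --- Borel chromatic number of bounded-degree graphs and measurable equal-measure partitions of a Borel set --- are standard.
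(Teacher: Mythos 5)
Your proof is correct and follows essentially the same route as the paper: a Borel distance-$2r$ coloring with color classes of small measure, composed with a uniformly random map from colors to labels, with the expectation computed from local injectivity and the variance controlled by the small measure of pairs whose neighborhoods share colors, finished by Chebyshev. Your explicit union bound over the finite set $\bb L(r,k)$ is a minor tidying of the same second-moment argument the paper uses.
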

\begin{proof}
Let $c>>k,r$ and let $\ell_0: \s H\rightarrow [c]$ be a Borel labelling of $\s H$ so that 
\begin{enumerate}
    \item any two vertices within distance $2r$ of each other get different colors 
    \item each color class has measure less than $\epsilon/(du)^{2r}.$
\end{enumerate} Such a labelling exists by standard coloring arguments, and the second point ensure that if $x,y$ are chosen independently at random from $\s H$, then \[\tag{$*$} \bb P((\exists x'\in B_{2r}(x), y'\in B_{2r}(y))\;\ell_0(x')=\ell_0(y'))<\epsilon.\] Let $\mathbf{s}:[c]\rightarrow [k]$ be uniformly random. I claim that, for each $[H,o,g]\in \bb L(r,k)$, 
\[\bb P_{\bf s}(|\mu^r(\bf s\circ \ell_0)([H,o,g])-\mu_0(N_{[H,o]})k^{-|\dom(g)|}|<\epsilon)> O(\epsilon).\] (So, in particular, there is some relabelling $s$ so that $s\circ \ell_0$ will serve as $\ell$ in the theorem). We'll use a standard second moment method argument.

First, let's compute the expected value of $\mu^r(\mathbf{s}\circ\ell_0)([H,o,g])$. Define
\[\bf\chi(x)=\left\{\begin{array}{ll} 1 & (B_r(x),x,\bf s\circ \ell_0)\cong (H,o,g) \\ 0 & \mbox{otherwise} \end{array}\right.\] So,
\[\bb E\bigl(\mu^{r}(\mathbf{s}\circ \ell_0)([H,o,g])\bigr)=\bb E(\int_{x}\mathbf\chi(x)\; d\mu)=\int_{x}\bb E(\mathbf\chi(x)) \; d\mu.\]
And since $\mathbf{s}\circ \ell_0(x)$ is independent of $\mathbf{s}\circ \ell_0(y)$ for $x,y$ within $r$ of each other, for each $x\in \s H$
\[ E(\mathbf\chi(x))= \mu_0([H,o])k^{-|H|}.\]

Second, we bound the variance of $\mu^r(\mathbf{s}\circ \ell_0)([H,o,g])$. We have
\[\bb E\bigl(\mu^{r}(\mathbf{s}\circ \ell_0)([H,o,g])^2\bigr)=\bb E(\int_{(x,y)}\mathbf\chi(x)\chi(y) d(\mu\times \mu))=\int_{(x,y)}\bb E(\mathbf\chi(x)\mathbf\chi(y)) d(\mu\times \mu).\] By ($*$), $\mathbf{s}\circ \ell_0(x)$ and $\mathbf{s}\circ \ell_0(y)$ are independent at all points within $r$ of $x$ or $y$ for a ${(1-\epsilon)}$-fraction of points in $X^2$. So, this works out $(1-\epsilon)(\mu_0([H,o])k^{-|\dom(g)|})^2+O(\epsilon).$ Thus, the variance is $O(\epsilon).$ Thus, theorem follows by Chebyshev's inequality.
\end{proof} 

Now, we can combine the previous two lemmas.

\begin{thm}
    If $\s H$ is a pmp hypergraph with local geometry $\mu_0$, then the associated Bernoulli hypergraph is local-global contained in $\s H$: $\s B(\mu_0)\preccurlyeq_{lg} \s H$.
\end{thm}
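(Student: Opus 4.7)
The plan is to combine the two preceding lemmas in the way foreshadowed by the sketch at the start of this subsection. Fix $r,k\in\N$ and an arbitrary $\nu\in \s L_{r,k}(\s B(\mu_0))$. It suffices to show that $\nu$ can be $\epsilon$-approximated in total variation by some element of $\s L_{r,k}(\s H)$ for every $\epsilon>0$, since $\s L_{r,k}(\s H)$ is closed by definition.

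First I pick a measurable labelling $\tilde f:\s B(\mu_0)\rightarrow [k]$ whose $r$-local statistics lie within $\epsilon/3$ of $\nu$. Applying the first lemma to $\tilde f$ (with tolerance $\epsilon/3$) produces integers $r_0,n$, a local rule $f_0:\bb L(r_0,2^n)\rightarrow [k]$, and a labelling $f=f_0\circ (\;\cdot\;)_{[0,n]}$ of $\s B(\mu_0)$ whose $r$-local statistics are within $\epsilon/3$ of $\mu^r(\tilde f, \s B(\mu_0))$, hence within $2\epsilon/3$ of $\nu$. Since $f_0$ reads only $n$ bits in the radius-$r_0$ ball, computing the radius-$r$ neighborhood of $f$ at a vertex $x$ requires knowing the bits in a ball of radius $R:=r+r_0$ around $x$; so there is a fixed continuous map $\Phi:P_1(\bb L(R,2^n))\to P_1(\bb L(r,k))$ taking a measure on $R$-local pictures of $2^n$-labellings to the induced measure on $r$-local pictures of the $[k]$-labelling obtained by applying $f_0$. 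Crucially, if the underlying $2^n$-labelling is iid uniform over local geometry $\mu_0$, then $\Phi$ outputs exactly $\mu^r(f,\s B(\mu_0))$.

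Now apply the second lemma to $\s H$ at radius $R$ and label set $[2^n]$, with tolerance $\delta$ small enough that $\Phi$-images of measures within $\delta$ of the iid uniform $2^n$-labelling over $\mu_0$ are within $\epsilon/3$ of $\Phi$ of the iid uniform measure in total variation (this is possible because $\bb L(R,2^n)$ is finite, so $\Phi$ is continuous in total variation). This yields a measurable $\ell:\s H\rightarrow [2^n]$ whose $R$-local statistics are $\delta$-close to the iid-uniform $2^n$-local statistics over $\mu_0$. Define $g:\s H\rightarrow [k]$ by $g(x)=f_0([B_{r_0}(x),x,\ell])$. Then $\mu^r(g,\s H)=\Phi(\mu^R(\ell,\s H))$, so by choice of $\delta$ this is within $\epsilon/3$ of $\Phi$ of the iid uniform measure, which equals $\mu^r(f,\s B(\mu_0))$. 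Combining the three $\epsilon/3$ approximations, $\mu^r(g,\s H)$ is within $\epsilon$ of $\nu$, and $\mu^r(g,\s H)\in \s L'_{r,k}(\s H)\subseteq \s L_{r,k}(\s H)$.

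The main obstacle is the bookkeeping in the second step: one must be careful that the local rule used to build $g$ on $\s H$ really depends only on the same finite data as on $\s B(\mu_0)$, so that a single continuous map $\Phi$ governs both situations. This is where fixing the local geometry $\mu_0$ is essential, because the marginal distribution on underlying rooted hypergraph isomorphism types is the same on both sides; only the labels differ, and the second lemma is precisely what matches the distribution of the labels. Everything else is soft: closedness of $\s L_{r,k}(\s H)$, continuity of $\Phi$ on a finite space, and the standard triangle inequality.
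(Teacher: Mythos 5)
Your proposal is correct and follows essentially the same route as the paper's proof: approximate the given labelling of $\s B(\mu_0)$ by a local rule $f_0$ via the first lemma, use the second lemma to produce a near-iid $2^n$-labelling $\ell$ of $\s H$, and push it through the precomposition map (the paper's $\Phi_{f_0}$) to get $f_0\circ\ell$ with the desired $r$-local statistics. The only differences are notational (your $r,r_0$ are swapped relative to the paper) and your slightly more explicit $\epsilon/3$ bookkeeping, which the paper handles with $O(\epsilon)$ factors.
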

\begin{proof}
Let $\s H$ be any pmp hypergraph with invariant measure $\mu$. Fix any $k$-labelling $f$ of $\s B(\mu_0)$, radius $r_0$, and $\epsilon>0$. We want to approximate the local statistics of $f$ with a labelling of $\s H$. The idea is clear: plug a random labelling of $\s H$ into a local approximation of $f$. Let's check this works.

For $f_0: \bb L(r,2^n)\rightarrow [k]$, define $\Phi_{f_0}$ to be the precomposition map:
\[\Phi_{f_0}:\bb L(r+r_0, 2^n)\rightarrow \bb L(r_0,k)\]
\[[H,o,g]\mapsto[B_{r_0}(o),o,f_0\circ g].\] Note that we need to know $g$ to radius $r+r_0$ to learn $f_0\circ g$ to radius $r_0$.

Let $\ell$ be the canonical iid $2^\N$ labelling of $\s B(\mu_0)$. So, for a vertex $[H,o,g]\in \s B(\mu_0)$, 
   \[\tilde \ell([H,o,g])=g(o)\] and for an edge $e=([H,o_1,g],...,[H,o_u,g])$,
   \[\tilde \ell(e)=g(o_1,...,o_u).\] The approximation lemma above tells us we can find $r,n\in\N$ and $f_0: \bb L(r, 2^n)\rightarrow [k]$ so that, for any $[H,o,g]\in \bb L(r_0,k)$,
\begin{align*}
    \mu^{r_0}(f, \s B(\mu_0)) & =(1+O(\epsilon))\mu^{r_0}(f_0\circ \ell_n,\s B(\mu_0))([H,o,g])\\
    &= (1+O(\epsilon)) \mu^{r+r_0}(\ell_n, \s B(\mu_0))(\Phi_{f_0}\inv([H,o,g]))\\
    & = (1+O(\epsilon)) \sum_{[H',o',g']\in \Phi_{f_0}\inv[H,o,g]} \mu_0([H',o'])k^{-|\dom(g')|}.
\end{align*}
The lemma just above says that we can find some labelling $\ell_0$ of $\s H$ so that, for $[H',o',g']$,
\[\mu^{r_0+r}(\ell_0, \s H)([H',o',g'])=\bigl(1+O(\epsilon)\bigr)\mu_0([H',o'])k^{-|\dom(g')|}.\] So,
\begin{align*}
    \mu^{r_0}(f_0\circ \ell_0, \s H) & = \mu^{r+r_0}(\ell_0, \s H)(\Phi_{f_0}\inv([H,o,g]))\\
    & =\bigl(1+O(\epsilon)\bigr)\sum_{[H',o',g']\in \Phi_{f_0}\inv[H,o,g]}\mu_0([H',o'])k^{-|\dom(g')|}\\
    & = \bigl(1+O(\epsilon)\bigr)^2\mu^{r_0}(f, \s B(\mu_0))
\end{align*} Thus, $f_0\circ \ell_0$ approximates $f$ as desired.
\end{proof}

The utility of these theorems depends on how easy it is to reason about $\s B(\mu_0).$ Luckily, this is pretty straightforward. In the case where $\mu_0$ is a Dirac mass concentrated on a vertex transitive graph $G$, measurable labellings of $\s B(\mu_0)$ are the same as factor of iid labellings of $G$. More generally, a Borel map on $\bb L(\infty, 2^\N)$ corresponds to a global family of automorphism equivariant random variables on countable graphs.

\begin{dfn}
    A family of $X$-valued random variables $\ip{\bf f_G: G\in \bb L(\infty, d, u)}$ with $\bf f_G$ of domain $[0,1]^G$, is \textbf{automorphism equivariant} if 
    \[[(G,o,\ell)]= [(H,v,g)]\rightarrow [(G,o, \bf f_G(\ell))]=[(H,v, f_H(g))]\]

    We call such a family Borel if
    \[[G,v,\ell]\mapsto [G,v,\bf f_G(\ell)] \] is a Borel map from $\bb L(\infty, d,u,[0,1])$ to $\bb L(\infty, d, u, X)$.
\end{dfn}

The following proposition amounts to Currying some variables:

\begin{prop}
    For a Borel $X$-labelling of $\s B(\mu_0)$ $F$, the family of random variables $\ip{\bf f_G: G\in \bb L(\infty, d,u)}$ given by
    \[\bf f_G(\ell)(v)=F([G,v,\ell])\] is Borel and automorphism equivariant. And every automorphism equivariant Borel family of random variables is associated to a labelling of $\s B(\mu_0)$ like this.
\end{prop}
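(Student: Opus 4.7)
The statement is essentially a naturality/Currying claim between two presentations of the same data, so the plan is to check well-definedness, equivariance, and Borelness in each direction.

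First I would handle the forward direction. Given a Borel map $F:\bb L(\infty,2^\N)\to X$, define $\bf f_G(\ell)(v):=F([G,v,\ell])$ on each countable hypergraph $G$ of the right degree/uniformity. Automorphism equivariance is immediate: if $\phi$ witnesses $[G,o,\ell]=[H,v,g]$, then for every vertex $w$ of $G$ one has $[G,w,\ell]=[H,\phi(w),g]$, so $\bf f_G(\ell)(w)=F([G,w,\ell])=F([H,\phi(w),g])=\bf f_H(g)(\phi(w))$, which is exactly the equivariance condition. For Borelness of $[G,v,\ell]\mapsto[G,v,\bf f_G(\ell)]$ from $\bb L(\infty,2^\N)$ to $\bb L(\infty,X)$, it suffices to check preimages of the basic opens $N_{[H_0,o_0,g_0],n}$. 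Such a preimage is the set of $[G,v,\ell]$ whose $n$-ball around $v$, after relabelling each $w\in B_n(v)$ by $F([G,w,\ell])$, has isomorphism type $[H_0,o_0,g_0]$. The re-rooting map $[G,v,\ell]\mapsto[G,w,\ell]$ along any word in the generators is continuous on $\bb L(\infty,2^\N)$, so composing with the Borel map $F$ makes each relabelled value a Borel function of $[G,v,\ell]$, and the preimage is Borel.

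For the converse, given a Borel automorphism-equivariant family $\ip{\bf f_G}$, define $F([G,v,\ell]):=\bf f_G(\ell)(v)$. Well-definedness of $F$ on isomorphism classes is precisely the equivariance hypothesis: an isomorphism $\phi:(G,v,\ell)\to(H,v',g)$ forces $\bf f_G(\ell)(v)=\bf f_H(g)(v')$. Borelness of $F$ reduces to Borelness of the family: $F$ is the composition of the Borel map $[G,v,\ell]\mapsto[G,v,\bf f_G(\ell)]$ with the continuous evaluation $[G,v,h]\mapsto h(v)$ (well-defined into $X$ because the root label is preserved by isomorphisms of rooted labelled hypergraphs).

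Finally, I would note that these two constructions are inverse: starting from $F$, producing $\bf f_G$, and then reading off $\bf f_G(\ell)(v)$ gives back $F([G,v,\ell])$ by definition; and starting from $\ip{\bf f_G}$ the round trip recovers $\bf f_G(\ell)(v)$. The only point that takes any care is checking Borelness of the re-rooting within a bounded ball and unpacking exactly what the topology on $\bb L(\infty,X)$ requires; everything else is formal.
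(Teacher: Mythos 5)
Your argument is correct and matches the paper's treatment: the paper gives no proof beyond the remark that the proposition ``amounts to Currying some variables,'' and your verification of well-definedness (equivariance forces the root label to be an isomorphism invariant), equivariance, and Borelness in both directions is exactly that routine unpacking. One small wording slip: elements of $\bb L(\infty,d,u)$ carry no marking, so ``re-rooting along a word in the generators'' should be phrased instead as re-rooting at the finitely many vertices of the bounded ball (formalized, e.g., by working with canonical codes on vertex set $\N$, where re-rooting is Borel), after which your Borelness check of preimages of the basic open sets goes through as written.
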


Suppose $F$ induced a family of random variables $\bf f_G$ as above. By definition, the probability of seeing some label $a$ is at a vertex is:
\[\mu(\{[G,r,l]: F([G,r,l])=a\})=\int_{[G,v]} \bb P(\bf f_G(\ell)(r)=a)\;d\mu_0.\]

More generally, for some event $\s A$ in $\bb L(\infty,d,u,X)$, if we write $\tilde G$ for the component of $[G,r,\ell]$ in $\s B(\mu_0)$ and $\tilde r$ for $[G,r,\ell]$, then %bad notation!
\[\mu(\{[G,r,\ell]: [\tilde G, \tilde r, F]\in \s A\})=\int_{[G,r]} \bb P([G,r,\bf f_{G}(\ell)]\in \s A)\;d\mu_0.\] In particular, we can bound this above and below by probability $\bf f_G(\ell)$ produces an $\s A$-labelling given the constraints of the local geometry.

\begin{dfn}
    For a measure $\mu$, write $\supp(\mu)$ for its \textbf{support}. In particular, for a local geometry $\mu_0$, $\supp(\mu_0)$ is the set of hypergraphs whose neighborhoods all appear with positive probability in $\mu_0$.
\end{dfn}

\begin{prop}
    For any event $\s A$ in $\bb L(\infty,d,u, X)$, 
    \[\min_{[G,r]\in \supp(\mu_0)} \bb P([G,r,\bf f_G(\ell)]\in \s A)\leq \mu(\{[G,r,\ell]: [\tilde G, \tilde r, F]\in \s A\})\] and \[\mu(\{[G,r,\ell]: [\tilde G, \tilde r, F]\in \s A\}) \leq \max_{[G,r]\in \supp(\mu_0)} \bb P([G,r,\bf f_G(\ell)]\in \s A) \]
\end{prop}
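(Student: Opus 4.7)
The plan is to reduce this proposition directly to the integral identity displayed immediately before it and invoke the most basic monotonicity of integration. Setting $h([G,r]) := \bb P([G,r,\bf f_G(\ell)]\in \s A)$, that identity reads
\[\mu(\{[G,r,\ell]: [\tilde G, \tilde r, F]\in \s A\}) \;=\; \int h([G,r])\;d\mu_0([G,r]).\]
The function $h$ takes values in $[0,1]$ and is measurable in $[G,r]$ (it is assembled from the Borel family $\bf f_G$ of the preceding proposition), and $\mu_0$ is a probability measure concentrated on $\supp(\mu_0)$.

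First I would observe that the integral is unchanged by restricting to $\supp(\mu_0)$, since $\mu_0$ assigns zero mass to its complement. On $\supp(\mu_0)$ we have the trivial pointwise sandwich
\[\min_{[G,r]\in \supp(\mu_0)} h([G,r]) \;\leq\; h([G,r]) \;\leq\; \max_{[G,r]\in \supp(\mu_0)} h([G,r]).\]
Integrating this against $\mu_0$ and using $\mu_0(\supp(\mu_0))=1$ produces the two claimed inequalities simultaneously.

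There is essentially no obstacle: the whole statement is just monotonicity of integration of a bounded measurable function against a probability measure, filtered through the Currying identity that describes labellings of $\s B(\mu_0)$. The only minor issue worth a line is whether the extrema are actually attained. Since we have fixed an upper bound on the degree, $\bb L(\infty, d, u)$ is compact and $\supp(\mu_0)$ is closed in it, hence compact, so the min and max are realized whenever $h$ is continuous. For a general measurable event $\s A$ one can either approximate $\s A$ by events depending on a finite-radius neighborhood (on which $h$ is locally constant and hence continuous) or interpret min/max as essential infimum and supremum over $\mu_0$; the argument is unaffected either way.
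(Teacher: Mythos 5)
Your proposal is correct and is essentially the paper's intended argument: the proposition is stated as an immediate consequence of the displayed integral identity $\mu(\{[G,r,\ell]: [\tilde G, \tilde r, F]\in \s A\})=\int \bb P([G,r,\bf f_G(\ell)]\in \s A)\,d\mu_0$, bounding the integrand by its extrema over $\supp(\mu_0)$. Your extra remark about attainment of the min and max (or reading them as essential inf/sup) is a reasonable clarification but not a departure from the paper's route.
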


As a simple example, we can show that if $\mu_0$ is any $d$-regular local geometry on graphs ($u=2$), then $\s B(\mu_0)$ has a measurable independent set of measure at least $1/(4d)$: Let $\bf f_G$ be $\{0,1\}$ valued with $\bf f_G(v)=1$ iid for every $v$ with probability $1/d$, then define our independent set by $v\in A$ if $\bf f_G(v)=1$ and for any neighbor of $u$ of $v$, $\bf f_G(u)=0$. For any $[G,v]$ in the support of $\mu$, $v$ has exactly $d$ neighbors, so the probability $v$ is $A$ is $\frac{1}{d}(1-\frac{1}{d})^d\geq \frac{1}{4d}.$ 

\section{Trivial CSPs}

Our first application is to the theory of constraint satisfaction problems, or CSPs. The CSP associated to a finite relational structure $\s D$ is the following problem: given a structure $\s X$ in the same signature as $\s D$, determine if there is a homomorphism from $\s X$ to $\s D$. In this context, we call $\s X$ an instance of $\s D$. For a fixed $\s D$, this gives a rich family of problems with a beautiful intrinsic theory of complexity.

In descriptive set theory, we often want to understand versions of CSPs where the instance $\s X$ is definable-- Borel, measurable, OD, etc. In \cite{csps}, the author determines which are the structures $\s D$ where the Borel CSP is no more complicated than the classical CSP:

\begin{thm}[{\cite[Theorems 5.2, 5.7]{csps}}]\label{thm:thesisthm}
    For a finite relational structure $\s D$, the following are equivalent:
    \begin{enumerate}
        \item $\s D$ is width-1
        \item For any Borel $\s X$, there is Borel homomorphism from $\s X$ to $\s D$ iff there is a homomorphism from $\s X$ to $\s D$.
    \end{enumerate}
\end{thm}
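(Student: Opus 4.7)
My plan is to prove the equivalence via the standard algebraic characterization of width-1 structures (Feder--Vardi, Dalmau--Pearson): $\s D$ is width-1 if and only if it admits totally symmetric polymorphisms of all arities, equivalently the arc-consistency algorithm decides the associated CSP. Once this characterization is in hand, the two directions become asymmetric in difficulty.

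For $(1) \Rightarrow (2)$, suppose $\s D$ is width-1 and $\s X$ is a Borel instance that admits some (a priori non-Borel) homomorphism $h:\s X\to \s D$. The first step is to run arc-consistency in a Borel manner, producing a Borel map $P:X\to \pow{D}\sm\{\emptyset\}$ assigning to each variable the set of currently-plausible labels. Arc-consistency is a decreasing fixed-point computation that stabilizes after countably many iterations, and each step is Borel, so $P$ is Borel. The existence of $h$ guarantees $h(v)\in P(v)$ everywhere, so $P$ is globally nonempty and locally consistent. The second step is to collapse $P$ to an honest labelling: fix a totally symmetric polymorphism $t:D^{|D|!}\to D$ of $\s D$ and set $g(v)=t(P(v))$, where symmetry makes the output depend only on $P(v)$ as a set. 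Then $g$ is Borel, and arc-consistency plus the symmetry of $t$ ensures that $g$ is a homomorphism.

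For $(2) \Rightarrow (1)$, I would argue contrapositively. If $\s D$ is not width-1 then some arity $n$ fails to admit a totally symmetric polymorphism, and this algebraic failure hands you a finite instance $\s Y$ that is arc-consistent yet unsatisfiable after symmetrization. The goal is to promote $\s Y$ to a Borel instance $\s X$ which is classically solvable but admits no Borel solution. The paradigm is graph coloring: for $\s D$ a complete graph, the obstruction comes from acyclic Borel graphs of arbitrarily large Borel chromatic number, whose existence is due to Marks via a Baire category argument on free actions of free products. In general, one builds $\s X$ as the Schreier hypergraph of a suitable free action with constraints sampled locally from $\s Y$: the underlying hypergraph is acyclic (so locally nothing obstructs a classical solution), but a Baire-category/game-theoretic argument forces any Borel labelling to violate a constraint somewhere.

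The main obstacle is this reverse direction: one needs a uniform construction that, starting only from the algebraic failure of total symmetry in $\s D$, produces both (a) an acyclic Borel constraint hypergraph, so that a classical solution is available by a compactness/tree argument, and (b) a combinatorial obstruction (in the style of Marks' game) that rules out Borel solutions. The delicate point is ensuring that the finite gadget extracted from the polymorphism failure can be wired into a free-product Schreier hypergraph without introducing cycles and without providing a Borel escape. Everything else is routine bookkeeping once the correct Borel hypergraph with the correct local structure is in hand.
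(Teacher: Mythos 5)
You should first note that this paper does not actually prove the quoted theorem: it is imported from \cite{csps}, and Section 3 here only sketches the $(2)\Rightarrow(1)$ strategy. Your outline parallels that sketch, but both of your directions have gaps as written. For $(1)\Rightarrow(2)$, your assertion that arc-consistency "stabilizes after countably many iterations, and each step is Borel" is unjustified at the level of generality the theorem requires. In a general Borel instance a single variable may occur in uncountably many constraints, so the statement "value $a$ has no supporting tuple in some constraint at $v$" is the projection of a Borel set, hence a priori only analytic, and the propagation of deletions need not close off at stage $\omega$. This is exactly the difficulty the paper flags when it remarks that the measurable case is simpler "than in the Borel case where degrees may be uncountable": in the pmp setting Luzin--Novikov gives a countable Borel enumeration of the constraints at each point, which is what makes the naive iteration Borel. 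In the Borel setting, making arc-consistency (or some substitute for it) definable is the substantive content of this direction, not routine bookkeeping.

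For $(2)\Rightarrow(1)$ you have the right shape — an acyclic Borel instance that is classically solvable but has no Borel solution, built from the failure of width 1 — but you explicitly leave open the step that the cited proof actually supplies. The missing ingredient is Lemma \ref{lem:acyclicdoodad}: if $\s D$ is not width-1 there is a pp-definable relation $R$ on $D$ with no constant tuple such that every acyclic instance of $(D,R)$ has a solution. Given that, one takes an acyclic Borel $u$-uniform hypergraph of Borel chromatic number greater than $|D|$ (for uniformity $u\geq 3$ this comes from a Baire category argument on the shift of a free product; for graphs it is Marks' determinacy theorem, so your attribution of the category argument to Marks conflates two different techniques) and turns its hyperedges into $R$-constraints: acyclicity plus the lemma yields a classical solution, while any Borel $f:X\to D$ must be constant on some hyperedge and hence violates $R$, since $R$ contains no constant tuple. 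Your proposed gadget — a finite arc-consistent but unsatisfiable instance "sampled locally" along the hypergraph — is not obviously adequate: arc-consistency of the gadget gives no mechanism forcing an arbitrary Borel labelling into a violation, whereas "no constant tuples" plus a monochromatic edge is precisely such a mechanism, and it is also what makes the glued structure a lift of a fixed unsolvable instance so that classical solvability can be checked. As it stands, the reverse direction is a plan whose hardest component is acknowledged but not carried out, so the proposal does not yet constitute a proof.
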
 There is a deep literature on width-1 structures in computer science, and they can be defined in a number of ways. A combinatorial definition is that $\s D$ is with-1 if an instance $\s X$ is solvable exactly when every acyclic lift of $\s X$ is solvable. Lemma \ref{lem:acyclicdoodad} gives a slightly more detailed version this. The theorem above says we think of these structures as being trivial from the point of view of Borel combinatorics.

We can use the limit theory for hypergraphs to sharpen this result in two ways. First, we can add to this list of equivalences that $\s D$ is measure theoretically trivial.  Second, we can extract from this theorem a purely finitary consequence giving yet another characterization of width-1 structures.

\subsection{Measurable CSPs}
Recall that a pmp structure is a family of pmp relations on a standard measure space, that is, a family of relations which only contain edges between $E$-related vertices for some pmp equivalence relation $E$. We can define local-global statistics, convergence, and equivalence exactly as for hypergraphs. And, we can define ultraproducts using generating maps to code the relations. Compactness of the space local-global convergence follows as before. We are interested in measurable CSPs. 

\begin{dfn}
    For $\s D$ a finite relational structure and $\s X$ a pmp structure in the same signature, a \textbf{measurable solution} is a measurable map which is a homomorphism when restricted to a measure 1 set. 
\end{dfn}

Since Borel maps are always measurable and any pmp structure differs from a Borel structure by measure 0, the following is a simple corollary of Theorem \ref{thm:thesisthm}.

\begin{cor}\label{cor:width1impliestrivial}
    If $\s D$ is width-1, every pmp $\s X$ with a homomorphism to $\s D$ has a measurable homomorphism to $\s D$.
\end{cor}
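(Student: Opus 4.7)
The plan is to observe that this follows almost immediately from Theorem~\ref{thm:thesisthm} by unpacking definitions. By the definition given earlier in the paper, a pmp structure on $(X,\mu)$ consists of Borel relations $R \subseteq [E]^n$ for some pmp equivalence relation $E$ on a standard probability space. So the underlying set of $\s X$ equipped with these relations is already a standard Borel structure in the signature of $\s D$ (which happens to carry an extra probability measure that we are free to ignore for this argument).

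First I would note that, by hypothesis, $\s X$ admits some homomorphism to $\s D$. Viewing $\s X$ purely as a Borel structure and applying Theorem~\ref{thm:thesisthm} to the width-1 structure $\s D$, we obtain a Borel homomorphism $h: \s X \to \s D$. Then I would observe that $h$ is in particular measurable (as a Borel map into a finite discrete target), and that $h$ is a homomorphism on all of $X$, hence a fortiori on a set of measure~$1$. Thus $h$ satisfies the definition of a measurable solution.

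The hard part is, essentially, nonexistent: the only potential subtlety is verifying that the hypotheses of Theorem~\ref{thm:thesisthm} really apply to the Borel skeleton of a pmp structure, and this is immediate from the definition of pmp relation as a Borel set. No invariance, no null-set manipulation, and no use of the limit machinery is required; the corollary is simply the observation that measurable triviality is a weaker property than Borel triviality, and the latter has already been established.
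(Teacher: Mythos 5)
Your proposal is correct and is essentially the paper's own argument: the paper likewise deduces the corollary directly from Theorem~\ref{thm:thesisthm}, noting that Borel maps are measurable and that a pmp structure is (up to a null set) a Borel structure. No further commentary is needed.
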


We could also prove this directly (a little more simply than in the Borel case where degrees may be uncountable). For readers familiar with width-1 structures, we can carry out the usual arc-consistence algorithm in a measurable way because the Luzin--Novikov theorem gives us a measurable way of listing out the relations each point sees.

\subsection{When does solvable imply measurably solvable?}

The converse of Corollary \ref{cor:width1impliestrivial} just above is also true. To prove this, we'll use a lemma from the proof of Theorem \ref{thm:thesisthm} and a local-global limit construction.

Roughly, the implication from (2) to (1) in Theorem \ref{thm:thesisthm} goes as follows: $\s D$ is width-1 means that any instance $\s X$ of $\s D$ has a solution iff any acyclic lift of $\s X$ has a solution. In \cite{csps}, this is sharpened to say that if $\s D$ is not width-1, then there is there is an unsolvable instance $\s X$ so that any acyclic lift has a solution, but there is a vertex $v\in X$ and an acyclic lift $\s Y$ which has no solution which is constant on the fibre of $v$. Then by gluing copies of $\s Y$ together along an acyclic hypergraph, we always end up with a lift of $\s X$, so with a structure that has a solution. But if we use a Borel acyclic hypergraph with very high Borel chromatic number the result will not have a Borel solution. Such hypergraphs can be found using Baire category arguments.

So, to prove the measurable analogue, we need an acyclic pmp hypergraph with arbitrarily high measurable chromatic number. It turns out we can take a local-global limit of a high enough degree random regular hypergraph. The next lemma captures the combinatorial heart of the Borel argument:

\begin{lem}[{\cite[Lemma 5.6]{csps}}]\label{lem:acyclicdoodad}
    If $\s D$ is not width-1 there is a relation $R$ on $D$ and which has the following properties:
    \begin{enumerate}
        \item There is some conjunction of relations from $\s D$, $S$ so that \[R(x_1,...,x_n)=(\exists z_1,...,z_n) S(z_1,...,z_n, x_1,...,x_n)\] (to use the jargon, $R$ is pp-definable in $\s D$)
        \item any acyclic instance of $(D,R)$ has a solution
        \item there is no constant sequence in $R$: for all $x$, $\neg R(x,x,x,...,x)$
    \end{enumerate}
\end{lem}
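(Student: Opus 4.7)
My plan is to use the sharpened characterization of non-width-1 structures recalled in the paragraph just above the lemma: when $\s D$ is not width-1, there is an unsolvable instance $\s X$ whose every acyclic lift is solvable, a vertex $v \in X$, and a particular acyclic lift $\pi \colon \s Y \to \s X$ admitting no solution constant on the fibre $F := \pi^{-1}(v) = \{y_1,\ldots,y_n\}$. This $\s Y$ will be the ``gadget'' that defines $R$.

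Concretely, I would set
\[R = \{(h(y_1),\ldots,h(y_n)) \in D^n : h \colon \s Y \to \s D \text{ is a homomorphism}\}.\]
Property (1) is immediate: take $S$ to be the conjunction of all constraints of $\s Y$, with the non-boundary variables of $\s Y$ playing the role of the $z_j$'s. Property (3) is exactly the failure of $\s Y$ to admit a constant-on-$F$ solution, which was part of what the sharpened characterization handed us.

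The content is (2). Given an acyclic instance $\s Z$ of $(D, R)$, I would construct a $\s D$-instance $\s Z'$ by attaching, for each $R$-edge $(z_1,\ldots,z_n)$ of $\s Z$, a fresh disjoint copy of $\s Y$ and identifying its fibre $F$ with $\{z_1,\ldots,z_n\}$. There is a natural map $\Pi \colon \s Z' \to \s X$ sending every $\s Z$-vertex to $v$ and extending on the interior of each attached copy of $\s Y$ via $\pi$; because $\pi$ already sends all of $F$ to $v$, $\Pi$ is a well-defined homomorphism. Assuming $\s Z'$ is acyclic, it is then an acyclic lift of $\s X$, so by hypothesis it has a solution $h \colon \s Z' \to \s D$. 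The restriction of $h$ to $V(\s Z)$ is the desired homomorphism $\s Z \to (D,R)$: for each edge $(z_1,\ldots,z_n)$ of $\s Z$, the further restriction $h\!\restriction\! V(\s Y_e)$ witnesses via the pp-definition that $(h(z_1),\ldots,h(z_n)) \in R$.

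The main obstacle will be ensuring that the expansion $\s Z'$ is genuinely Berge acyclic. Whenever two $R$-edges of $\s Z$ share a vertex, the corresponding copies of $\s Y$ are glued at overlapping boundaries, and one must rule out new Berge cycles formed by splicing a path through one gadget with a path through a neighboring one. I expect one handles this by strengthening the initial choice of $\s Y$ so that the boundary $F$ is ``inert'' -- for instance, no two distinct vertices of $F$ lie in a common hyperedge of $\s Y$, and any internal path in $\s Y$ joining two boundary vertices meets $F$ only at its endpoints -- which should be arrangeable by subdividing $\s Y$ or passing to a larger acyclic cover before projecting onto $F$. Under such a separation property, the amalgam of $\s Y$-copies along an acyclic $\s Z$ is again acyclic by a standard bookkeeping argument, closing the loop.
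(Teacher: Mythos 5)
A preliminary remark on the comparison you were asked for: this paper never proves the lemma — it is imported verbatim from {\cite[Lemma 5.6]{csps}}, and the paragraph you mined is only an informal gloss on how that lemma is used. So there is no internal proof to match; what you have written is a reduction of the lemma to the glossed ``sharpened characterization'' (an unsolvable $\s X$ all of whose acyclic lifts are solvable, together with a vertex $v$ and an acyclic lift $\s Y$ with no solution constant on the fibre of $v$). That input — including the implicit but essential point that $\s Y$ and the fibre $\pi^{-1}(v)$ are \emph{finite}, without which $R$ is not a finitary pp-definable relation at all — is itself the substantive content established in \cite{csps}, of essentially the same depth as the lemma being proved. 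As a blind proof your argument is therefore conditional rather than self-contained; that said, the reduction itself (take $R$ to be the set of fibre-traces of homomorphisms $\s Y\to\s D$, read (1) off the constraints of $\s Y$, read (3) off the choice of $\s Y$, and prove (2) by amalgamating copies of $\s Y$ over the $R$-edges of an acyclic instance $\s Z$ and restricting a solution of the resulting acyclic lift) is correct in outline and is exactly the mechanism the paper's prose describes.

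The step you flag as the main obstacle needs no strengthening of $\s Y$, and your proposed fix would not work anyway. Berge-acyclicity of $\s Z$ already gives everything: two distinct $R$-edges share at most one vertex (two shared vertices is a cycle of length $2$), and the vertex--edge incidence graph of $\s Z$ is a forest. Hence distinct copies of $\s Y$ meet only in single vertices of $\s Z$, following a forest pattern, and each copy is itself acyclic; a Berge cycle in $\s Z'$ would either lie inside one copy (impossible) or decompose into maximal segments through copies with distinct entry and exit vertices, and these project to a non-backtracking closed walk in the incidence forest of $\s Z$ — also impossible. So no ``inert boundary'' hypothesis is needed, and it is fortunate that it isn't: ``subdividing $\s Y$'' is not an available move, since $\s Y$ must remain an instance of the fixed template $\s D$ (subdividing a constraint changes the signature), and there is no way to force distinct fibre vertices to avoid common hyperedges or short connections. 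As written, then, your proof of (2) is incomplete only in that it defers the acyclicity verification to a strengthening that is both unnecessary and unavailable; replacing that paragraph with the incidence-forest argument closes the gap. Two minor additions you should make: note that $R\neq\emptyset$ (immediate, since $\s Y$ is an acyclic lift and hence solvable), and either restrict to $R$-constraints with pairwise distinct entries or explain how repeated entries are handled, since identifying fibre vertices inside a single copy could create cycles; the application in this paper only ever uses edges with distinct vertices.
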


Point (1) says that we can view any instance of $(D, R)$ as an instance of $\s D$ by replacing each $R$-edge with a constellation of $\s D$ relations and dummy variables. 

Now, we'll invoke some facts about random regular hypergraphs. First, they are asymptotically acyclic:

\begin{lem} \label{lem:asymptoticallyacyclic}
    If $\ip{\s H_n:n\in\N}$ is a sequence of $d$-regular $u$-uniform hypergraphs with $|\s H_n|=c_n\geq n$ and $\s H_n$ chosen uniformly at random from all $d$-regular $u$-uniform hypergraphs of size $c_n$, then, with probability $1$
    \[\lim_{n\in\N} \operatorname{girth}(\s H_n)= \infty\]
\end{lem}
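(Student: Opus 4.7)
My approach is a first-moment calculation in the pairing (configuration) model for random $d$-regular $u$-uniform hypergraphs. I would realize $\s H_n$ on $N = c_n$ vertices by giving each vertex $d$ half-edges and taking a uniformly random partition of the $Nd$ half-edges into blocks of size $u$. A standard inclusion-exclusion calculation shows that the resulting configuration is simple (no repeated edges and no degenerate edges, i.e.\ honestly a hypergraph) with probability bounded away from $0$ uniformly in $N$, and conditioning on simplicity yields the uniform measure on $d$-regular $u$-uniform hypergraphs on $[N]$. Consequently, first-moment bounds obtained in the pairing model transfer to the uniform model up to a bounded multiplicative constant.

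Next, for each $\ell \geq 2$ I would bound the expected number of Berge cycles of length exactly $\ell$. A Berge cycle is specified by a sequence of distinct vertices $(v_1, \ldots, v_\ell)$, distinct edges $(e_1, \ldots, e_\ell)$, and for each $i$ a choice of two half-edges used at $v_i, v_{i+1}$ inside $e_i$. The number of such labelled configurations is at most $N^\ell \cdot C(d,u,\ell)$ where $C(d,u,\ell)$ absorbs the half-edge choices; the probability that the specified pairing is realized in the random matching is $O((Nd)^{-\ell})$. So the expected number of cycles of length $\ell$ is $O_{d,u,\ell}(1)$ uniformly in $N$, and therefore the expected number of vertices of $\s H_n$ lying on a cycle of length at most $L$ is $O_L(1)$, giving an expected density of $O_L(1/N)$.

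By Markov's inequality, the density of vertices on short cycles tends to $0$ in probability as $N = c_n \to \infty$. A concentration step --- either a direct second-moment calculation (the joint count of two Berge cycles is of the right order) or a switching argument for regular hypergraphs --- upgrades this to summable tail bounds on the event that $\operatorname{girth}(\s H_n) \leq L$. Since the samples $\s H_n$ are chosen independently across $n$ and $c_n \geq n$, Borel--Cantelli then yields the claimed almost-sure bound on the girth.

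The main obstacle will be the bookkeeping in the first-moment count: one needs to handle distinctness of the vertices and edges, the over-counting by rotations and reflections of the cycle, and the transfer from the pairing model to the uniform model, carefully enough to isolate the $N$-dependence. None of this requires new ideas, but the polynomial factors depending on $d, u, \ell$ must be tracked so that the concentration inequalities in the final step produce summable probabilities.
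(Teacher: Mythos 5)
Your setup (configuration model for $d$-regular $u$-uniform hypergraphs, transfer to the uniform model at the cost of a bounded factor, first-moment count of Berge cycles) is the standard route and is what the paper has in mind when it says the proof is ``essentially the same as for graphs'' and cites the literature. The problem is your final step. The first-moment count gives that the expected number of Berge cycles of length $\ell$ is $\Theta_{d,u,\ell}(1)$ --- bounded, but also bounded \emph{away from zero} --- and in fact the short-cycle counts converge to Poisson random variables with constant means. Consequently $\bb P\bigl(\operatorname{girth}(\s H_n)\leq L\bigr)$ converges to a strictly positive constant; it does not tend to $0$, so no second-moment or switching argument can upgrade it to summable (or even vanishing) tail bounds, and the Borel--Cantelli step collapses. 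Worse, since the samples are independent across $n$, the second Borel--Cantelli lemma shows that almost surely infinitely many $\s H_n$ contain a cycle of length $2$, so the literal pointwise statement $\lim_n \operatorname{girth}(\s H_n)=\infty$ cannot be reached by this (or any) argument; the lemma has to be read in the ``asymptotically essentially acyclic'' sense in which the paper actually uses it, namely that the subsequential local-global (Benjamini--Schramm) limit is acyclic almost everywhere.

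What your calculation does legitimately deliver is exactly that weaker, and sufficient, conclusion: since the expected number of cycles of length at most $L$ is $O_L(1)$, the expected fraction of vertices lying on such a cycle is $O_L(1/c_n)$, so by Markov the density of vertices on short cycles tends to $0$ (in probability, and almost surely along the sequence since $c_n\geq n$ makes these expectations summable after a mild refinement). That is precisely what is needed in the application (Theorem \ref{thm:trivialmsrblcsp}): any local-global limit of the sequence has girth-$\leq L$ vertices of measure $0$ for every $L$, hence is acyclic off a null set. So rather than trying to force a concentration estimate for the event $\{\operatorname{girth}\leq L\}$, you should restate the target as vanishing density of short cycles (or as acyclicity of the limit) and stop after the Markov step.
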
 The proof is essentially the same as for graphs. For instance, see \cite[Lemma 24]{DumZhu}.

We'll also use the following bound due to Bennett and Frieze (One can also prove this with an entropy inequality):
 \begin{thm}[\cite{BF}]
     If $\ip{\s H_n:n\in\N}$ is a growing sequence of uniformly random $d$-regular $u$-uniform hypergraphs, then almost surely, 
     \[\limsup_{n\in\N} \alpha_{\mu}(\s H_n)\leq O\bigl(( \log(d)/d)^{1/(u-1)}\bigr).\]
 \end{thm}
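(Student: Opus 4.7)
The plan is to prove this via the first moment method applied to the configuration model for $d$-regular $u$-uniform hypergraphs, then transfer to the uniformly random simple model via a standard contiguity argument, and finally upgrade from ``asymptotically almost surely'' to ``almost surely'' by Borel--Cantelli.

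First, I would work in the configuration model: attach $d$ half-edges to each of $n$ vertices and partition the resulting $nd$ half-edges uniformly at random into $nd/u$ unordered $u$-tuples, each of which becomes a hyperedge. For a fixed set $S \subseteq V$ with $|S| = k = \alpha n$, the set $S$ is independent exactly when no block of the partition lies entirely in the $kd$ half-edges attached to $S$. A direct computation (or a Poissonization argument) shows that this probability is at most $\exp\bigl(-(1+o(1))\,k^{u}d/(u n^{u-1})\bigr)$ uniformly for $\alpha$ bounded away from $1$. Combining with the usual estimate $\binom{n}{k}\leq \exp(k\log(en/k))$ and a union bound over all $\binom{n}{k}$ candidate sets gives
\[
\mathbb{E}[\#\text{independent sets of size }\alpha n]\;\leq\;\exp\!\Bigl(n\bigl[\alpha\log(e/\alpha)-(1+o(1))\alpha^{u}d/u\bigr]\Bigr).
\]

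Second, I would set $\alpha = C(\log d / d)^{1/(u-1)}$ and solve for $C$. For $d$ large, $\log(e/\alpha) = \tfrac{1}{u-1}\log d + O(\log\log d)$, so the bracketed expression becomes
\[
\alpha\Bigl(\tfrac{1}{u-1}\log d - \tfrac{C^{u-1}\log d}{u} + O(\log\log d)\Bigr),
\]
which is strictly negative (bounded away from $0$) once $C^{u-1} > u/(u-1)$. For such $C$ the expected count is at most $e^{-\delta n}$ for some $\delta = \delta(d,u,C) > 0$. By Markov, the probability that the configuration model produces a hypergraph with an independent set of density $\alpha$ is at most $e^{-\delta n}$.

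Third, I would transfer to the uniform model. It is standard that the probability that the configuration model outputs a simple hypergraph is bounded below by a constant $c(d,u)>0$ as $n\to\infty$ (the proof for graphs via Poisson convergence of the number of loops and multi-edges adapts directly to the hypergraph setting; the number of ``bad configurations'' is asymptotically Poisson with finite mean depending only on $d,u$). Conditioning on simplicity recovers the uniform distribution on simple $d$-regular $u$-uniform hypergraphs, and the probability of any event inflates by at most the factor $1/c(d,u)$. Hence the bound $e^{-\delta n}$ survives, still exponentially small. Finally, $\sum_n e^{-\delta n}<\infty$ and Borel--Cantelli give $\limsup_n \alpha_\mu(\s H_n) \leq C(\log d/d)^{1/(u-1)}$ almost surely, which is the desired $O\bigl((\log d/d)^{1/(u-1)}\bigr)$ bound.

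The main obstacle is the combinatorial computation of $\mathbb{P}(S\text{ is independent})$ in the configuration model: one must control the dependence between the $nd/u$ edges carefully, which is cleanest via an exchangeability argument (think of laying down edges one at a time, noting that the fraction of remaining half-edges inside $S$ stays close to $k/n$ throughout). The contiguity/simplicity step is technical but routine, and the Borel--Cantelli upgrade is immediate from the exponential tail.
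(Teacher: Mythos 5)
Your proposal is correct in outline, but it is worth noting that the paper does not prove this statement at all: it is quoted as a black box from Bennett--Frieze \cite{BF}, with only a parenthetical remark that one could alternatively prove it ``with an entropy inequality.'' The cited result is in fact sharper than what is stated (Bennett and Frieze determine the independence number of random regular uniform hypergraphs asymptotically, matching upper and lower bounds), whereas your first-moment/configuration-model argument gives only the upper bound --- which is exactly what the paper needs, since the text immediately afterwards says that all that is used is that the independence ratio tends to $0$ as $d\to\infty$. So your route is a legitimately more elementary, self-contained alternative: expected count of independent sets of density $\alpha$ in the configuration model, the threshold calculation $\log(1/\alpha)\approx\tfrac{1}{u-1}\log d$ versus $\alpha^{u-1}d/u$ giving $C^{u-1}>u/(u-1)$, transfer to the uniform simple model via the bounded-below probability of simplicity (valid for fixed $d,u$; each simple hypergraph corresponds to the same number $(d!)^n$ of configurations, so conditioning on simplicity does give the uniform model), and Borel--Cantelli for the almost-sure $\limsup$. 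Two small caveats, neither fatal: the precise $\exp\bigl(-(1+o(1))k^{u}d/(un^{u-1})\bigr)$ bound for $\mathbb{P}(S\text{ independent})$ needs a genuine argument (your sequential-revelation sketch most easily yields the same exponent up to a constant factor, e.g.\ by only processing the first $kd/(2u)$ blocks so that at least half of $S$'s half-edges remain available at every step), but since the conclusion is only an $O\bigl((\log d/d)^{1/(u-1)}\bigr)$ bound, losing a constant in the exponent is harmless; and one should restrict to $n$ with $u\mid nd$ (or to the sizes $c_n$ actually appearing in the sequence), which does not affect the Borel--Cantelli step since the exponential bound is uniform.
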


 Really, we just need that as $d$ goes to infinity, the independence ratio goes to $0$. Putting these together we get the following:
 
\begin{thm}\label{thm:trivialmsrblcsp}
    For a finite relational structure $\s D$, if $\s D$ is not width-1, then there is a pmp instance of $\s D$, $\s X$, so that $\s X$ has a solution but no measurable solution.\end{thm}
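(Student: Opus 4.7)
The plan is to use the limit machinery to build an acyclic pmp hypergraph with large measurable chromatic number, then paste copies of the bad constraint from Lemma \ref{lem:acyclicdoodad} along its edges. Applying that lemma gives a pp-definable relation $R$ on $D$ of some arity $n$, with definition $R(\vec x)\equiv(\exists \vec z)\,S(\vec z,\vec x)$, satisfying (a) every acyclic $(D,R)$-instance is solvable and (b) $\neg R(d,\ldots,d)$ for all $d\in D$.

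Next I would construct an acyclic pmp $n$-uniform hypergraph $\s H$ with $\alpha_\mu(\s H)<1/|D|$. Pick $d$ large enough that the Bennett--Frieze bound $O((\log d/d)^{1/(n-1)})$ sits strictly below $1/|D|$. A typical sequence of uniformly random $d$-regular $n$-uniform hypergraphs $\s H_k$ with $|\s H_k|\to\infty$ then satisfies $\operatorname{girth}(\s H_k)\to\infty$ and $\alpha_\mu(\s H_k)<1/|D|$ for large $k$, by Lemma \ref{lem:asymptoticallyacyclic} and Bennett--Frieze. By compactness of local-global convergence, pass to a convergent subsequence with some pmp limit $\s H$. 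The probability of lying on a Berge cycle of any fixed length is a continuous local statistic, and $\alpha_\mu$ is continuous in the local-global topology, so after discarding a null set $\s H$ is acyclic with $\alpha_\mu(\s H)<1/|D|$.

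Now build the instance $\s X$: take the vertex set to be $V\sqcup(E\times[m])$ where $V$ is $\s H$'s vertex set, $E$ its edge set, and $m$ is the number of existential variables in $S$; equip this with the normalized sum of $\mu$ and $\tilde\mu\otimes(\text{counting})$. For each hyperedge $e=\{v_1,\ldots,v_n\}\in E$ impose the atoms of $S(w_{e,1},\ldots,w_{e,m},v_1,\ldots,v_n)$. That $\s X$ is a pmp instance of $\s D$ follows from the measure-theoretic handshake lemma for $\s H$, which makes each atomic relation introduced in $\s X$ pmp with respect to the new measure.

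Finally, check the two conditions. Each component of $\s H$ is a countable acyclic $(D,R)$-instance, hence solvable by (a); choosing a solution in each component and then choosing witnesses for the $w_{e,j}$ via $S$ produces a (not necessarily measurable) solution to $\s X$. Conversely, any measurable solution $f$ satisfies $R(f(v_1),\ldots,f(v_n))$ on every hyperedge, so by (b) $f$ is never constant on an edge; hence $f\res V$ is a measurable proper $|D|$-coloring of $\s H$, giving a measurable independent set of measure $\geq 1/|D|$ by pigeonhole---contradicting $\alpha_\mu(\s H)<1/|D|$. The main technical care is in verifying the handshake/pmp details for $\s X$ and the acyclicity/independence-ratio transfer in the limit, both routine given the continuity results of the previous section.
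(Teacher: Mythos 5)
Your proposal is correct and follows essentially the same route as the paper: take a local-global limit of random $d$-regular $u$-uniform hypergraphs (acyclic by Lemma \ref{lem:asymptoticallyacyclic}, small independence ratio by Bennett--Frieze), impose the relation $R$ from Lemma \ref{lem:acyclicdoodad} along its edges, and conclude solvability from acyclicity but non-measurable-solvability since no map with a large monochromatic class can avoid a constant tuple in $R$. The only cosmetic difference is that you spell out the pp-definability lift with dummy vertices explicitly, while the paper simply invokes it; note also that imposing the (not necessarily symmetric) relation on unordered hyperedges requires a Borel ordering of the vertices, which the paper makes explicit and you leave implicit.
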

\begin{proof} Suppose $\s D$ is not width-1, and fix $R$ as in Lemma  \ref{lem:acyclicdoodad}. let $u$ be the arity of $R$, let $d$ be large enough that a random $d$-regular $u$-uniform hypergraph has no independent set of density more than $\frac{1}{2|D|}$, and let $\ip{\s H_n:n\in\N}$ be sequence of random $d$-regular $u$-uniform hypergraphs.

    Now we'll define a pmp instance of $\s D$. By point (1) of Lemma \ref{lem:asymptoticallyacyclic}, it suffices to build an instance of $(D, R)$. Let $\s H$ be a local-global limit of any subsequence of $\ip{\s H_n:n\in\N}$. One exists by compactness, and the independence ratio of $\s H$ is at most $\frac{1}{2|\s D|}$. So map from $\s H$ to $\s D$ is constant on some hyperedge.  Now let $\s X$ have domain the same as $\s H$. Choose a Borel ordering $\prec$ on the set of vertices. Define $R^{\s X}$ by 
    \[R^{\s X}(x_1,...,x_u):\lra (x_1,...,x_u)\in \s H\mbox{ and }x_1\prec x_2\prec...\prec x_u.\]

    By Lemma \ref{lem:asymptoticallyacyclic}, $\s H$ is acyclic, so $\s X$ is an acyclic instance of $(D,R)$. Thus $\s X$ has a (set theoretic) solution. But, if $f: \s X\rightarrow \s D$ is any measurable map, $f$ is constant on some hyperedge in $\s H$, $e=\{x_1,...,x_u\}$. Then (up to reordering), $R^{\s X}(x_1,...,x_u)$. But since $R$ doesn't contain a constant sequence, $\neg R(f(x_1),f(x_2),...,f(x_u)).$ So $f$ is not a solution.    
\end{proof}
\begin{cor}
    For any finite relational structure $\s D$, $\s D$ is width-1 if and only if every solution to an instance of $\s D$ can be turned into a measurable solution.
\end{cor}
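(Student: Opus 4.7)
The plan is to observe that this biconditional is an immediate repackaging of the two results just proved. For the forward direction (width-1 implies measurable solvability), I would simply invoke Corollary \ref{cor:width1impliestrivial}: when $\s D$ is width-1, Theorem \ref{thm:thesisthm} gives a Borel homomorphism from any Borel instance with a solution, and since every pmp structure agrees with a Borel structure on a measure $1$ set and Borel maps are measurable, any pmp instance of $\s D$ with a solution has a measurable solution.

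For the reverse direction, I would contrapose Theorem \ref{thm:trivialmsrblcsp}. Suppose $\s D$ is not width-1. The theorem produces a pmp instance $\s X$ which is set-theoretically solvable (being an acyclic instance of a pp-definable $(D,R)$ as in Lemma \ref{lem:acyclicdoodad}) but admits no measurable homomorphism to $\s D$, because the carrier hypergraph $\s H$ is obtained as a local-global limit of high-degree random regular hypergraphs and therefore has independence ratio below $1/|D|$, forcing every measurable map to $D$ to be constant on some hyperedge, which $R$ forbids. Hence, if every solvable pmp instance of $\s D$ has a measurable solution, $\s D$ must be width-1.

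There is no additional obstacle at the level of the corollary itself; all the genuine work has been pushed into Theorem \ref{thm:trivialmsrblcsp}, whose hard ingredients were the construction of an acyclic pmp hypergraph with arbitrarily small measurable independence ratio (via Lemma \ref{lem:asymptoticallyacyclic} and the Bennett--Frieze bound, together with the compactness of local-global limits), and the pp-definable gadget $R$ from Lemma \ref{lem:acyclicdoodad}. The corollary itself is just the recognition that these two theorems together close the loop Borel $\Rightarrow$ measurable $\Rightarrow$ width-1 $\Rightarrow$ Borel.
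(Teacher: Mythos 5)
Your proposal is correct and matches the paper's treatment: the corollary is stated there without further argument precisely because it is the immediate conjunction of Corollary \ref{cor:width1impliestrivial} (width-1 implies measurable solvability) and Theorem \ref{thm:trivialmsrblcsp} (not width-1 yields a solvable pmp instance with no measurable solution). Your accounting of where the real work lives is accurate, so nothing more is needed.
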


\subsection{Finitary corollary}

We can also run the limit machinery in the other direction to extract a finitary corollary from Corollary \ref{cor:width1impliestrivial}. Measurable solutions on limit structures correspond to asymptotic solutions along the sequence. So, Corollary \ref{cor:width1impliestrivial} should tell us that the trivial CSPs are those where, if finite obstructions to a solution disappear in the sequence, the sequence has an asymptotic solution. 

\begin{dfn}
    For a relational structure $\s D$ and instance $\s X$ of $\s D$, the obstruction number is
    \[\ob_{\s D}(\s X)=\sup\bigl\{|A|: A\subseteq X, \;\s X\res A\mbox{ has no solution}\bigr\}.\] 

    For a pmp instance $\s X$ of $\s D$ with invariant measure $\mu$, the solution density is the optimal fraction of relations we can satisfy:
    \[\rho_{\s D}(\s X)=\inf_{R\in \tau} \sup\bigl\{\mu(f\inv[R])/\mu(R^{\s X}): f: \s X\rightarrow \s D\mbox{ measurable}\bigr\}.\]
    
    If $\ip{\s X_i:i\in\N}$ is a sequence of pmp instances of $\s D$, then we say that $\ip{\s X_i:i\in\N}$ has an asymptotic solution if it's solution densities tend to one, i.e.
    \[\lim_{i\in \N} \rho_{\s D}(\s X_i)=1.\]
\end{dfn}

We then have

\begin{thm}
    The following are equivalent for a finite relational structure $\s D$:
    \begin{enumerate}
        \item $\s D$ is width-1
        \item If $\ip{\s X_i: i\in\N}$ is a sequence of instances with bounded degree, then $\ip{\s X_i:i\in\N}$ has an asymptotic solution iff $\lim_{i\in\N} \ob_{\s D}(\s X_i)=\infty$
    \end{enumerate}
\end{thm}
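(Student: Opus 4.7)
The plan is to combine the compactness and continuity theorems of Section 2 with Corollary \ref{cor:width1impliestrivial} and the construction in the proof of Theorem \ref{thm:trivialmsrblcsp}. The forward direction (1)$\Rightarrow$(2) is a standard ``pass to a local-global limit'' argument; the backward direction (2)$\Rightarrow$(1) is proved contrapositively using the sequence extracted in the proof of Theorem \ref{thm:trivialmsrblcsp}.

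For (1)$\Rightarrow$(2), assume $\s D$ is width-1 and fix a bounded-degree sequence $\ip{\s X_i:i\in\N}$. The main half is that $\ob_{\s D}(\s X_i)\to\infty$ implies asymptotic solvability. Suppose for contradiction $\rho_{\s D}(\s X_{i_k})\leq 1-\epsilon$ along some subsequence. The $R$-satisfaction ratio of a labeling $f$ is a linear functional on bounded-radius local statistics, so its sup over measurable $f$ is continuous on the space of local-global classes, and hence $\rho_{\s D}$ itself is continuous under local-global convergence; Theorem \ref{thm:swapgraph} provides the needed replacement of approximate labelings by exact ones in locally equivalent structures. By compactness of the space of local-global equivalence classes, pass to a further subsequence converging to a pmp instance $\s X$ with local geometry $\mu_0$, satisfying $\rho_{\s D}(\s X)\leq 1-\epsilon$. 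The hypergraph Ab\'ert--Weiss theorem of Section 2 gives $\s B(\mu_0)\preccurlyeq_{lg}\s X$, whence $\rho_{\s D}(\s B(\mu_0))\leq \rho_{\s D}(\s X)\leq 1-\epsilon$. But the condition $\ob_{\s D}(\s X_i)\to\infty$ forces every $[H,o]\in\supp(\mu_0)$ to have all finite subinstances solvable: an unsolvable subinstance of size $m$ would lie in some ball $B_r(o)$ whose isomorphism type has positive $\mu_0$-mass, and local-global convergence would then place such a bad ball in $\s X_i$ with positive density for large $i$, contradicting $\ob_{\s D}(\s X_i)\to\infty$. Propositional compactness now produces a set-theoretic solution on each component of $\s B(\mu_0)$ and hence on all of $\s B(\mu_0)$, and Corollary \ref{cor:width1impliestrivial} upgrades it to a measurable solution, so $\rho_{\s D}(\s B(\mu_0))=1$, the desired contradiction. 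The converse half (asymptotic solvability implies $\ob_{\s D}(\s X_i)\to\infty$) is softer: if an unsolvable subinstance of bounded size persists with positive density along a subsequence, it survives in the limit $\s X$ with positive measure, forcing $\rho_{\s D}(\s X)<1$ and by continuity $\rho_{\s D}(\s X_i)$ bounded away from $1$.

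For (2)$\Rightarrow$(1), I prove the contrapositive by recycling the construction in the proof of Theorem \ref{thm:trivialmsrblcsp}. Assume $\s D$ is not width-1 and fix $R$ from Lemma \ref{lem:acyclicdoodad} together with the sequence $\ip{\s H_i:i\in\N}$ of random $d$-regular $u$-uniform hypergraphs. Define $\s X_i$ as the ordered $R$-instance on $\s H_i$, expanded to an instance of $\s D$ via the pp-definition of $R$. Lemma \ref{lem:asymptoticallyacyclic} gives girth of $\s H_i$ tending to infinity, so every subinstance of $\s X_i$ of bounded size is eventually acyclic and hence solvable by property (2) of Lemma \ref{lem:acyclicdoodad}, yielding $\ob_{\s D}(\s X_i)\to\infty$. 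But the local-global limit $\s X$ has no measurable solution; the Bennett--Frieze bound used in Theorem \ref{thm:trivialmsrblcsp} shows that any measurable $f$ has a color class of measure exceeding the independence ratio, forcing a positive measure of monochromatic hyperedges (and hence of violated $R$-constraints), so $\rho_{\s D}(\s X)<1$ strictly. By continuity $\rho_{\s D}(\s X_i)$ is bounded away from $1$, witnessing the failure of the iff in (2).

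The principal obstacle is verifying continuity of $\rho_{\s D}$ under local-global convergence despite its inf-over-relations-sup-over-labelings structure. The argument is essentially the one used for the independence ratio in Section 2: the $R$-satisfaction ratio of any fixed labeling is a linear functional of local statistics, its sup over measurable labelings is continuous on local-global classes by Theorem \ref{thm:swapgraph}, and an inf of finitely many continuous functions is continuous. A secondary technicality is the pp-translation between $(D,R)$-instances and $\s D$-instances in the backward direction, which inflates the degree only by a constant factor and preserves both asymptotic acyclicity and the independence-ratio estimates.
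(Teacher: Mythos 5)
Your proposal follows essentially the paper's route: for (2)$\Rightarrow$(1) it is exactly the paper's argument (the random-regular construction of Theorem \ref{thm:trivialmsrblcsp}, with the useful extra observation that a color class of measure strictly above the independence ratio must contain a \emph{positive measure} of hyperedges, which is what actually forces $\rho_{\s D}(\s X)<1$ rather than mere absence of an exact solution -- the paper glosses this); and for the substantive half of (1)$\Rightarrow$(2) it is the same compactness-plus-continuity argument, except that you route the limit $\s X$ through $\s B(\mu_0)$ via Ab\'ert--Weiss before applying Corollary \ref{cor:width1impliestrivial}. That detour is not needed (the paper applies propositional compactness and Corollary \ref{cor:width1impliestrivial} directly to the limit $\s X$), though it does have the merit of cleanly disposing of possible measure-zero obstructions in the limit, a point the paper's one-line claim $\ob_{\s D}(\s X)=\lim_i\ob_{\s D}(\s X_{f(i)})$ quietly elides; note also that the paper's Ab\'ert--Weiss theorem is stated only for hypergraphs, so you are implicitly extending it to general pmp instances.

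The one genuine gap is in your ``softer'' half of (1)$\Rightarrow$(2), namely that an asymptotic solution forces $\ob_{\s D}(\s X_i)\to\infty$. You argue that failure of $\ob_{\s D}(\s X_i)\to\infty$ gives a bounded-size unsolvable subinstance persisting \emph{with positive density}, which then survives in the limit with positive measure. But the negation only yields a single small obstruction in infinitely many $\s X_i$: one bad gadget of size $n$ attached to an otherwise solvable instance of size $i$ keeps the obstruction number bounded while the sequence is still asymptotically solvable (violate only the gadget's constraints). So this implication cannot be recovered by your density argument, and indeed, read with the counting (rather than density) notion of obstructions, it is false for any width-1 template admitting some finite unsolvable instance. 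The paper's own proof silently establishes only the other implication ($\ob_{\s D}(\s X_i)\to\infty$ implies asymptotic solvability) together with the counterexample for non-width-1 structures, and its remark after the theorem about replacing the number of small obstructions by their density is precisely the repair; your argument for this half becomes correct verbatim once vanishing \emph{density} of small obstructions is the hypothesis being negated.
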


We could strengthen this further by asking that the density of small obstructions vanish rather than the number.

\begin{proof}
    If $\s D$ is not width-1, then gluing instances of $(D,R)$ together along along a random regular hypergraph as in the proof of Theorem \ref{thm:trivialmsrblcsp} gives a counter-example to property (2) above.

    Conversely, if $\s D$ is width-1, consider any sequence $\ip{\s X_i:i\in\N}$ of instances with bounded degree with $\ob_{\s D}(\s X_i)$ tending to infinity. Suppose toward contradiction that $\limsup_i \rho_{\s D}(\s X_i)<1$. Then let $\s X$ be any subsequential local-global limit (one exists by compactness). By the definition of local-global limit, \[\ob_{\s D}(\s X)=\lim_i \ob_{\s D}(\s X_{f(i)})=\infty\]
    and \[\rho_{\s D}(\s X)=\lim_i \rho_{\s D}(\s X_{f(i)})\leq \limsup_i \rho_{\s D}(\s X_{i})<1 .\] But the first displayed equation and Theorem \ref{thm:trivialmsrblcsp} together tell us that $\s X$ has a measurable solution, which is contradicts the second displayed equation.
\end{proof}

\section{Nibble nibble}

Our second application is about reinforcing the bridge between probabilistic and descriptive combinatorics. In many probabilistic arguments-- such as the so-called R\"odl nibble or differential equation methods-- one builds a solution to a problem in a sequence of small random steps. Typically an easy computation shows that a small random step moves you towards a solution in expectation, then a concentration argument of some kind says you can find a trajectory that gets you all the way (or 99\% of the way) to a solution. 

These arguments can also be run on pmp structures, and it turns out that some of the annoying technicalities disappear: the expected change becomes a literal change in measures, regularity assumptions can be bootstrapped, the process becomes more symmetric since we don't have to processes single vertices at a time, etc. We'll illustrate this softening phenomenon with two arguments. First, we'll prove a version of the R\"odl--Frankl matching theorem for pmp hypergraphs, then we'll give another matching argument for hypergraphs of large girth with explicit bounds on the size of the matching.

\subsection{Frankl--R\"odl matching}

 A matching in a hypergraph is a collection of disjoint hyperedges. The Frankl--R\"odl theorem says that every hypergraph with degrees mostly close to $\Delta$ and most pairs of vertices having small codegree (i.e.~most vertices has few edges in common) has a matching that covers a $(1+o(\Delta))$-fraction of the vertices \cite{FranklRodl}. For graphs (where codegree is always at most $1$), this is a weakening of Vizing's theorem. The proof is considered straightforward nowadays: pick a small random set of edges (a nibble) to move from the graph into a matching; compute how this affects degrees and codegrees; and iterate. A few second moment arguments are used to show that there is one set of edges whose removal has an effect close to expectation. This is the prototypical nibble argument.

We'll start by proving a measurable version of the Frankl--R\"odl matching theorem, then we'll show how to use it to recover the classical theorem.  In our measurable proof, we'll see that most of the second moment arguments get hidden inside the Ab\'ert--Weiss theorem and that we can simplify the regularity assumptions in proving the key lemma. 

In case the reader is unfamiliar with the classical theorem, here is a more detailed overview of our proof. The basic steps we'll take (our nibbles) are to pick a small set of edges, $C$, at random, throw all the edges in $C$ which don't overlap with other $C$ edges into our matching, and delete all the $C$ dedges and their vertices.

\begin{dfn}
    For $\s H$ a pmp hypergraph, and $C$ (for change) a measurable set of edges, let
    \[\s H^+(C)=(V^+(C), H^+(C))\] where
    \[V^+(C)=\{v\in V: (\forall e\in C)\; v\not\in e\}\]
    \[M(C)=\{e\in H: (\forall e\in f)\; e\cap f=e\mbox{ or } \emptyset\}.\]
    \[H^{+}(C)=H\cap [V^+(C)]^u\]
    \[\deg^+_C(v):=|\{e: e\cap \bigcup C=\emptyset, v\in e\}|.\]
\end{dfn}

Back of the envelope calculations suggests that if $\s H$ is close to $D$-regular and has small codegree, and if $C$ is iid distributed with probability $\epsilon/D$, then $V^+(C)$ has measure $e^{-\epsilon}$, and we cover a $(1-e^{-\epsilon})(1-O(\epsilon D))$-fraction of vertices with our matching. So, if we iterate these small random steps with small enough $\epsilon$ around $-\log(\epsilon)/\epsilon$ many times, we should end up covering most of the vertices. These calculations are all approximate, so we need to be a little careful with our error.

Fix a tolerance $\delta$ and a degree $\Delta$. We'll call a pmp hypergraph $\delta$-good for $\Delta$ if the probability that the degree of a vertex is far from $\Delta$ and the probability that the codegree of any two vertices is large relative to $\Delta$ are both tolerably low: 

\begin{dfn}
    Fix $\delta, \Delta>0$. We call a pmp hypergraph $\s H$ $\delta$-\textbf{good} for $\Delta$ if
    \[\bb P(|\deg(v)-\Delta|>\delta\Delta)<\delta\] and 
    \[\bb P\bigl(|(\exists w)\;\operatorname{codeg}(v,w)|>\delta\Delta\bigr)<\delta.\]
\end{dfn}
Note that goodness is a property of the local geometry of a hypergraph.

A little more scratch-work suggests that if $\s H$ is $\delta'$-good for $\Delta$ and $C$ is iid distributed with probability $\epsilon/\Delta$, then there is a worse tolerance $\delta$ so that $\s H^+(C)$ is $\delta$-good for $\Delta e^{-\epsilon (u-1)}$. The following lemma makes this precise and computes how many vertices are covered by $M(C)$:

\begin{lem}
    For any tolerance $\delta'$, for any large enough (depending only on $\delta'$) degree $\Delta$, there is some $\delta$ so that for any $\epsilon\in [0,1]$, any atomless pmp hypergraph $\s H$ which is $\delta$-good for $\Delta$ has some change $C$ so that
    \begin{enumerate}
        \item $\s H^+(C)$ is $\delta'$-good for $\Delta e^{-\epsilon(u-1)}$ (with the normalized measure inherited from $\s H$.
        \item $\mu\bigl(\s V^+(C)\bigr)=e^{-\epsilon}\bigl(1+O(\delta')\bigr)$
        \item $\bb P\bigl (\exists e\in M(C))\; v\in e \,|\, v\not\in V^+(C)\bigr)= e^{-\epsilon u}\bigl(1+O(\epsilon)\bigr)\bigl(1+O(\delta')\bigr)$
    \end{enumerate} where the hidden constants only depend on $u$.
\end{lem}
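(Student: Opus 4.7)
The plan is to take $C$ to be the measurable edge-selection on $\s B(\mu_0)$ obtained by selecting each edge iid with probability $p := \epsilon/\Delta$ via the iid $[0,1]$ labels, where $\mu_0$ is the local geometry of $\s H$. The containment $\s B(\mu_0) \preccurlyeq_{lg} \s H$ from the previous subsection then transfers $C$ back to $\s H$ (after possibly passing to a local-global equivalent copy). Since all three conclusions are continuous in the relevant $r$-local statistics of the pair $(\s H, C)$, this transfer preserves them up to any tolerance I choose, absorbable into $O(\delta')$.

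Next I would compute expectations on a ``good'' rooted $2$-ball, i.e.\ one where the root has degree exactly $\Delta$ and every pair of vertices within distance $2$ has codegree at most $1$. Sampling over the iid labels on the $2$-neighborhood: (a) the root lies in $V^+(C)$ iff none of its $\Delta$ edges is chosen, with probability $(1-p)^\Delta = e^{-\epsilon}(1+O(1/\Delta))$, giving (2); (b) the events $\{e \in M(C)\}$ for $e$ through the root are pairwise disjoint, each with probability $p(1-p)^{u(\Delta-1)} \approx p e^{-u\epsilon}$ because exactly $u(\Delta-1)$ other edges share a vertex with $e$ (using the codegree bound), so the probability the root is covered by $M(C)$ is $\approx \epsilon e^{-u\epsilon}$, and dividing by $1 - e^{-\epsilon}$ produces the factor $e^{-u\epsilon}(1+O(\epsilon))$ in (3); (c) the expected degree in $\s H^+(C)$ of a surviving root is $\Delta(1-p)^{(u-1)(\Delta-1)} \approx \Delta e^{-\epsilon(u-1)}$. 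Integrating over $\mu_0$, with the $O(\delta)$-fraction of bad $2$-balls contributing an error $O(\delta)$ absorbed into $O(\delta')$ by taking $\delta$ small enough, yields (2), (3), and the ``first moment'' part of (1).

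The main obstacle will be the concentration half of (1): showing that the $\mu$-measure of vertices whose new degree differs from $\Delta e^{-\epsilon(u-1)}$ by more than a $\delta'$-fraction is itself less than $\delta'$. For a good root, the $\Delta$ indicators $\mathbf{1}[e \in H^+(C)]$ as $e$ ranges over edges through the root have pairwise covariance bounded in terms of codegrees within the $2$-ball, so the variance of their sum is $O(\Delta)$; Chebyshev then bounds the per-vertex probability of a $\delta'$-relative deviation by $O(1/(\delta'^2 \Delta))$, which falls below $\delta'/4$ once $\Delta$ is large enough in terms of $\delta'$. Integrating over $\mu_0$ (with bad $2$-balls contributing the other $\delta'/4$) controls the measure of atypical vertices in $\s H^+(C)$. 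An analogous variance computation over pairs handles the codegree condition in $\s H^+(C)$.

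Finally, having verified (1)--(3) for the iid $C$ on $\s B(\mu_0)$, applying $\s B(\mu_0) \preccurlyeq_{lg} \s H$ with tolerance much smaller than $\delta'$ produces a measurable $C$ on $\s H$ whose $r$-local statistics approximate those on $\s B(\mu_0)$ closely enough that (1)--(3) still hold, finishing the proof.
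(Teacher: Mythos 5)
Your overall strategy is the same as the paper's: run an iid nibble (each edge chosen with probability $\epsilon/\Delta$) on the Bernoulli hypergraphing $\s B(\mu_0)$, compute first moments ball-by-ball, get degree concentration from a covariance/Chebyshev bound, and pull the change back to $\s H$ via the hypergraph Ab\'ert--Weiss theorem. Two points, though, one a fixable slip and one a genuine gap. The slip: your ``good $2$-ball'' (root degree exactly $\Delta$, all codegrees at most $1$) is much stronger than what $\delta$-goodness provides, which is only degree within $\delta\Delta$ of $\Delta$ and codegrees at most $\delta\Delta$; a $\delta$-good hypergraph can have \emph{no} balls of your kind, so the claim that they occupy all but an $O(\delta)$-fraction is false. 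The remedy is routine --- redo the computations with $|E(v)|=\Delta(1+O(\delta))$ and $|N(e)|=u\Delta(1+O(\delta))$, exactly as the paper does --- and note in passing that the codegree condition for $\s H^+(C)$ is automatic (codegrees only decrease), so your ``analogous variance computation over pairs'' is unnecessary.

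The genuine gap is your plan to absorb the bad-ball contribution into $O(\delta')$ uniformly in $\epsilon$. That works for (1) and (2), where you normalize by sets of measure bounded below, but not for (3): there you condition on $v\notin V^+(C)$, an event of measure roughly $1-e^{-\epsilon}\leq\epsilon$, while $\delta$ must be fixed \emph{before} $\epsilon\in[0,1]$ is quantified. When $\epsilon\ll\delta$, the exceptional vertices (whose degrees and codegrees you do not control) can contribute a share of the deleted set comparable to, or larger than, the whole main term, so the conditional probability in (3) need not be $e^{-\epsilon u}(1+O(\epsilon))(1+O(\delta'))$. The paper sidesteps this by a preliminary compactness step: if the lemma failed for every tolerance, a subsequential local-global limit would be a hypergraph satisfying the degree and codegree conditions with \emph{no} exceptional set and still admitting no $\delta'$-good change; hence one may assume the conditions hold everywhere, and then every ball in $\supp(\mu_0)$ is good and all estimates are uniform in $\epsilon$. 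You need either this reduction or a separate argument controlling how much the bad vertices can occupy the deleted set; as written, the error bookkeeping for (3) does not close.
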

\begin{proof}
    Fix $\delta'>0$. It suffices to find $\delta'$-good changes under the assumption that
    \[\tag{$*$} \bb P\bigl(|\deg(v)-\Delta|>\delta\Delta\bigr)=0 \quad\mbox{ and }\quad \bb P\bigl(|(\exists w)\;\operatorname{codeg}(v,w)|>\delta\Delta\bigr)=0.\]
    This is because having a $\delta'$-good change describes an open set in the local statistics and so we can apply the usual compactness argument: if for every $\epsilon$ there are pmp hypergraphs satisfying 
    \[\bb P\bigl(|\deg(v)-\Delta|>\delta\Delta\bigr)<\epsilon\quad\mbox{ and }\quad \bb P\bigl(|(\exists w)\;\operatorname{codeg}(v,w)|>\delta\Delta\bigr)<\epsilon\]
   with no $\delta'$-good change, then some subsequential limit would have 
   \[\bb P\bigl(|\deg(v)-\Delta|>\delta\Delta\bigr)=0\quad \mbox{ and }\quad \bb P\bigl(|(\exists w)\;\operatorname{codeg}(v,w)|>\delta\Delta\bigr)=0\]
    and no $\delta'$-good change.

    By the hypergraph Ab\'ert--Weiss theorem, it suffices to find a $\delta'$-good change in any good enough Bernoulli hypergraphing. So, let $\s H$ be a Bernoulli hypergraphing. Pick some small enough $\delta$ (to be determined later) and $\Delta>1/\delta$ large enough that $(1-\frac{\epsilon}{\Delta})^\Delta=e^{-\epsilon}\bigl(1+O(\delta)\bigr)$ for all $\epsilon\in[0,1]$. Suppose the local geometry $\mu_0$ satisfies ($*$). Recall, we need to define a isomorphism equivariant family of random labellings, one for each $[H,o]\in \supp(\mu_0)$.
    
    Define a random change $\bf{c}$ (bold to emphasize randomness) as follows: for every $(H,v)$, put each edge in $H$ into $\bf{c}$ independently with probability $\dfrac{\epsilon}{\Delta}$. $H^+(\bf c)$ satisfies the small codegree (with tolerance $\delta e^{\epsilon(u-1)}$) and bounded degree (with the same degree bound) conditions automatically from the same bounds in $H$. We can bound $\mu(V^+(\bf{c}))$, $\bb P(v\mbox{ is covered by }M(\bf{c}))$, and $\bb P\bigl( |\deg^+_\bf{c}(v)-e^{-\epsilon u}\Delta|>\delta'\Delta \bigr)$ by finding the best and worst case for these probabilities over $[H,v]\in \supp(\mu_0)$. Abuse notation slightly and write $H$ also for the set of unordered edges in $H$.

    First, let's compute the probability a vertex isn't deleted. A vertex is deleted exactly when it's contained in some edge in $\bf{c}$. For a vertex $v$, write $E(v)$ for the set of edges containing $v$. For $[H, v]\in \supp(\mu_0)$, $|E(v)|\in [\Delta(1-\delta), \Delta(1+\delta)]$, so

    \begin{align*}
        \bb P_{\ell} \bigl(v\in V^+(\bf{c})\bigr)&= \bb P_\ell (\bigwedge_{e\in E(v)} \;e\not\in \bf{c})\\
        & =  \prod_{v\in e} \bb P_{\ell}(e\not\in \bf{c}) \\
        & = (1-\frac{\epsilon}{\Delta})^{|E(v)|}\\
        & = (1-\frac{\epsilon}{\Delta})^{\Delta(1+O(\delta))} \\
        & = e^{-\epsilon (1+O(\delta))}=e^{-\epsilon}\bigl(1+O(\delta)\bigr).
    \end{align*}

    %probability vertex is covered...

    Now we'll bound the probability a vertex in $\s B(\mu_0)$ has degree far from $\Delta e^{-\epsilon (u-1)}.$ Again, it suffices to bound this probability for each $[H,v]\in\supp(\mu_0)$, which we can do with a second moment argument.
    
    Suppose that $(H,v)\in \supp(\mu_0)$. The probability that an edge $e\in H$ remains in $H^+(\bf{c})$ is just the probability that no edge it touches is in $\bf{c}$, and the number of edges $e$ touches should be about $u\Delta$. More formally define $N(e)$ to be the set of edges $e$ intersects:
    \[ N(e):=\{f\in H: f\cap e\not=\emptyset\}.\] For any $e$, 
    \[|N(e)|=\sum_{v\in e}\deg(v) -O(\sum_{u,v\in e} \cod(u,v))=u\Delta(1+O(\delta)).\] The edge $e$ remains in $H^+(\bf{c})$ when $N(e)\cap \bf{c}=\emptyset$, so
    \begin{align*}
        \bb P_\ell(e\in H^+(\bf{c})) & = \bb P_{\ell}(\bigwedge_{f\in N(e)} \;f\not\in \bf{c})\\
            & = \bigl(1-\frac{\epsilon}{\Delta}\bigr)^{\Delta u(1+O(\delta))}\\
            &= e^{-\epsilon u}\bigl(1+O(\delta)\bigr).
    \end{align*}
    Since $e\in H^+(\bf{c})$ implies $v\in V^+(\bf{c})$ whenever $v\in e$, the expected degree of $v$ conditioned on $v\in V^+(\bf{c})$ is

%compute...
    \begin{align*}
        \bb E_{\ell}\bigl( \deg^+_\bf{c}(v) \;| \;v\in V^+(e)\bigr)& =\sum_{e\in E(v)} \bb P\bigl(e\in H^+(\bf{c})\; | \; v\in V^+(\bf{c})\bigr) \\
        & = \sum_{e\in E(v)} \frac{\bb P\bigl(e\in H^+(\bf{c})\bigr)}{\bb P\bigl(v\in V^+(\bf{c})\bigr)}\\
        & = \Delta e^{-u \epsilon-\epsilon}\bigl(1+O(\delta)\bigr)=\Delta e^{-\epsilon(u-1)}\bigl(1+O(\delta)\bigr). 
    \end{align*}
    Now we'll compute the variance of the expected degree. As usual, we can write $\deg^+_\bf{c}(v)$ as a sum of indicator functions:
    \[\deg^+_\bf{c}(v)=\sum_{e\in E(v)} \bf 1_{H^+(\bf{c})}(e).\] For the sake of readability, write $\var^v$ $\cov^v$, and $\bb E^v$ for the variance, covariance, and expectation of random variables conditioned on $v\in V^+(\bf{c})$, and write $\bf 1^+_H$ for $\bf 1_{H^+(\bf{c})}$. We then have
    \[\var^v\bigl(\deg^+_\bf{c}(v)\bigr)\leq \bb E^v\bigl(\deg^+_\bf{c}(v)\bigr)+\sum_{e\not=f\in E(v)} \cov^v\bigl(\bf 1^+_H(e), \bf 1+_H(f)\bigr). \] For any $e,f$, $|N(e)\cap N(f)|\leq \sum_{v\in e}\sum_{u\in f} \cod(v,u)\leq u^2 \delta\Delta$. So, the covariance is
    
    \begin{align*}\cov^v(\bf 1^+_H(e), \bf 1^+_H(f)) & = \bb E^v(\bf 1^+_H(e) \bf 1^+_H(f))-\bb E^v(\bf 1^+_H(e))\bb P(\bf 1^+_H(f))\\
    & = \bigl(1-\frac{\epsilon}{D}\bigr)^{|N(e)\cup N(f)\sm E(v)|}- (1-\frac{\epsilon}{D})^{|N(e)|+|N(f)|-2|E(v)|}\\
    & \leq \bigl(1+O(\delta)\bigr)e^{-2(u-1)\epsilon}\bigl((1-\frac{\epsilon}{\Delta})^{-|N(e)\cap N(f)|}-1 \bigr)\\
    & \leq \bigl(1+O(\delta)\bigr)e^{-2(u-1)\epsilon} (e^{-u^2 \epsilon\delta}-1)\\
    & \leq O(\delta).
    \end{align*} So,
    \[\var^v\bigl(\deg^+_\bf{c}(v)\bigr) \leq e^{-\epsilon(u-1)}\Delta+\Delta^2 O(\delta)\]
    Now, by Chebyshev's inequality (and the fact we chose $\Delta>1/\delta)$, 
    \begin{align*}
        \bb P^v\bigl(|\deg^+_\bf{c}(v)-e^{-\epsilon (u-1)}\Delta|>\delta' e^{-\epsilon(u-1)}\Delta\bigr) & \leq \frac{\var^v\bigl(\deg^+_\bf{c}(v)\bigr)\bigl(1+O(\delta)\bigr)}{\bigl(\delta' e^{-(u-1)\epsilon}\Delta\bigr)^2} \\
        & \leq \bigl(1+O(\delta)\bigr)\frac{O(\Delta+\Delta^2\delta)}{O(\delta'\Delta^2)}\\
        & \leq \bigl(1+O(\delta)\bigr)O\bigl(\frac{\delta}{\delta'}\bigr).
    \end{align*}
    
    Finally, we want to compute the probability a vertex is covered given that it's deleted. Since every covered vertex is deleted, it's enough to compute the probability that it's covered. The events that different edges cover $v$ are disjoint, so by the same estimate of $N(e)$ as above,

    \begin{align*}
        \bb P(v\mbox{ is covered by }M(\bf{c})) & = \bb P(\bigvee_{e\in E(v)} N(e)\cap \bf{c}=\{e\}) \\
        & = \sum_{e\in E(v)} \frac{\epsilon}{\Delta} (1-\frac{\epsilon}{\Delta})^{|N(e)|-1}\\
        & = \epsilon e^{\epsilon u}(1+O(\delta))
    \end{align*}

So, the probability a vertex is covered given that it's deleted is 

\[(1+O(\delta)) \frac{\epsilon e^{-\epsilon u}}{1-\epsilon^{-\epsilon}}=(1+O(\delta))(1+O(\epsilon))\]

Finally, we can pick $\delta$-small and $\Delta$ big enough so that all of these $O(\delta)$'s are less than $(\delta')^2$.

    %what about codegrees....
    
\end{proof}

Now we can string these together to get a matching that covers most vertices by recursively finding a small enough starting tolerance to get our desired end tolerance.

\begin{thm}
    Fix $\epsilon_0>0$. There is a tolerance $\delta$ so that for all large enough degrees $D$, if $\s H$ is an atomless pmp hypergraph which is $\delta$-good for $D$ then $\s H$ has a matching that covers a set of vertices of measure $1-\epsilon$.
\end{thm}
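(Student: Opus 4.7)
The strategy is to iterate the preceding nibble lemma. Fix a step size $\epsilon > 0$ small relative to $\epsilon_0$ and set $N := \lceil \log(4/\epsilon_0)/\epsilon \rceil$, so $(e^{-\epsilon})^N \leq \epsilon_0/4$. Working backwards, choose a tolerance schedule $\delta_0 > \delta_1 > \cdots > \delta_N$ with $\sum_i \delta_i$ small: take $\delta_N$ very small, and for each $i < N$ let $\delta_i$ be the starting tolerance provided by the previous lemma for target tolerance $\delta_{i+1}$ at operating degree $\Delta_i := D \cdot e^{-i\epsilon(u-1)}$. Set the theorem's $\delta := \delta_0$. The lemma requires each $\Delta_i$ to exceed a threshold depending on $\delta_{i+1}$; we absorb all these constraints by requiring $D$ large enough, the binding constraint being at $i = N-1$ where the operating degree is smallest.

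Now iterate, starting from $\s H_0 := \s H$, which is $\delta_0$-good for $\Delta_0 = D$. At step $i$, the lemma produces a change $C_i$ so that $\s H_{i+1} := \s H_i^+(C_i)$, with the renormalized measure on $V_{i+1} := V^+(C_i)$, is $\delta_{i+1}$-good for $\Delta_{i+1}$; moreover $\mu(V_{i+1})/\mu(V_i) = e^{-\epsilon}(1+O(\delta_{i+1}))$, and the conditional probability that a vertex in $V_i \setminus V_{i+1}$ lies in some edge of $M(C_i)$ is $\rho_i := e^{-\epsilon u}(1+O(\epsilon))(1+O(\delta_{i+1}))$. Since $M(C_{i+1})$ uses only vertices in $V^+(C_i)$, the edge sets $M(C_0), \ldots, M(C_{N-1})$ are pairwise vertex-disjoint, so $M := \bigcup_i M(C_i)$ is a matching in $\s H$.

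Writing $U := V \setminus \bigcup M$ for the uncovered vertices, $U$ decomposes as $V_N$ together with the ``wasted'' sets $W_i := (V_i \setminus V_{i+1}) \setminus \bigcup M(C_i)$. For the first piece, $\mu(V_N) = e^{-N\epsilon} \prod_i (1 + O(\delta_i)) \leq \epsilon_0/2$ by the choice of $N$ and $\sum_i \delta_i$ small enough. For the second, $\mu(W_i) = \mu(V_i)\bigl(1 - e^{-\epsilon}(1+O(\delta_{i+1}))\bigr)(1-\rho_i) = O(\epsilon + \delta_i) \cdot \mu(V_i)(1-e^{-\epsilon})$; summing, $\sum_i \mu(W_i) = O(\epsilon + \delta_0)$, because the telescoping weights $\mu(V_i)(1-e^{-\epsilon})$ sum to at most $1$ modulo the $O(\delta_i)$ corrections. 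Choosing $\epsilon$ and $\delta_0$ small enough makes this at most $\epsilon_0/2$, yielding $\mu(\bigcup M) \geq 1 - \epsilon_0$.

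The main obstacle is coordinating the tolerance and degree schedules through $N$ iterations: the lemma demands that the input tolerance $\delta_i$ be substantially smaller than the output tolerance $\delta_{i+1}$ (and the required smallness depends on the operating degree), so the schedule must be built backwards from $\delta_N$, and $D$ must be large enough to sustain the entire chain down to the finest tolerance. All the per-step estimates live in the multiplicative factors $1+O(\delta_i)$ and $1+O(\epsilon)$, so ensuring they do not compound catastrophically over $N$ steps is purely a matter of the geometric decay of $\delta_i$ and the bound $N\epsilon = O(\log(1/\epsilon_0))$.
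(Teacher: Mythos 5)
Your proposal is correct and takes essentially the same route as the paper's proof: build the tolerance schedule backwards from the lemma (with the degree thresholds absorbed into ``$D$ large enough''), iterate the nibble roughly $\log(1/\epsilon_0)/\epsilon$ times, and bound the uncovered measure by the leftover set $V_N$ plus the per-step wasted fraction $O(\epsilon+\delta)$ of deleted vertices. One cosmetic slip: as you construct it, the schedule runs $\delta_0<\delta_1<\cdots<\delta_N$ (the input tolerance is the finest, degrading along the run), not $\delta_0>\cdots>\delta_N$ as written, and the starting tolerances should be taken to depend only on the target tolerances (not on $D$) so that the theorem's quantifier order $\delta$-before-$D$ is respected --- which is exactly how you in fact use the lemma.
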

\begin{proof}
    Fix a target $\epsilon_0$. Choose $\delta$ and $\epsilon$ very small (we'll see how small exactly in a moment). Define $\delta_i$ and $\Delta_i$ by induction as follows:
    \begin{itemize}
        \item $\delta_0=\delta$ and $\Delta_0=1$
        \item Given $\delta_i$ and $\Delta_i$, pick any $\delta_{i+1}$ small enough and $\Delta_{i+1}$ large enough that the pevious lemma applies, i.e.~whenever $\s H$ is $\delta_{i+1}$-good enough for $\Delta_{i+1}$ there is $C$ so that $\s H^+(C)$ is $\delta_i$-good for $e^{-(u-1)\epsilon}\Delta_{i+1}$.
    \end{itemize}
     Let $t$ be large enough that $e^{-\epsilon t}<\epsilon_0$. Let $D$ be large enough that $e^{-\epsilon(u-1)i} D>\Delta_{t-i}$ for each $i$.

    Let $\s H$ be $\delta_t$-good for $D$. By the lemma we have changes $C_0,...,C_t$ so that, if $\s H_i$ is defined inductively $\s H_0=\s H$ and $\s H_{i+1}=\s H_i^+(C_i)$, then
    \[\mu(V_t)\leq e^{-\epsilon t}(1+\delta)^t\]
    \[\mu(\pi M)\geq e^{-u\epsilon}(1-O(\delta))(1-O(\epsilon))\mu(V\sm V_t) \]

    We can choose $\epsilon$ small enough that $e^{-u\epsilon}(1-O(\epsilon))>1-\epsilon_0$. Then choose $t$ large enough that $e^{-\epsilon t}<\epsilon_0$. Then, choose $\delta$ so that $(1+\delta)^t<1+\epsilon_0$. So, $M$ covers $(1-\epsilon_0)-\epsilon_0(1+\epsilon_0)$ many vertices. This can be made as close to $1$ as we like. 
\end{proof}

And, we can recover the finitary theorem:
\begin{cor}
For every $\epsilon>0$, there is some tolerance $\epsilon$, size $n$, and degree $\Delta$ so that every hypergraph $H$ with $n$ vertices which is $\delta$-good for $\Delta$ admits a matching that covers $(1-\epsilon)n$ vertices.
\end{cor}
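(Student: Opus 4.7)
The plan is a routine compactness-and-transfer argument: the measurable theorem proved just above, combined with compactness of the space of pmp hypergraphs of bounded degree, forces the finitary statement. Fix a target $\epsilon_0>0$ and let $\delta_*>0$ and $\Delta_*$ be the tolerance and degree threshold supplied by the measurable theorem for target coverage $1-\epsilon_0/2$. Fix any $\Delta\geq \Delta_*$ and any uniform degree bound $\lambda$ (as in item (3) of the main Frankl--R\"odl theorem). The claim to prove is that, for this $\Delta$ and $\lambda$, there exist $\delta>0$ and $n$ so that every finite $u$-uniform hypergraph $H$ on at least $n$ vertices with maximum degree at most $\lambda$ which is $\delta$-good for $\Delta$ carries a matching covering a $(1-\epsilon_0)$-fraction of vertices.

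Suppose the claim fails. Then there is a sequence $H_m$ of finite $u$-uniform hypergraphs of maximum degree at most $\lambda$, each $1/m$-good for $\Delta$, with $|V(H_m)|\geq m$, such that no matching in $H_m$ covers more than a $(1-\epsilon_0)$-fraction of its vertices. View each $H_m$ as a pmp hypergraph with normalized counting measure. By the compactness corollary proved earlier, some subsequence local-global converges to a pmp hypergraph $\s H$ of degree at most $\lambda$. Since $|V(H_m)|\to\infty$, $\s H$ is atomless. The conditions defining $\delta$-goodness are continuous in the radius-$2$ local statistics (they count neighborhoods of prescribed degree and codegree profiles), so $\s H$ is $\delta_*$-good for $\Delta$.

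Apply the measurable theorem: $\s H$ has a measurable matching $M$ covering measure at least $1-\epsilon_0/2$. Encode $M$ as a $\{0,1\}$-edge labelling $f$; the matching property ``no vertex meets two selected edges'' and the coverage ``$v$ lies in a selected edge'' are both radius-$1$ local conditions, so $\mu^1(f,\s H)\in \s L_{1,2}(\s H)$. By local-global convergence, for each sufficiently large $m$ there is an edge labelling $f_m$ of $H_m$ whose radius-$1$ local statistics lie within $\epsilon_0/(4u)$ of those of $M$ in total variation. Consequently $f_m^{-1}(1)$ covers at least a $(1-3\epsilon_0/4)$-fraction of vertices of $H_m$, while at most an $\epsilon_0/(4u)$-fraction of vertices witness two selected edges sharing them. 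Deleting one selected edge at each such conflicting vertex yields an honest matching in $H_m$; each deletion removes at most $u$ vertices from the coverage, so the resulting matching still covers more than $(1-\epsilon_0)$-fraction of $V(H_m)$, contradicting the choice of $H_m$.

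The main obstacle, such as it is, is the bookkeeping in the last paragraph: one must convert the ``almost matching'' produced by local-global approximation into a genuine matching while losing only a controlled fraction of the coverage. The standard trick of deleting one conflicting edge per violating vertex suffices, but the various small parameters (the $\epsilon_0/2$ budgeted in the measurable theorem, the $\epsilon_0/4$ allowed for violating vertices and their surrounding deletions, and the total-variation slack between statistics of $\s H$ and of $H_m$) must be chosen so they sum to less than $\epsilon_0$. This is routine arithmetic, and is the only place where any finite combinatorics enters; the rest is pure transfer from the measurable setting.
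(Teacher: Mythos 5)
Your argument is correct and is essentially the paper's own proof: the paper runs the same compactness contradiction, just phrased in contrapositive (a subsequential local-global limit of counterexamples would be an atomless, good pmp hypergraph with no matching covering measure $1-\epsilon$, contradicting the measurable theorem), with the transfer-back bookkeeping you spell out left implicit. One small repair in that bookkeeping: deleting one selected edge at each conflicting vertex need not yield a matching (a vertex can lie in three or more selected edges); instead delete every selected edge meeting a conflicting vertex and take the total-variation slack of order $\epsilon_0/(u\lambda)$, which is harmless since $u$ and the degree bound $\lambda$ are fixed before $m$.
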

\begin{proof}
    Otherwise, for every $\delta$ and $\Delta$ we can find arbitrarily large $\s H$ satisfying $(1-3)$ with no such matching. But then, taking a local-global limit along a subsequence gives an atomless pmp hypergraph with no matching covering a set of vertices of measure $(1-\epsilon)$. But this contradicts the previous theorem.
\end{proof}

\subsection{Frankl--R\"odl again}

In this section we'll give a second proof of the Frankl--R\"odl theorem in the special case of linear hypergraphs with few short cycles. (In fact, we'll get a lower bound on the size of a matching for such hypergraphs in a fixed degree). The point of this argument is to illustrate how the differential equation method adapts to the measurable setting.

Our argument will essentially be the same as above: randomly move edges into a matching with probability $\epsilon/\bb E(\deg)$ and delete the vertices in edges we move.
This time, we'll track the number of edges and vertices remaining by a differential equation.

It is illuminating to compare the argument below with standard differential equation arguments, such as those in Bennett and Dudek's survey, \cite{DiffEqsSurvey}. Our method only works in the bounded degree regime, and our results are comparatively soft. But, the analysis involved in our argument is also comparatively soft. We don't need any concentration estimates or any analytical tools more complicated than Euler approximation. And various geometric considerations are simplified in the measurable setting: our process is independent on different branches of the hypergraph in each step, if we were to edge-color rather than find a matching our process would be symmetric under permuting the colors, etc. (Similar facts are only approximately true in the finite setting).

\begin{dfn}
    For $u,d\in \N$, let $q_{u,d}$ be solutions to the system of differential equations
    \[q(0)=1\quad q'=-\frac{1}d -(u-1-\frac{u}{d})q.\]
    Let $t_{u,d}$ be the solution to $q(t_{u,d})=0$.
\end{dfn}

Note that this equation is separable and has solution
\[q_{u,d}(t)=\frac{ud-d}{ud-d-1} e^{-(u-1-\frac{1}{d})t}-\frac{1}{ud-d-1}.\] So, \[t_{u,d}=\frac{\log(ud-d)}{u-1-\frac{1}{d}}.\] In particular, for fixed $u$, $t_{u,d}$ goes to infinity with $d$.

\begin{thm}
    If $\s H$ is $u$-uniform $d$-regular acyclic pmp hypergraph then, for any $\epsilon>0$, $\s H$ has a matching covering a set of vertices of measure $1-e^{-t_{u,d}}-\epsilon$.
\end{thm}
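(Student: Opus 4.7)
The plan is to adapt the measurable nibble from the previous subsection to a continuous-time limit, where the change in local structure of the hypergraph is described exactly by the differential equation in the statement. By the hypergraph Ab\'ert--Weiss theorem, $\s H$ local-global contains the Bernoulli hypergraphing $\s B(\mu_0)$ over its local geometry $\mu_0$, and since $\s H$ is $d$-regular and acyclic, $\mu_0$ is the point mass at the rooted $u$-uniform $d$-regular tree. Matching density is a local-global parameter (code a matching by vertex labels indicating which incident edge covers the vertex), so it is enough to construct a measurable matching in $\s B(\mu_0)$ covering a set of measure $1 - e^{-t_{u,d}}$ and transfer it back to $\s H$, absorbing the approximation into the $\epsilon$-slack.

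In $\s B(\mu_0)$, use the iid $[0,1]$-labels at the vertices to produce, as a factor of iid, an independent exponential clock $\tau_e$ on each edge at a rate chosen so that alive vertices are covered at unit rate. Run the continuous-time greedy matching: processing edges in order of $\tau_e$, add $e$ to $M$ iff at time $\tau_e$ none of its vertices are already covered by $M$. Let $\alpha(t)$ denote the measure of vertices still uncovered at time $t$, and let $q(t)$ denote the conditional expected number of \emph{available} edges through a surviving vertex $v$---meaning the edge and all $u$ of its endpoints are still alive---normalized so that $q(0) = 1$.

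The core step is deriving the ODE $q' = -\tfrac{1}{d} - (u - 1 - \tfrac{u}{d}) q$. Acyclicity is essential: conditional on $v$ surviving to time $t$, the $d$ principal subtrees hanging off $v$ through its incident edges are iid, so it suffices to compute how a single edge $e \ni v$ evolves. There are two loss channels for $e$: either $e$'s own clock rings (contributing the constant $-1/d$ after the normalization by $d$), or some other edge incident to one of the $u - 1$ further endpoints of $e$ is taken first (contributing the $(u - 1 - u/d) q$ term). The $u/d$ correction is the residue of Bayes' rule when we condition on the survival of $v$, which distorts the law of the clocks on the edges through $v$ relative to their unconditioned exponential distributions.

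Once the ODE is in hand, it is separable, yields the closed form for $q$ stated just before the theorem, and vanishes exactly at $t = t_{u,d}$. Under the chosen time scaling, $\alpha(t) = e^{-t}$, so $\alpha(t_{u,d}) = e^{-t_{u,d}}$, and transferring back through Ab\'ert--Weiss gives the theorem. The main obstacle is the derivation of the ODE itself: the iid subtree structure from acyclicity reduces the computation to a single subtree, but conditioning on $v$'s survival is delicate, and obtaining the exact coefficients---in particular the $u/d$ Bayes correction to the linear term---requires careful bookkeeping. Everything else (solving a first-order separable equation and pushing the bound through local-global containment) is routine.
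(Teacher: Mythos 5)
Your overall architecture is the paper's: reduce via the hypergraph Ab\'ert--Weiss theorem to the Bernoulli hypergraphing over the Dirac mass at the $u$-uniform $d$-regular tree, run a random greedy/nibble matching as a factor of iid, track the conditional availability probability $q$ and the vertex survival probability, and read off the coverage $1-e^{-t_{u,d}}$ from the ODE. The transfer step (coding a matching by local labels and absorbing the local-global approximation into the $\epsilon$) is fine. But the construction and the core computation, as you describe them, have a genuine gap. First, you cannot assign a \emph{fixed-rate} exponential clock to each edge ``so that alive vertices are covered at unit rate'': with constant clock rates the covering rate of an alive vertex is proportional to the number of still-available edges through it, which decays in time (this is exactly what $q$ measures), so in that parametrization $\alpha(t)\neq e^{-t}$ and the stated ODE does not hold. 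To keep the vertex death rate equal to $1$ you need a time-varying per-edge intensity of order $1/(d\,q(t))$, which depends on the unknown $q$ --- the definition is circular unless you either first solve the ODE and then verify self-consistency, or run the process in a neutral parametrization (e.g.\ iid uniform clocks, i.e.\ a uniformly random edge order) and perform a change of time variable afterwards. The paper sidesteps this by working in discrete nibbles: at step $i$ each surviving edge is selected with probability $\epsilon/(dQ(i))$, where $Q(i)$ is defined from the already-constructed process, and the ODE only appears at the end as the limit of explicit difference equations via Euler approximation.

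Second, the step you defer as ``careful bookkeeping'' is the heart of the proof, and your sketched accounting of it does not produce the right coefficient. In the correct derivation one tracks $P(i)=\bb P(e\mbox{ alive})$ and $V(i)=\bb P(v\mbox{ alive})$ separately and divides: $e$ dies when it or any of the $u(d-1)$ edges meeting it is selected, giving (per unit time, in the scaled parametrization) $P'/P=-\tfrac{1}{dq}-\tfrac{u(d-1)}{d}$, while $V'/V=-1$; hence $q'=P'/V-q\,V'/V=-\tfrac1d-\bigl(\tfrac{u(d-1)}{d}-1\bigr)q=-\tfrac1d-(u-1-\tfrac{u}{d})q$. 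In particular the linear term comes from \emph{all} $u(d-1)$ edges meeting $e$ (including the $d-1$ other edges at $v$) corrected by the full conditioning term $+q$ from $v$'s unit death rate; your attribution --- only the $(u-1)(d-1)$ edges at the far endpoints of $e$, with a ``$u/d$ Bayes residue'' --- gives $(u-1)-\tfrac{u-1}{d}$ unless further corrections are tracked, which is off by $\tfrac1d$ and would change $t_{u,d}$. Since the exact coefficients are the entire quantitative content of the theorem, the proposal as written does not yet constitute a proof; to complete it you should either carry out the conditional rate computation exactly as above (exploiting acyclicity for the independence of branches, as you indicate), or adopt the paper's discrete-step formulation where these identities are verified as difference equations with $O(\epsilon^2)$ errors and then passed to the limit.
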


This measure works out to (the hideous expression) \[1-\left(\frac{1}{(u-1)d}\right)^{u-1-\frac{1}{d}}.\] Note that this tends to $1$ with $d$.
\begin{proof}
By the hypergraph Ab\'ert--Weiss theorem, it suffices to prove this when $\s H$ is the Bernoulli hypergraphing over the (Dirac mass at the) $u$-uniform $d$-regular acyclic hypergraph, $H_{u,d}$. For ease of notation, let's fix $u,d$ and set $H=H_{u,d}$, $q=q_{u,d}$, and $t_0=t_{u,d}$. Fix any vertex $v$ of $H$ and edge $e$ of $H$ which contains $v$. We want to build an automorphism fiid edge labelling of $H$, $\bf f$, so that
\[\bb P\bigl(\bigvee_{v\in f\in H} \bf f(f)=1\bigr)=1-e^{-t_0},\] or close to it anyways. (The choice of $v$ and $e$ doesn't matter since the automorphism group of $H$ acts transitively). 

We'll first define a sequence of fiid labellings, $\ip{\bf f_i:i\in\N}$, by induction. Roughly speaking, in step $i$ we'll label each remaining edge $1$ independently with probability $\epsilon/dq(\epsilon i)$, then label the vertices in these edges $0$, then label the edges containing vertices labelled $0$ with $0$. So, after each step, no unlabelled edge contains a labelled vertex and no unlabelled vertex is contained in an edge labelled $1$. There may be some intersecting edges labelled $1$, but only with negligible density.

We'll show that, for $\epsilon$ small enough, 
\[Q(i):=\bb P(e\mbox{ is unlabelled at step }i\;|\; v\mbox{ is unlabelled at step }i)\]
\[V(i):=\bb P(e\mbox{ is unlabelled at step }i), \mbox{ and }\]
\[C(i):=\bb P\bigl(\bigvee_{v\in f\in H} \bf f_i(e)=1\bigr)\] stay close to $q(\epsilon i), e^{-\epsilon i}$, and $1-e^{-\epsilon i}$ respectively. 

Formally, fix $\epsilon>0$ and define $\ip{\bf f_i:i\in\N}$, induction as follows:
\[\bf f_0=\emptyset\mbox{ with probability }1\]
as above $Q(i)=\bb P\bigl(e\not\in \dom(\bf f_i)| v\not\in \dom(\bf f_i)\bigr).$ If $Q(i)<\epsilon$, we end the construction. Otherwise, Let $\bf A'_i\subseteq H$ be a random set of edges so that, for each $f$, $f\in \bf A'_i$ iid with probability $\epsilon/dQ(i)$ . Let $\bf A_i=\bf A_i'\sm \dom(\bf f_i)$, and define $\bf f_{i+1}$ on vertices by
\[\bf f_{i+1}(v)=\left\{\begin{array}{ll} 0 & (\exists f\in \bf A_i)\; v\in f \\ \bf f_i(v) & \mbox{ else}\end{array}\right.\] And, define $\bf f_{i+1}$ on edges by
\[\bf f_{i+1}(f)= \left\{\begin{array}{ll} 0 & f\not\in \dom(\bf f_i) \;\&\;(\exists f'\in \bf A_i)\;f'\cap f\not=\emptyset\;\&\; f'\not=f \\ \bf f_{i}(f) &  f\in \dom(\bf f_i)\vee f\not\in \bf A_i \\ 1 & \mbox{otherwise}  \end{array}\right.\]

Define $Q(i)$, $V(i)$, and $C(i)$ as above. Note that $Q, V,$ and $C$ are not random variables. Let $\bf\deg_i(v)$ be the degree of $v$ in the unlabelled graph at step $i$. That is,
\[\bf\deg_i(v)=\sum_{v\in f} \bf 1_{f\not\in \dom(\bf f_i)}.\] So, we can compute expected degree in a few ways:
\begin{align*}
    \bb E(\bf \deg_i(v)\;|\;v\not\in\dom(\bf f_i))&=\sum_{m=0}^d m\bb P(\bf \deg_i(v)=m| v\not\in\dom(\bf f_i)) \\ 
    &=\sum_{e\in f} \bb P(\bf 1_{f\not\in \dom(\bf f_i)}| v\not\in\dom(\bf f_i))=d Q(i).
\end{align*}

Now we'll derive difference equations for $Q, V,$ and $C$ and observe that they are essentially the defining equations for Euler approximations to $q, e^{-t}$, and $1-e^{-t}$. Note that, for any finite radius the probability there are two edges in $\bf A_i$ within this radius is $O(\epsilon^2/Q(i))$ (where the hidden constant only depends on the radius). This probability will be negligible for most of the run time of the algorithm. 

First, let's do $V$: %error set, etc...
\begin{align*} V(i+1)& = V(i)-\sum_{v\in f}\bb P(f\in \bf A_i\;|\;v\not\in \dom(\bf f_i))V(i)+O(\epsilon^2)\\
& = V(i)-\sum_{v\in f} \frac{\epsilon }{dQ(i)}\bb P(f\not\in \dom(\bf f_i)\;|\; v\not\in\dom(\bf f_i)) V(i)+O(\epsilon^2)\\
& = V(i)-\frac{dQ(i)}{dQ(i)}\epsilon V(i)+O(\epsilon^2) \\
& = V(i)-\epsilon V(i)+O(\epsilon^2)\end{align*} %break down a bit more

For $C$, let $E$ be the event that there are two edges within radius $3$ of $v$ which are both in $\bf A_i$. Then, for some $E', E''\subseteq E$,
\begin{align*}
    C(i+1)&= C(i)+V(i)\bb P\bigl(\bigvee_{v\in f} f\in \bf A_i\;|\; v\not\in \dom(\bf f_i)\bigr)+\bb P(E') \\
    &= C(i)+V(i)\sum_{v\in f} \bb P\bigl(f\in \bf A_i'\bigr)\bb P\bigl(f\not\in \dom(\bf f_i)\;|\; v\not\in \dom(\bf f_i)\bigr)+\bb P(E'') \\
    & = C(i)+V(i)\sum_{v\in f} \frac{dQ(i)\epsilon}{Q(i)d} +O(\epsilon^2) \\
    & = C(i)+\epsilon V(i) +O(\epsilon^2).
\end{align*}

For $Q$, let's introduce an auxiliary function, $P(i)=\bb P(e\not\in \dom(\bf f_i))$. If $v$ is labelled so is $e$. So we have
\[Q(i)=\frac{\bb P\bigl(e\not\in \dom(\bf f_i)\;\&\; v\not\in \dom(\bf f_i)\bigr)}{V(i)}=\frac{P(i)}{V(i)}.\] Similar to the above, we have
\[P(i+1)=P(i)-\frac{\epsilon}{dQ(i)} P(i)-\frac{\epsilon u (d-1)}{d}P(i)+O(\epsilon^2).\] By the binomial theorem,
\[P(i+1)=P(i)\bigl(1-\frac{\epsilon}{dQ(i)}\bigr)\bigl(1-\frac{\epsilon}{d}\bigr)^{u(d-1)}+O(\epsilon^2)\]
\[V(i+1)=V(i)\bigl(1-\frac{\epsilon}{d}\bigr)^{d} +O(\epsilon^2).\] So, as long as we're in a regime where $V(i)$ stays bounded above $\epsilon$, dividing these equations gives
\begin{align*} Q(i+1)&=\frac{P(i+1)}{V(i+1)} \\
                    & =Q(i)\bigl(1-\frac{\epsilon}{dQ(i)}\bigr)\bigl(1-\frac{1}{d}\epsilon\bigr)^{ud-u-d} +O(\epsilon^2) \\
                    & =Q(i)-\frac{\epsilon}{d}-\frac{ud-u-d}{d}\epsilon Q(i)+O(\epsilon^2).\end{align*}

And, by definition we have $Q(0)=V(0)=1$ and $C(0)=0$. These are are, up to rescaling time and an $O(\epsilon^2)$ error, the equations defining the Euler approximations to 
\[v'=-v\quad c'=v\quad q'=-\frac{1}{d}-(u-1-\frac{u}{d})q\]
\[q(0)=v(0)=1,\quad c(0)=0.\] Let $q,v,c$ solve these equations. These are all Lipshitz functions, so an exercise in real analysis shows that this $O(\epsilon^2)$ error doesn't throw off the convergence of these approximations. See \cite{coloringtrees} for an example of this worked out in detail. We then have that, for any $\delta$ and any $t$ where $q(s)>0$ and $v(s)>0$ for all $s<t$, there is an $\epsilon$ so that for all $i<t/\epsilon$
\[|Q(i)-q(i\epsilon)|, |V(i)-v(i\epsilon)|, |C(i)-c(i\epsilon)|<\delta.\] In this case, we can solve these equations rather simply. We have that $v(t)=e^{-t}$, $q(t)$ is strictly decreasing and bounded above by $v(t)$, and $c(t)=1-e^{-t}$. So, if $t$ is any time strictly less than $t_0$, there is an $\epsilon$ and an $i$ with $|C(i)-c(t)|<\delta$. That is, there are matchings so that the probability a vertex is covered is arbitrarily close to $1-e^{t_0}$, as desired.

\end{proof}

We could improve this theorem in a couple ways. For instance, a more careful analysis of the proof reveals that we just need an edge transitive atomless pmp hypergraph with large enough girth. And, if we worked a little harder in the analysis we could replace transitivity with a concentration argument to control the degrees.

Finally, we can effortlessly recover finitary results about random regular hypergraphs via the usual kind of compactness argument.

\begin{cor}
    If $\s H_n$ is a random $u$-uniform $d$-regular hypergraph with at least $n$ vertices, then asymptotically almost surely, $\s H_i$ has a matching covering a $(1-e^{-t_{u,d}})$-fraction of its vertices.
\end{cor}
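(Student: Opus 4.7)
The plan is a standard compactness argument combining the preceding measurable matching theorem, Lemma \ref{lem:asymptoticallyacyclic}, and compactness of local-global limits. Fix any $\epsilon>0$ and suppose toward contradiction that there is $\delta>0$ and a subsequence $n_k\to\infty$ along which $\bb P(\s H_{n_k}\mbox{ has no matching covering a }(1-e^{-t_{u,d}}-\epsilon)\mbox{-fraction})\geq \delta$ for all $k$. By Lemma \ref{lem:asymptoticallyacyclic}, the girth of $\s H_{n_k}$ tends to infinity almost surely, so the intersection of the bad event with $\{\operatorname{girth}(\s H_{n_k})>k\}$ is nonempty for each large $k$. This lets us pick a deterministic sequence $\s G_k$ of $d$-regular $u$-uniform finite hypergraphs with $\operatorname{girth}(\s G_k)\to\infty$ and no matching covering a $(1-e^{-t_{u,d}}-\epsilon)$-fraction of vertices.

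Next I would pass to a local-global convergent subsequence, which exists by compactness of the space of local-global equivalence classes of degree-bounded pmp hypergraphs. Call the limit $\s H$. Since $d$-regularity is already captured by the local geometry (it is a zero-radius local statistic), $\s H$ is $d$-regular. Since $\operatorname{girth}(\s G_k)\to\infty$, the probability of seeing any particular cycle type in a ball of fixed radius around the root goes to zero along the sequence, so the local geometry of $\s H$ is the Dirac mass on the $d$-regular $u$-uniform acyclic rooted hypergraph. In particular, $\s H$ is an atomless $d$-regular $u$-uniform acyclic pmp hypergraph, so the preceding theorem applies.

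By that theorem, $\s H$ carries a measurable matching covering a set of vertices of measure at least $1-e^{-t_{u,d}}-\epsilon/2$. Encode such a matching as a $\{0,1\}$-labelling of the edges; the condition that labelled edges be pairwise disjoint together with the target coverage is a condition on the $(1,2)$-local statistics, i.e.~on a point $\nu\in\s L_{1,2}(\s H)$. Since $\s L_{1,2}(\s G_k)\to \s L_{1,2}(\s H)$ in the Hausdorff metric along our subsequence, we can find $\nu_k\in \s L_{1,2}(\s G_k)$ with $d_{TV}(\nu_k,\nu)\to 0$, and such $\nu_k$ are realized by actual $\{0,1\}$-edge-labellings of $\s G_k$.

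The final step is to convert these approximate-matching labellings into genuine matchings of nearly the same coverage: delete every labelled edge that shares a vertex with another labelled edge. The fraction of vertices adjacent to such a conflict is bounded by a multiple of the $\nu_k$-probability of seeing two labelled edges at a common vertex, which tends to $0$ with $d_{TV}(\nu_k,\nu)$. So for large $k$ we produce an honest matching in $\s G_k$ covering at least a $(1-e^{-t_{u,d}}-\epsilon)$-fraction of vertices, contradicting the choice of $\s G_k$. The only place that requires any thought is this cleanup step and the argument that girth tending to infinity forces the limit to be acyclic; both are standard local-global manipulations and should be the mildest obstacles in an otherwise routine compactness proof.
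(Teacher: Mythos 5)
Your argument is correct and is essentially the paper's own proof: the paper gives the same compactness argument in three lines (suppose not, pass to a convergent subsequence, use asymptotic acyclicity and the preceding theorem to get a large matching in the limit, contradiction), and your proposal simply fills in the routine details (choosing deterministic $\s G_k$, identifying the limit's local geometry, transferring the matching back through $\s L_{1,2}$ and cleaning up conflicts). No gap; the $\epsilon$-loss you carry is implicit in the paper's statement as well.
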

\begin{proof}
    Suppose not. Then there's a convergent subsequence with small matching ratio. The sequence is asymptotically acyclic, so the limit has a big matching. This is a contradiction.
\end{proof}

\section{Open questions}

We'll end with a handful of open questions. In \cite{LocalGlobalBackground}, it's shown that when $\Gamma$ has $T_d$ as its Cayley graph, there are no differences in local-statistics between $\aut(T_d)$- and $\Gamma$-fiid processes. It isn't clear to what extent this generalizes to hypergraphs.

\begin{prb}
    Does $H_{d,u}$ have an approximate $\aut(H_{d,u})$-fiid $d$-edge coloring?
\end{prb} That is, if $\mu_0$ is concentrated on $H_{d,u}$, is $\s B(\mu_0)$ local-global equivalent to a Schreier graph of $(\Z/u\Z)^{*d}$?

If this is false (which back-of-the-envelope entropy calculations suggest is the case), then it leaves open the extend to which $\aut(H_{u,d})$- and $\aut(\Gamma)$-fiid processes differ. For instance, the independence ratio of $\s B(\mu_0)$ is known asymptotically in $d$ for each $u$, but, we don't know the independence ratio of the corresponding Bernoulli shifts:

\begin{prb}
    What is the independence ratio of the Schreier graph of the Bernoulli shift of $(\Z/u\Z)^{*d}$?
\end{prb}

A major open problem for graphs is to characterize the local-global limits of finite graphs. We can ask questions in the same vein for hypergraphs. For instance:
\begin{prb}
    Is there a regular acyclic pmp hypergraph which is not a local-global limit of finite hypergraphs.
\end{prb}

Lastly, this paper only leveraged the convergence of $\Sigma_1$ properties of hypergraphs, but we have convergence of the entire first order theory. It remains open if there are any higher-complexity properties of hypergraphs in the literature which can be proven using the limit machinery.

 \bibliographystyle{plain}
\bibliography{refs}

\end{document}